\newtheorem{theorem}{Theorem}[section]
\newtheorem{lemma}[theorem]{Lemma}
\newtheorem{corollary}[theorem]{Corollary}
\newtheorem{proposition}[theorem]{Proposition}
\theoremstyle{definition}
\theoremstyle{remark}
\newtheorem{remark}[theorem]{Remark}
\numberwithin{equation}{section}
\DeclareMathOperator{\Vol}{Vol}
 \DeclareMathOperator{\leng}{length}
\begin{document}
\title{Continuity of  Extremal  Transitions  and Flops for Calabi-Yau Manifolds}
\author[X.Rong]{Xiaochun Rong * }
\thanks{*Supported partially  by NSF Grant DMS-0805928, and by research found from Capital Normal
University.  \\ Address: Mathematics Department, Capital Normal
University, Beijing 100048, P.R.China, and  Mathematics Department,
Rutgers University New Brunswick, NJ 08903, USA.  E-mail address:
rong@math.rutgers.edu}
\author[Y. Zhang]{Yuguang Zhang **}
\thanks{**Supported by  NSFC-10901111, and KM-210100028003.  \\ Address:   Mathematics Department, Capital Normal
University, Beijing 100048, P.R.China.   E-mail address:
 yuguangzhang76@yahoo.com}

\begin{abstract} In this paper, we study the behavior of Ricci-flat
K\"{a}hler metrics  on Calabi-Yau manifolds under algebraic
geometric surgeries: extremal transitions or  flops.
 We prove a version of Candelas and de la Ossa's conjecture:   Ricci-flat
 Calabi-Yau  manifolds related  by
extremal transitions and flops can be connected by a  path
consisting of   continuous  families  of Ricci-flat
 Calabi-Yau manifolds and a compact metric space  in the  Gromov-Hausdorff
 topology. In an essential step of the  proof of our main result,
 the convergence of Ricci-flat
K\"{a}hler metrics  on Calabi-Yau manifolds along a smoothing is
established, which can be of  independent interests.
\end{abstract}

\maketitle
 \markboth{Continuity of  Extremal  Transitions  and Flops for Calabi-Yau
 Manifolds}{Continuity of  Extremal  Transitions  and Flops for Calabi-Yau
 Manifolds}

\section{Introduction }
\vskip2mm

A Calabi-Yau manifold $M$ is a simply connected   projective
manifold with trivial canonical bundle $ \mathcal{K}_{M}\cong
\mathcal{O}_{M}$. In the 1970's, S.T.Yau  proved Calabi's conjecture
in  \cite{Ya1}, which says that, for
    any K\"{a}hler class $\alpha\in H^{1,1}(M, \mathbb{R})$,
there exists a
     unique Ricci-flat K\"{a}hler metric $g$ on $M$ with  K\"{a}hler form
     $\omega\in\alpha$. The study of Calabi-Yau manifolds became
     very interesting in the last three decades (cf.  \cite{Y4}).
     The convergence of Ricci-flat Calabi-Yau manifolds was studied
     from various perspectives  (cf. \cite{An1} \cite{Cha} \cite{C} \cite{CT}
     \cite{GW2}  \cite{Kob1}  \cite{Lu1}
      \cite{To} \cite{To2}   \cite{To3} \cite{RZ}   \cite{Wi} \cite{ZY0}).
     The  goal of the present paper is to study the metric
     behavior of Calabi-Yau manifolds   under
some algebraic geometric  surgeries.

   Let $M_{0}$ be a singular  projective normal
variety with singular set $S$. Usually there are two type of
desingularizations: one is a
 {\it resolution} $(\bar{M},\bar{\pi})$, i.e., $\bar{M}$ is a
 projective manifold, and $\bar{\pi}$ is a morphism such that $\bar{\pi}:
  \bar{M}\backslash \bar{\pi}^{-1}(S)\rightarrow M_{0} \backslash S$
  is bi-holomorphic.
The other is a {\it smoothing} $(\mathcal{M}, \pi)$ over the unit disc
$\Delta\subset \mathbb{C}$, i.e.,   $\mathcal{M}$ is an
$(n+1)$-dimensional variety, $\pi$ is a proper flat morphism,
$M_{0}=\pi^{-1}(0)$, and $M_{t}=\pi^{-1}(t)$ is a smooth projective
$n$-dimensional manifold for any $t\in \Delta\backslash \{0\}$.  If $M_{0}$
admits a resolution
 $ (\bar{M}, \bar{\pi})$ and  a smoothing   $(\mathcal{M}, \pi)$,    the process of
going from $\bar{M}$ to $M_{t}$, $t\neq 0$,  is called an  {\it
extremal transition}, denoted by $\bar{M}\rightarrow M_{0}
\rightsquigarrow M_{t}$. We call this process a {\it conifold
transition} if $M_{0}$ is a conifold, which
 is    a normal variety  $M_{0}$  with  only finite  ordinary double points as
singularities, i.e.,  any singular point is locally  given by
$$z_{0}^{2}+\cdots +z_{n}^{2}=0, \ \ {\rm where } \ \ \dim_{\mathbb{C}}
M_{0}=n.$$
 If $M_{0}$ admits two different
 resolutions $ (\bar{M}_{1}, \bar{\pi}_{1})$ and
 $(\bar{M}_{2}, \bar{\pi}_{2})$ with both exceptional subvarieties of codimension  at least  2, the process of
going from $\bar{M}_{1}$ to  $\bar{M}_{2}$ is called  a \emph{flop},
denoted by $\bar{M}_{1} \rightarrow M_{0} \dashrightarrow
\bar{M}_{2}$.

Extremal  transitions and flops  are algebraic geometric
 surgeries
  providing   ways to connect two  topologically distinct projective
manifolds, which is interesting  in  both mathematics and physics.
In the  minimal model program, all smooth   minimal models of
dimension 3 in a birational equivalence class are connected by a
sequence of flops (cf. \cite{Kol} \cite{KM}).  The famous Reid's
fantasy
 conjectures  that all Calabi-Yau threefolds are connected to
each other by   extremal transitions, possibly including non-K\"{a}hler
Calabi-Yau threefolds,   so as to form a huge connected web (cf.
\cite{Re} \cite{Ro}).  There is also  a  projective version of this
conjecture,  the connectedness conjecture for  moduli spaces for
Calabi-Yau threefolds (cf. \cite{Gro} \cite{Gro0} \cite{Ro}).
Furthermore, in physics, flops and extremal transitions are related
to the topological change of
 the  space-time in string theory (cf. \cite{CGH} \cite{CGGK} \cite{GMS} \cite{GH}
  \cite{Ro}). Readers are referred to  the
survey article \cite{Ro} for topology, algebraic geometry, and even
physics
   properties of
 extremal    transitions.

 In \cite{Ca},    physicists   P.Candelas and X.C.de la Ossa
   conjectured   that
extremal  transitions and flops  should be
 ``continuous in the space of Ricci-flat K\"{a}hler  metrics", even though   these processes  involve  topologically
distinct Calabi-Yau manifolds. This conjecture was verified  in
\cite{Ca}  for the non-compact quadric cone $M_{0}=\{(z_{0}, \cdots
, z_{3})\in \mathbb{C}^{4}|z^{2}_{0}+\cdots +z^{2}_{3}=0\}$.

 In  the 1980's, Gromov introduced the notion of  Gromov-Hausdorff
distance $d_{GH} $ on the space $\mathfrak{X} $ of isometric classes
of all compact metric spaces (cf. \cite{G1}), such that
$(\mathfrak{X},d_{GH})$ is a complete metric space (cf. \cite{G1}
and Appendix A).  This notion provides a framework to study the
continuity of a family of compact metric spaces with possibly
different topologies. The Gromov-Hausdorff topology provides  a
natural mathematical formulation of Candelas and de la Ossa's
conjecture  as follows:
\begin{itemize}
  \item[i)]   If $\bar{M}\rightarrow M_{0} \rightsquigarrow
M_{t}$, $t\in \Delta\backslash\{0\}\subset \mathbb{C}$, is an
extremal transition  among  Calabi-Yau manifolds, then there exists
a family of Ricci-flat K\"{a}hler metrics $\bar{g}_{s}$, $s\in
(0,1)$, on $\bar{M}$, and  a family of Ricci-flat K\"{a}hler metrics
$\tilde{g}_{t}$  on $M_{t}$  satisfying that $\{( \bar{M},
\bar{g}_{s})\}$ and $\{(M_{t}, \tilde{g}_{t})\}$ converge to a
single
   compact metric space $(X, d_{X})$ in the Gromov-Hausdorff
topology,   $$(M_{t}, \tilde{g}_{t})
\stackrel{d_{GH}}\longrightarrow (X, d_{X})
\stackrel{d_{GH}}\longleftarrow (\bar{M}, \bar{g}_{s}), \ \ \ \ \
 \ \  s\rightarrow 0, \ t\rightarrow0.$$  \item[ii)]  If $\bar{M}_{1} \rightarrow M_{0} \dashrightarrow
\bar{M}_{2}$ is a flop between two Calabi-Yau manifolds, then  there
are families  of Ricci-flat K\"{a}hler metrics $\bar{g}_{i,s}$,
$s\in (0,1)$ on   $\bar{M}_{i}$ ($i=1,2$)  such that
$$( \bar{M}_{1}, \bar{g}_{1,s}) \stackrel{d_{GH}}\longrightarrow (X,
d_{X}) \stackrel{d_{GH}}\longleftarrow ( \bar{M}_{2},
\bar{g}_{2,s}),   \ \ \ \ \ \ \ \  s\rightarrow 0,
$$ for a single  compact
metric space  $(X, d_{X})$.
\end{itemize}
  Furthermore,  in both cases  $X$  is homeomorphic to $M_{0}$ and $d_{X}$ is induced by a Ricci-flat K\"{a}hler
metric on  $M_{0}\backslash S$.
   In  the present   paper, we shall  prove  i) and ii) of   the above version of  Candelas and de la Ossa's conjecture.

 Let  $M_{0}$ be a projective  normal  Cohen-Macaulay
$n$-dimensional variety  with singular set $S$, and let
$\mathcal{K}_{M_{0}}$ be the canonical sheaf of
 $M_{0}$ (\cite{Ha}).  In this paper, all varieties are assumed to be
 Cohen-Macaulay.  We call
  $ M_{0}$  Gorenstein if   $\mathcal{K}_{M_{0}}$ is a rank one locally free
  sheaf.     Assume that $M_{0}$  has  only canonical
singularities, i.e.,  $M_{0}$ is Gorenstein, and   for any resolution
$( \bar{M}, \bar{\pi})$,
 $$\mathcal{K}_{\bar{M}}=\bar{\pi}^{*}\mathcal{K}_{M_{0}}+\sum a_{E}E,  \quad a_{E}\geq
 0,$$ where $E$ are effective  exceptional divisors.
   Consider a resolution $(\bar{M}, \bar{\pi})$ of $M_{0}$.
  If  $ \alpha $ is an ample class in the Picard group
of  $ M_{0}$,
 $\bar{\pi}^{*} \alpha$ belongs  the boundary of the K\"{a}hler cone
of $\bar{M}$.   A resolution $(\bar{M}, \bar{\pi})$ of $M_{0}$ is
called a \emph{crepant resolution}  if
$\mathcal{K}_{\bar{M}}=\bar{\pi}^{*}\mathcal{K}_{M_{0}} $  and is
called a \emph{small resolution} if the exceptional subvariety
$\bar{\pi}^{-1} (S)$ satisfies  $
\dim_{\mathbb{C}}\bar{\pi}^{-1} (S)\leq n-2$.  It is obvious that
$(\bar{M}, \bar{\pi})$ is crepant if it is a small resolution.
  If
$M_{0}$ admits a smoothing $(\mathcal{M}, \pi)$ over a unit  disc $
\Delta\subset\mathbb{C}$ with  an ample line bundle $\mathcal{L}$ on
$ \mathcal{M}$, then  there is an embedding
$\mathcal{M}\hookrightarrow \mathbb{CP}^{N}\times \Delta$ such that
$ \mathcal{L}^{m}=\mathcal{O}_{\Delta}(1)|_{\mathcal{M}}$ for some
$m\geq 1$,  $\pi$ is a proper surjection given by
   the restriction of
 the projection from $\mathbb{CP}^{N}\times \Delta $ to $\Delta$, and
     the rank of $\pi_{*}$ is 1 on $
\mathcal{M}\backslash S$. This implies that $M_{t}$, $t\in
\Delta\backslash\{0\}$, have the same underlying   differential
manifold $ \tilde{M}$.  Moreover, if $\mathcal{L}$ is a line bundle
on $\mathcal{M}$ such that the restriction of  $\mathcal{L}$  on
$M_{0}$  is ample, then by Proposition 1.41 in \cite{KM}
$\mathcal{L}$ is ample on $\pi^{-1}(\Delta')$ where
$\Delta'\subseteq \Delta$ is a neighborhood of $0$.

   A Calabi-Yau variety is a simply connected
  projective normal  variety $M_{0}$ with  trivial canonical
sheaf $\mathcal{K}_{M_{0}}\cong \mathcal{O}_{M_{0}}$ and only
canonical singularities.  If a Calabi-Yau variety $M_{0}$ admits a
crepant resolution $(\bar{M}, \bar{\pi})$, then $\bar{M}$ is a
Calabi-Yau manifold. Our first result proves i) in the   above
version of Candelas and de la Ossa's conjecture.

 \vskip 3mm

\begin{theorem}\label{t1.1}Let  $M_{0}$ be a  Calabi-Yau
$n$-variety  with singular set $S$. Assume that
\begin{itemize}
  \item[i)] $M_{0}$ admits a smoothing $\pi:
\mathcal{M}\rightarrow \Delta$  over  the unit disc $\Delta\subset
\mathbb{C}$  such that the relative   canonical bundle
$\mathcal{K}_{\mathcal{M}/\Delta}$ is trivial, i.e.,
$\mathcal{K}_{\mathcal{M}/\Delta}\cong \mathcal{O}_{\mathcal{M}} $
and  $ \mathcal{M}$ admits    an ample line bundle $\mathcal{L}$.
For any $t\in \Delta\backslash \{0\}$, let $\tilde{g}_{t}$ be the
unique Ricci-flat
  K\"{a}hler metric on $M_{t}=\pi^{-1}(t) $ with  K\"{a}hler form $\tilde{\omega}_{t}\in  c_{1}(\mathcal{L})|_{M_{t}}
  $.\item[ii)] $M_{0}$ admits a crepant  resolution $(\bar{M}, \bar{\pi})$.
  Let
  $\{\bar{g}_{s}\}$ ($s\in (0, 1] $) be  a family of Ricci-flat
  K\"{a}hler metrics with  K\"{a}hler classes $\lim\limits_{s\rightarrow 0}[\bar{\omega}_{s}]=\bar{\pi}^{*} c_{1}(\mathcal{L})|_{M_{0}} $
   in $H^{1,1}(\bar{M}, \mathbb{R} )$, where $\bar{\omega}_{s}$ denotes
   the corresponding K\"{a}hler form of  $ \bar{g}_{s}$.
   \end{itemize}
  Then there exists a compact length metric space $(X, d_{X})$ such
  that $$ \lim_{t\rightarrow 0}d_{GH}((M_{t}, \tilde{g}_{t}),(X, d_{X}))=
  \lim_{s\rightarrow 0}d_{GH}((\bar{M}, \bar{g}_{s}),(X, d_{X}))=0. $$
   Furthermore,  $(X, d_{X})$ is isometric to the metric completion
   $\overline{(M_{0}\backslash S,d_{g})} $ where $g$ is a Ricci-flat
  K\"{a}hler metric on $M_{0}\backslash S$, and $d_{g}$ is
  Riemannian distance function of $g$.
 \end{theorem}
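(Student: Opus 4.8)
The strategy will be to single out $(X,d_{X})$ first, as a canonical Ricci-flat limit attached to $M_{0}$ itself, and then to show that each of the two degenerating families converges to it. \emph{Step 1 (a Ricci-flat metric on $M_{0}\setminus S$).} Since $M_{0}$ is a Calabi--Yau variety with only canonical singularities and $c_{1}(\mathcal{L})|_{M_{0}}$ is a K\"ahler class, I will invoke the pluripotential theory for complex Monge--Amp\`ere equations on normal varieties (Ko\l odziej's $C^{0}$ estimate, Eyssidieux--Guedj--Zeriahi) to produce a closed positive $(1,1)$-current $\omega\in c_{1}(\mathcal{L})|_{M_{0}}$ with bounded local potentials that restricts to a smooth Ricci-flat K\"ahler metric $g$ on $M_{0}\setminus S$ and satisfies $\int_{M_{0}\setminus S}\omega^{n}=(c_{1}(\mathcal{L})|_{M_{0}})^{n}>0$. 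I then set $(X,d_{X}):=\overline{(M_{0}\setminus S,d_{g})}$; its compactness, finite diameter and the fact that $d_{X}$ is a length metric will drop out of Steps 2--3, since Ricci limit spaces are geodesic spaces.

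\emph{Step 2 (the resolution side $\bar M\to X$).} Crepancy and birationality of $\bar\pi$ give $(\bar\pi^{*}c_{1}(\mathcal{L})|_{M_{0}})^{n}=(c_{1}(\mathcal{L})|_{M_{0}})^{n}$, so the volumes $\int_{\bar M}\bar\omega_{s}^{n}=[\bar\omega_{s}]^{n}$ stay bounded and bounded away from $0$ as $s\to0$. I will need two a priori estimates uniform in $s$: a diameter bound $\operatorname{diam}(\bar M,\bar g_{s})\le C$ and a non-collapsing bound $\Vol_{\bar g_{s}}(B_{r}(x))\ge v\,r^{2n}$ for $x$ outside a fixed neighbourhood of $\bar\pi^{-1}(S)$. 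Granting these, Gromov's compactness theorem (applicable as $\Ric(\bar g_{s})=0$) gives, for any sequence $s_{i}\to0$, a subsequential Gromov--Hausdorff limit $Y$. To identify $Y$ with $X$: writing $\bar\omega_{s}=\chi_{s}+i\partial\bar\partial\varphi_{s}$ for suitable reference K\"ahler forms and applying Ko\l odziej's estimate yields $\|\varphi_{s}\|_{C^{0}}\le C$; since on $\bar M\setminus\bar\pi^{-1}(S)\cong M_{0}\setminus S$ the reference data and the right-hand side of the Monge--Amp\`ere equation converge smoothly on compact subsets, local elliptic (Schauder, Evans--Krylov) estimates force the $\bar g_{s}$ to converge to $g$ in $C^{\infty}_{loc}(M_{0}\setminus S)$ under this biholomorphism. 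Hence $(M_{0}\setminus S,g)$ embeds isometrically into $Y$, with dense image because $\bar\pi^{-1}(S)$ has complex codimension $\ge 2$ and so its tubular neighbourhoods have volume tending to $0$; therefore $Y=\overline{(M_{0}\setminus S,d_{g})}=X$. Independence of the subsequence gives $d_{GH}((\bar M,\bar g_{s}),(X,d_{X}))\to0$, and in particular $X$ is a compact length space.

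\emph{Step 3 (the smoothing side $M_{t}\to X$ --- the essential step).} Using $\mathcal{K}_{\mathcal{M}/\Delta}\cong\mathcal{O}_{\mathcal{M}}$, fix a relative holomorphic volume form $\Omega$ with fibrewise restrictions $\Omega_{t}$; then $\tilde\omega_{t}$ solves $\tilde\omega_{t}^{n}=c_{t}\,\Omega_{t}\wedge\bar\Omega_{t}$, and writing $\tilde\omega_{t}=\chi_{t}+i\partial\bar\partial\phi_{t}$ with $\chi_{t}$ the restriction of a fixed Fubini--Study form on $\mathbb{CP}^{N}\times\Delta$ (so $\operatorname{diam}(M_{t},\chi_{t})\le D_{0}$), this becomes a Monge--Amp\`ere equation with density $\Omega_{t}\wedge\bar\Omega_{t}/\chi_{t}^{n}$. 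The key point is to show this density is uniformly bounded in $L^{p}(M_{t},\chi_{t})$ for some $p>1$ as $t\to0$; this is where the Gorenstein canonical-singularity hypothesis on $M_{0}$ is used, via a local log-resolution computation of the vanishing order of $\Omega_{0}$ along $S$. Ko\l odziej's estimate then gives $\|\phi_{t}\|_{C^{0}}\le C$ uniformly, and on compact subsets of $M_{0}\setminus S$ --- over which, by Ehresmann's theorem applied to the submersion $\pi$ away from $S$, the fibres $M_{t}$ are identified with $M_{0}\setminus S$ for $t$ small and the reference data converge --- local elliptic estimates upgrade this to $\tilde\omega_{t}\to g$ in $C^{\infty}_{loc}(M_{0}\setminus S)$. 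Together with $\int_{M_{t}}\tilde\omega_{t}^{n}=(c_{1}(\mathcal{L})|_{M_{t}})^{n}=(c_{1}(\mathcal{L})|_{M_{0}})^{n}$, a uniform diameter bound $\operatorname{diam}(M_{t},\tilde g_{t})\le C$ and a non-collapsing bound away from $S$, Gromov compactness produces subsequential limits $Z$; the smooth convergence embeds $(M_{0}\setminus S,g)$ isometrically in $Z$, and I will show its image is dense by proving that the $\tilde g_{t}$-volume of the part of $M_{t}$ over a small neighbourhood of $S$ tends to $0$ with the neighbourhood, uniformly in $t$. Hence $Z=X$, so $d_{GH}((M_{t},\tilde g_{t}),(X,d_{X}))\to0$.

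Combining Steps 2 and 3 yields the asserted double limit with $(X,d_{X})=\overline{(M_{0}\setminus S,d_{g})}$. The main obstacle is Step 3: establishing the uniform $L^{p}$ bound on the Monge--Amp\`ere densities on the degenerating fibres near $S$ (hence the uniform $C^{0}$ bound on the potentials), the uniform diameter bound on $(M_{t},\tilde g_{t})$, and the quantitative control of the geometry near $S$ needed to conclude that $M_{0}\setminus S$ has dense image in the limit. The resolution side (Step 2) is comparatively standard.
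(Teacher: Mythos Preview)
Your overall architecture matches the paper's: construct the singular Ricci-flat metric $g$ on $M_0\setminus S$ via Eyssidieux--Guedj--Zeriahi, handle the resolution side by Tosatti's $C^{\infty}_{loc}$ convergence plus Gromov compactness, handle the smoothing side by proving $C^{\infty}_{loc}$ convergence of $\tilde g_t$ to $g$ and then again invoking Gromov compactness, and finally identify both limits with $\overline{(M_0\setminus S,d_g)}$. Where you diverge from the paper is in the mechanism for the two genuinely hard ingredients of Step~3, and here there is a real gap.

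\textbf{The $C^{0}$ estimate on the smoothing side.} You propose to bound $\|\phi_t\|_{C^0}$ by Ko\l odziej, after showing that the density $(\Omega_t\wedge\bar\Omega_t)/\chi_t^{\,n}$ lies uniformly in $L^p(M_t,\chi_t)$ for some $p>1$. This is the step that does not go through as written: near the singular set the ratio can blow up, and your ``log-resolution computation of the vanishing order of $\Omega_0$'' is a statement about $M_0$, not about the behaviour of $\Omega_t$ on the nearby smooth fibres. Even the uniform $L^1$ bound $\int_{M_t}\Omega_t\wedge\bar\Omega_t\le\Lambda$ is nontrivial: the paper proves it (Appendix~B) via the nilpotent orbit theorem and the limiting mixed Hodge structure, using that canonical singularities force $N\mathcal{F}^n_{\lim}=0$. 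The paper never establishes (and does not need) an $L^p$ bound with $p>1$. Instead it reverses the roles of the two metrics: one has the \emph{lower} bound $(-1)^{n^2/2}\Omega_t\wedge\bar\Omega_t\ge\kappa\,\omega_t^{\,n}$ (Theorem~B.1\,(i)), hence $\omega_t^{\,n}=e^{-f_t}\tilde\omega_t^{\,n}$ with $e^{-f_t}$ uniformly bounded \emph{above}. One then runs Moser iteration for $\tilde\varphi_t=-\varphi_t+\text{const}$ against the \emph{Ricci-flat} metric $\tilde g_t$. This requires a uniform Sobolev constant for $\tilde g_t$, which in turn requires the uniform diameter bound you simply assert. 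In the paper that bound is itself a theorem (Theorem~2.1, Corollary~2.2): a Demailly--Peternell--Schneider type argument combined with the Hodge-theoretic estimate $\int_{M_t}\Omega_t\wedge\bar\Omega_t\le\Lambda$. So the logical order is diameter bound $\Rightarrow$ Sobolev $\Rightarrow$ $C^0$, not $C^0$ first via Ko\l odziej.

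\textbf{Identifying the limit.} Your density argument (``tubular neighbourhoods of $\bar\pi^{-1}(S)$ have small volume, hence $M_0\setminus S$ has dense image in the limit'') is not sufficient: small volume does not by itself preclude a limit point coming from the exceptional set, nor does it show the map to the limit is an isometry. The paper handles this in two lemmas. Lemma~5.1 shows that the set $S_X\subset X$ of limits of points in $\bar\pi^{-1}(S)$ coincides with the Cheeger--Colding singular set $S'$ (hence has Hausdorff codimension $\ge 4$); the key step is a lower volume bound for complex subvarieties in balls with bounded sectional curvature (Theorem~5.3), which forces $\mathrm{Vol}_{\bar g_s}(\bar\pi^{-1}(S))$ to stay bounded below near any putative point of $S_X\setminus S'$, contradicting $[\bar\omega_s]\to\bar\pi^*c_1(\mathcal L)$. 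Lemma~5.2 then shows, via Cheeger--Colding volume convergence and the codimension-$2$ property of $S_X$, that any family with the right volume and $C^{\infty}_{loc}$ convergence to $g$ has the same Gromov--Hausdorff limit $(X,d_X)$. This is what lets you match the smoothing side to the resolution side without repeating the analysis.
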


 \vskip 3mm

The following is a simple example from \cite{GMS} for which  Theorem
\ref{t1.1} can apply. Let $\bar{M}$ be the complete intersection in
$\mathbb{CP}^{4}\times \mathbb{CP}^{1} $ given by
$$y_{0}\mathfrak{g}(z_{0},\cdots ,z_{4})+y_{1}\mathfrak{h}(z_{0},\cdots,z_{4})=0,
\ \ \ y_{0}z_{4} -y_{1}z_{3}=0, $$ where $z_{0},\cdots,z_{4}$ are
homogeneous coordinates of $\mathbb{CP}^{4}$, $y_{0},y_{1}$ are
homogeneous coordinates of $\mathbb{CP}^{1}$, and $\mathfrak{g}$ and
$\mathfrak{h}$ are generic  homogeneous polynomials of degree 4.
Then $\bar{M}$ is a  crepant resolution of the quintic conifold
$M_{0}$ given by $$z_{3}\mathfrak{g}(z_{0},\cdots
,z_{4})+z_{4}\mathfrak{h}(z_{0},\cdots,z_{4})=0$$  (cf. \cite{Ro}).
Hence there is a conifold transition $\bar{M}\rightarrow M_{0}
\rightsquigarrow \tilde{M} $ for any smooth quintic $\tilde{M}$ in
$\mathbb{CP}^{4}$.  Theorem  \ref{t1.1} implies that there is a
family of Ricci-flat K\"{a}hler metrics
 $\bar{g}_{s}$ ($s\in (0, 1] $) on $\bar{M} $ and a family of Ricci-flat  smooth quintic
 $(M_{t},\tilde{g}_{t})$ ($ t\in\Delta\backslash \{0\}$)   such that
 $M_{1} =\tilde{M}$, and $$(M_{t}, \tilde{g}_{t}) \stackrel{d_{GH}}\longrightarrow
(X, d_{X}) \stackrel{d_{GH}}\longleftarrow  (\bar{M}, \bar{g}_{s}),
$$ for a compact metric space $(X, d_{X})$.

Our second  result proves ii) in the   above version of Candelas and de
la Ossa's conjecture.

\vskip 3mm

\begin{theorem}\label{t1.01+} Let  $M_{0}$ be an $n$-dimensional Calabi-Yau
variety  with singular set $S$,  and $\mathcal{L}$ be an ample
line bundle.  Assume that $M_{0}$ admits two crepant resolutions
$(\bar{M}_{1}, \bar{\pi}_{1})$ and $(\bar{M}_{2}, \bar{\pi}_{2})$.
 Let
 $\{\bar{g}_{1,s}\}$ (resp. $\{\bar{g}_{2,s}\}$ $s\in (0, 1] $)  be a family of Ricci-flat
  K\"{a}hler metrics on $\bar{M}_{1} $ (resp. $\bar{M}_{2} $) with  K\"{a}hler classes
  $\lim\limits_{s\rightarrow 0}[\bar{\omega}_{\alpha,s}]=\bar{\pi}_{\alpha}^{*} c_{1}(\mathcal{L})
  $, $\alpha=1,2$.
  Then there exists a compact length metric space $(X, d_{X})$ such
  that $$ \lim_{s\rightarrow 0}d_{GH}((\bar{M}_{1}, \bar{g}_{1,s}),(X, d_{X}))=
  \lim_{s\rightarrow 0}d_{GH}((\bar{M}_{2}, \bar{g}_{2,s}),(X, d_{X}))=0. $$
   Furthermore, $(X, d_{X})$ is isometric to the metric completion
   $\overline{(M_{0}\backslash S,d_{g})} $ where $g$ is a Ricci-flat
  K\"{a}hler metric on $M_{0}\backslash S$, and $d_{g}$ is
  the Riemannian distance function of $g$.
\end{theorem}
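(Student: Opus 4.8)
The strategy is to reduce Theorem~\ref{t1.01+} to a convergence result for a \emph{single} crepant resolution, apply it to $(\bar{M}_{1},\bar{\pi}_{1})$ and $(\bar{M}_{2},\bar{\pi}_{2})$ separately, and identify the two Gromov--Hausdorff limits using uniqueness of the canonical singular Ricci-flat metric on $M_{0}$. Thus the main step is to prove: if $(\bar{M},\bar{\pi})$ is a crepant resolution of the Calabi--Yau variety $M_{0}$, $\mathcal{L}$ is ample on $M_{0}$, and $\{\bar{g}_{s}\}_{s\in(0,1]}$ are Ricci-flat K\"ahler metrics on $\bar{M}$ with $\lim_{s\to0}[\bar{\omega}_{s}]=\bar{\pi}^{*}c_{1}(\mathcal{L})$, then $(\bar{M},\bar{g}_{s})$ converges, as $s\to0$, in the Gromov--Hausdorff topology to $\overline{(M_{0}\setminus S,\,d_{g})}$, where $g=\omega_{\mathrm{KE}}|_{M_{0}\setminus S}$ and $\omega_{\mathrm{KE}}$ is the unique closed positive $(1,1)$-current in $c_{1}(\mathcal{L})$ with locally bounded potentials solving the degenerate Monge--Amp\`ere equation $\omega_{\mathrm{KE}}^{\,n}=c\,\Omega$ on $M_{0}$, $\Omega$ being the measure attached to a nowhere-vanishing local section of $\mathcal{K}_{M_{0}}$. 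For this I would: (i) write $\bar{\omega}_{s}=\bar{\pi}^{*}\chi_{s}+\sqrt{-1}\,\partial\bar{\partial}\varphi_{s}$ with smooth $\chi_{s}$, $[\chi_{s}]\to\bar{\pi}^{*}c_{1}(\mathcal{L})$, and get a uniform $C^{0}$ bound on $\varphi_{s}$ from Ko\l odziej / Eyssidieux--Guedj--Zeriahi estimates, whence $\bar{\omega}_{s}\to\bar{\pi}^{*}\omega_{\mathrm{KE}}$ in $C^{\infty}_{\mathrm{loc}}$ on $\bar{M}\setminus E$, $E:=\bar{\pi}^{-1}(S)$; (ii) establish a uniform diameter bound, e.g.\ by bounding $\bar{\omega}_{s}$ from above by a fixed multiple of the pullback of a Fubini--Study metric (up to an $s$-small error) via a Chern--Lu/second-order estimate, so that distances in $(\bar{M},\bar{g}_{s})$ are controlled by distances in $M_{0}\subset\mathbb{CP}^{N}$; (iii) note $\Ric(\bar{g}_{s})\equiv0$ and $\Vol(\bar{M},\bar{g}_{s})=\tfrac{1}{n!}\int_{\bar{M}}\bar{\omega}_{s}^{\,n}\to\tfrac{1}{n!}(\bar{\pi}^{*}c_{1}(\mathcal{L}))^{n}>0$, crepancy being exactly what guarantees that $\bar{\pi}^{*}\Omega$ is again a nowhere-vanishing volume form of finite mass so that no volume collapses onto $E$; (iv) apply Gromov's precompactness theorem to extract a limit length space $X$, then use the $C^{\infty}_{\mathrm{loc}}$ convergence on $M_{0}\setminus S$, the volume non-collapsing, and Cheeger--Colding structure theory for $\Ric\ge0$ to identify the regular part of $X$ with $(M_{0}\setminus S,d_{g})$ and conclude $X=\overline{(M_{0}\setminus S,d_{g})}$.

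Granting this, apply it to $(\bar{M}_{1},\bar{\pi}_{1})$ and to $(\bar{M}_{2},\bar{\pi}_{2})$ to get $(\bar{M}_{\alpha},\bar{g}_{\alpha,s})\to\overline{(M_{0}\setminus S,\,d_{g_{\alpha}})}$ with $g_{\alpha}=\omega_{\mathrm{KE},\alpha}|_{M_{0}\setminus S}$, $\alpha=1,2$. In each case $\omega_{\mathrm{KE},\alpha}$ is a closed positive current in $c_{1}(\mathcal{L})$ on the \emph{same} variety $M_{0}$, with locally bounded potential, solving the \emph{same} equation $\omega^{n}=c\,\Omega$ --- the data $c_{1}(\mathcal{L})$, $\Omega$, and hence $c$, depend only on $(M_{0},\mathcal{L})$, not on the resolution. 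By the uniqueness part of the Eyssidieux--Guedj--Zeriahi theory, $\omega_{\mathrm{KE},1}=\omega_{\mathrm{KE},2}=:\omega_{\mathrm{KE}}$, so $g_{1}=g_{2}=:g$, and both families converge to the single compact length metric space $(X,d_{X}):=\overline{(M_{0}\setminus S,d_{g})}$ --- a length space as a Gromov--Hausdorff limit of length spaces --- which has exactly the asserted form. This proves Theorem~\ref{t1.01+}.

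The main obstacle is the single-resolution convergence above: the uniform diameter bound as $[\bar{\omega}_{s}]$ degenerates to the merely nef-and-big class $\bar{\pi}^{*}c_{1}(\mathcal{L})$, and the exclusion of hidden collapsing along $E$, i.e.\ showing that $\bar{g}_{s}$-geodesics cannot develop arbitrarily short shortcuts through a shrinking neighborhood of $E$. One uses in tandem the codimension $\ge2$ of the exceptional loci (automatic for the small resolutions occurring in a flop; more generally the vanishing of the discrepancies under crepancy plays the analogous role) and the volume non-collapsing of item (iii). Once the single-resolution statement is in place the flop assertion is essentially formal, since $\bar{M}_{1}$ and $\bar{M}_{2}$ differ only over $S$ whereas the entire limiting geometry is carried by $M_{0}\setminus S$.
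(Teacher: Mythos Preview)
Your overall strategy coincides with the paper's: treat each crepant resolution separately, use the $C^{\infty}_{\mathrm{loc}}$ convergence of $\bar g_{s}$ to the pullback of the unique singular Ricci-flat metric on $M_{0}$ (Tosatti/EGZ), get uniform diameter and volume bounds, extract a Gromov--Hausdorff limit, identify it with $\overline{(M_{0}\setminus S,d_{g})}$, and finally use uniqueness of $\omega_{\mathrm{KE}}$ in $c_{1}(\mathcal{L})$ to match the two limits. The paper literally says ``the same argument in the proof of Theorem~\ref{t1.1} also gives a proof of Theorem~\ref{t1.01+}''.

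Two of your technical suggestions do need repair, however.

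\emph{The diameter bound.} Your step (ii) proposes Chern--Lu to get $\bar\omega_{s}\le C\,\bar\pi^{*}\omega_{FS}$ globally. Chern--Lu goes the other way: applied to the holomorphic map into $(\mathbb{CP}^{N},\omega_{FS})$ it bounds $\mathrm{tr}_{\bar\omega_{s}}(\bar\pi^{*}\omega_{FS})$, yielding $\bar\pi^{*}\omega_{FS}\le C\,\bar\omega_{s}$ globally. The reverse inequality only holds on compacta in $\bar M\setminus E$ (this is exactly the paper's Lemma~\ref{l5.4} in the smoothing setting), so it cannot give a diameter bound. In the paper the resolution-side diameter bound is simply quoted from \cite{RZ}, \cite{To}; for the smoothing side the paper proves it by a Demailly--Peternell--Schneider argument (Lemma~\ref{cl1.2+}): one produces, in a fixed coordinate polydisc, a subset of definite $\omega_{FS}$-volume and bounded $\bar g_{s}$-diameter, obtains a lower bound on $\mathrm{Vol}_{\bar g_{s}}(B_{\bar g_{s}}(p,1))$, and concludes via Bishop--Gromov and Lemma~\ref{t3.03}.

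\emph{Identifying the limit.} Your step (iv) invokes ``Cheeger--Colding structure theory'' to equate $X$ with $\overline{(M_{0}\setminus S,d_{g})}$, but this hides the one nontrivial point. The paper's Theorem~\ref{t2.01} provides a homeomorphic local isometry $X\setminus S_{X}\to (M_{0}\setminus S,g)$, where $S_{X}\subset X$ is the set of all limits of points in $E=\bar\pi^{-1}(S)$. One has $S'\subseteq S_{X}$ with $S'$ the Cheeger--Colding singular set, but a priori $S_{X}$ could be strictly larger, and then $\dim_{\mathcal{H}}S_{X}$ is uncontrolled and the length-space identification fails. The paper closes this via the subvariety volume comparison (Theorem~\ref{p2+.1}): if $x\in S_{X}\setminus S'$ then $x$ is a regular point, the approximating $\bar x_{k}\in E$ sit in regions of uniformly bounded curvature and injectivity radius for $\bar g_{s_{k}}$, hence $\mathrm{Vol}_{\bar g_{s_{k}}}(E)$ is bounded below; but $\mathrm{Vol}_{\bar g_{s_{k}}}(E)=\sum_{i}\frac{1}{(\dim E_{i})!}\int_{E_{i}}\bar\omega_{s_{k}}^{\dim E_{i}}\to 0$ because $[\bar\omega_{s_{k}}]\to\bar\pi^{*}c_{1}(\mathcal{L})$ restricts trivially to the exceptional components. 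This forces $S_{X}=S'$, hence $\dim_{\mathcal{H}}S_{X}\le 2n-4$, and the identification $X\cong\overline{(M_{0}\setminus S,d_{g})}$ follows (Lemma~\ref{l3.1}). Your remark about the codimension of $E$ is in the right spirit, but the actual mechanism is this volume contradiction on the limit side, not a direct codimension count on $\bar M$.
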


 \vskip 3mm

 \begin{remark} The present arguments are inadequate to  prove that  $X$ is homeomorphic to
$M_{0}$ in both Theorem \ref{t1.1} and Theorem \ref{t1.01+}.
Additional work is required. However, if $M_{0}$ has only orbifold
singularities, and $c_{1}(\mathcal{L}) $ can be represented by an
orbifold K\"{a}hler metric on $M_{0}$, then $X$ is homeomorphic to
$M_{0}$ by
 Corollary 1.1 in \cite{RZ}.
\end{remark}
 \vskip 3mm

 We now begin to describe our approach to Theorem \ref{t1.1}  and Theorem  \ref{t1.01+}.
  Let  $M_{0}$ be a normal $n$-dimensional projective variety with singular set
$S$.
  For any $p\in S$ and a  small neighborhood $U_{p}\subset M_{0}$ of $p$,
a pluri-subharmonic function $v$ (resp. strongly pluri-subharmonic,
and pluri-harmonic) on $U_{p}$ is an upper semi-continuous function
with value in $\mathbb{R}\cup \{-\infty\}$ ($v$  is not locally
$-\infty$)  such that  $v$ extends to a pluri-subharmonic function $
\tilde{v}$ (resp. strongly pluri-subharmonic, and pluri-harmonic) on
a neighborhood of the image of  some local embedding $U_{p}
\hookrightarrow \mathbb{C}^{m}$.  We call $v$ smooth if  $
 \tilde{v}$ is smooth.
     A   form $\omega$  on $M_{0}$
       is called a K\"{a}hler form, if $\omega$  is a smooth K\"{a}hler form  in the usual
sense on $M_{0}\backslash S $ and, for any  $p\in S$, there is a
neighborhood $U_{p}$  and a continuous strongly pluri-subharmonic
function $v$ on
 $U_{p}$ such that
 $\omega=\sqrt{-1}\partial\overline{\partial}v$ on $U_{p}\bigcap (M_{0}\backslash
 S)
 $. We call $\omega$  smooth if $ v$ is
 smooth in the  above sense. Otherwise, we call $\omega$ a singular K\"{a}hler form.
 If $\mathcal{PH}_{M_{0}}$ denotes the sheaf of pluri-harmonic
functions on $M_{0}$, then  any K\"{a}hler form $\omega$ represents
a class $[\omega]$ in $H^{1}(M_{0}, \mathcal{PH}_{M_{0}})$ (cf.
Section 5.2 in \cite{EGZ}).  We also have an  analogue of Chern-Weil
theory for line bundles on $M_{0}$ (see \cite{EGZ} for details).
 If $ \mathcal{L}_{0}$ is an ample line bundle on $M_{0}$, then there is an embedding $M_{0}
\hookrightarrow \mathbb{CP}^{N}$ such   that $
\mathcal{L}_{0}^{m}=\mathcal{O}(1)|_{M_{0}}$, and the first Chern
class $c_{1}( \mathcal{L}_{0}) $ can be presented by a smooth
K\"{a}hler form;   $ c_{1}( \mathcal{L}_{0})=\frac{1}{
m}[\omega_{FS}|_{M_{0}}]\in  H^{1}(M_{0}, \mathcal{PH}_{M_{0}})$,
where $\omega_{FS}$ denotes the standard Fubini-Study K\"{a}hler
form on $\mathbb{CP}^{N}$.

In \cite{EGZ} (see also \cite{Z}), a generalized Calabi-Yau theorem
was obtained, which says that if $M_{0}$ is a Calabi-Yau variety,
then  for any ample line bundle $ \mathcal{L}_{0}$ there is a unique
Ricci-flat K\"{a}hler  form $\omega \in c_{1}( \mathcal{L}_{0})$. We
denote by $g$ the corresponding K\"{a}hler metric of $\omega$ on
$M_{0}\backslash S$.   If  $M_{0}$ admits a crepant resolution
$(\bar{M}, \bar{\pi})$, and $\alpha_{s}\in H^{1,1}(\bar{M},
\mathbb{R})$, $ s\in (0,1)$, is a family of K\"{a}hler classes with
$\lim\limits_{s\rightarrow 0} \alpha_{s}=\bar{\pi}^{*} c_{1}(
\mathcal{L}_{0})$, \cite{To} proved that
$$\bar{g}_{s} \longrightarrow \bar{\pi}^{*}g, \ \ \ \bar{\omega}_{s}
\longrightarrow \bar{\pi}^{*}\omega, \ \ \ \ \  s\rightarrow 0$$ in
the $C^{\infty}$-sense on any compact subset $K$  of
$\bar{M}\backslash \bar{\pi}^{-1}(S)$, where $\bar{g}_{s}$ is the
unique Ricci-flat K\"{a}hler metric with K\"{a}hler form
$\bar{\omega}_{s}\in \alpha_{s}$.  Assume that $M_{0}$ is a
Calabi-Yau conifold  and $M_{0}$  admits a smoothing $
(\mathcal{M},\pi)$ satisfying  that the relative canonical bundle
$\mathcal{K}_{\mathcal{M}/\Delta}$ is trivial  and that  $
\mathcal{M}$
   admits   an ample line bundle $\mathcal{L}$ such
that $\mathcal{L}|_{M_{0}}=\mathcal{L}_{0}$. For any $t\in
\Delta\backslash \{0\}$, if $\tilde{g}_{t}$ denotes the unique
Ricci-flat
  K\"{a}hler metric on $M_{t}=\pi^{-1}(t) $ with  K\"{a}hler form $\tilde{\omega}_{t}\in
c_{1}(\mathcal{L})|_{M_{t}}$,  \cite{RZ}  proved that $$
F_{t}^{*}\tilde{g}_{t} \longrightarrow  g, \ \
F_{t}^{*}\tilde{\omega}_{t} \longrightarrow  \omega, \  \ \ {\rm
when} \ \
 t \rightarrow0,$$
 in the $C^{\infty}$-sense on  any compact subset $K \subset
M_{0}\backslash S$,  where $F_{t}: M_{0}\backslash S \longrightarrow
M_{t}$ is a family of embeddings. If $M_{0}$ is  a Calabi-Yau
variety (not necessarily  a  conifold)  and   $ \mathcal{M}$ is
smooth, then a subsequence-$C^{\infty}$ convergence theorem for the
Ricci-flat K\"{a}hler metric $\tilde{g}_{t}$ on $M_{t}$ was obtained
in \cite{RZ}, i.e., there is a sequence $t_{k}\in \Delta\backslash
\{0\}$ such that $t_{k}\rightarrow 0$, and  $
F_{t_{k}}^{*}\tilde{g}_{t_{k}} $ converges to $  g$ ($
k\rightarrow\infty$)  in the $C^{\infty}$-sense on any compact
subset $K \subset M_{0}\backslash S$.

In the proof of Theorem 1.1, the following  generalization of the
  convergence  results in \cite{RZ} plays a  significant role.

\vskip 3mm
\begin{theorem}\label{t4.2+}
 Let  $M_{0}$ be a Calabi-Yau
$n$-variety ($n\geq 2$)   with singular set $S$. Assume that $M_{0}$
admits a smoothing $\pi: \mathcal{M}\rightarrow \Delta$   such that
$\mathcal{M}$
 admits  an ample line bundle $\mathcal{L}$   and   the
relative canonical bundle is trivial, i.e.,
$\mathcal{K}_{\mathcal{M}/\Delta}\cong \mathcal{O}_{\mathcal{M}}$.
If $\tilde{g}_{t}$ denotes the unique Ricci-flat  K\"{a}hler  metric
with K\"{a}hler form $\tilde{\omega}_{t}\in
c_{1}(\mathcal{L})|_{M_{t}}\in H^{1,1}(M_{t}, \mathbb{R})$  ($t\in
\Delta\backslash \{0\}$), and   $\omega$ denotes   the unique
singular Ricci-flat K\"{a}hler form  on $M_{0}$ with    $\omega\in
c_{1}(\mathcal{L})|_{M_{0}} \in H^{1}(M_{0}, \mathcal{PH}_{M_{0}})$,
 then
$$ F_{t}^{*}\tilde{g}_{t} \longrightarrow  g, \ \ F_{t}^{*}\tilde{\omega}_{t} \longrightarrow  \omega, \  \ \ {\rm
when} \ \
 t \rightarrow0,$$
 in the $C^{\infty}$-sense on  any compact subset $K \subset
M_{0}\backslash S$,  where $F_{t}: M_{0}\backslash S \longrightarrow
M_{t}$ is a smooth  family of embeddings  and $g$ is the
corresponding K\"{a}hler metric of $\omega$ on $M_{0}\backslash S$.
Furthermore, the diameter of $ (M_{t}, \tilde{g}_{t})$ ($t\in
\Delta\backslash \{0\}$)  satisfies $${\rm
diam}_{\tilde{g}_{t}}(M_{t})\leq D,$$ where $D>0$ is a constant
independent of $t$.
\end{theorem}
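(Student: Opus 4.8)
The plan is to combine a Moser-iteration / Schauder bootstrap argument on compact subsets of $M_0\setminus S$ with a uniform diameter bound obtained by a Gromov--Hausdorff precompactness plus volume-non-collapsing argument. First I would set up the analytic framework: embed $\mathcal{M}\hookrightarrow \mathbb{CP}^N\times\Delta$ via a power of $\mathcal{L}$, pull back the Fubini--Study form to get a reference relative K\"ahler form $\omega_{\mathcal{M}}$, and write the fiberwise Ricci-flat metric as $\tilde\omega_t=\omega_{\mathcal{M}}|_{M_t}+\sqrt{-1}\partial\bar\partial\varphi_t$, where $\varphi_t$ solves the complex Monge--Amp\`ere equation $(\omega_{\mathcal{M}}|_{M_t}+\sqrt{-1}\partial\bar\partial\varphi_t)^n=c_t\,\Omega_t\wedge\bar\Omega_t$ with $\Omega_t$ the holomorphic $n$-form on $M_t$ obtained by adjunction from the trivialization of $\mathcal{K}_{\mathcal{M}/\Delta}$, and $c_t$ a normalizing constant depending only on the cohomological volume $\int_{M_t}(c_1(\mathcal{L})|_{M_t})^n$, which is constant in $t$ by flatness. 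Since $\mathcal{K}_{\mathcal{M}/\Delta}\cong\mathcal{O}_{\mathcal{M}}$, the form $\Omega_t$ extends as a holomorphic section over the total space, so $\|\Omega_t\|^2$ is uniformly bounded (above and, away from $S$, below) by a fixed continuous function on $\mathcal{M}$; this is the point that makes the RHS of the Monge--Amp\`ere equation uniformly controlled near $M_0$.

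Next I would prove the convergence on compact sets. Fix a compact $K\subset M_0\setminus S$ and a family of embeddings $F_t\colon M_0\setminus S\to M_t$ (constructed from a local trivialization of $\pi$ over $M_0\setminus S$, which exists because $\pi$ is a submersion there). Pull everything back by $F_t$. The key is a uniform $C^0$ bound $\|\varphi_t\|_{C^0}\le C$ independent of $t$: this follows from the Ko\l odziej-type $L^\infty$ estimate for Monge--Amp\`ere equations with $L^p$ right-hand side, applied on the singular variety $M_0$ as in \cite{EGZ}, together with the uniform integrability of the densities $\|\Omega_t\|^2$ near $S$ (the canonical singularities / $a_E\ge 0$ condition guarantees the relevant $L^p$ bound). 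Given the $C^0$ bound, on $K$ one gets a uniform Laplacian bound by the Aubin--Yau second-order estimate (using that $\omega_{\mathcal{M}}$ has bounded geometry on a fixed neighborhood of $K$ and that the bisectional curvature of the reference metric is bounded there), hence a uniform $C^{1,\alpha}$ bound by Evans--Krylov applied to the complex Monge--Amp\`ere equation, and then uniform $C^{k}$ bounds for all $k$ by differentiating and Schauder. Arzel\`a--Ascoli then yields subsequential $C^\infty$ convergence of $F_t^*\varphi_t$ on $K$ to some $\varphi_\infty$ solving the Monge--Amp\`ere equation on $M_0\setminus S$ with density $\|\Omega_0\|^2$; by the uniqueness part of the generalized Calabi--Yau theorem of \cite{EGZ}, $\varphi_\infty$ is the potential of the unique singular Ricci-flat form $\omega\in c_1(\mathcal{L})|_{M_0}$, so the full family (not just a subsequence) converges, giving $F_t^*\tilde g_t\to g$ and $F_t^*\tilde\omega_t\to\omega$ in $C^\infty_{\mathrm{loc}}(M_0\setminus S)$.

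Finally, for the uniform diameter bound I would argue as follows. The manifolds $(M_t,\tilde g_t)$ all have $\Ric=0$ and fixed volume $V=\int_{M_t}(c_1(\mathcal{L})|_{M_t})^n$; if the diameters were not uniformly bounded, a subsequence would converge in the pointed Gromov--Hausdorff sense, after rescaling if necessary, to a noncompact limit, and by Cheeger--Colding theory combined with the above interior convergence (which identifies the limit on the regular part with the metric completion of $(M_0\setminus S,g)$, a space of finite diameter since $M_0$ is compact and $g$ comes from a K\"ahler current with bounded potential) one derives a contradiction with the volume being concentrated in a bounded region. More concretely: the interior $C^\infty$ convergence plus a volume-comparison estimate shows $\Vol_{\tilde g_t}(F_t(M_0\setminus S\setminus U_\epsilon))\to\Vol_g(M_0\setminus S\setminus U_\epsilon)$ for small neighborhoods $U_\epsilon$ of $S$, while a Bishop--Gromov / maximum-principle estimate on the degenerating region near $\bar\pi^{-1}(S)$-type necks bounds the "lost" volume $\Vol_{\tilde g_t}(F_t(U_\epsilon))$ uniformly in terms of $\epsilon$; combined with non-collapsing this forces $\mathrm{diam}_{\tilde g_t}(M_t)\le D$. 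I expect the main obstacle to be precisely this last step — controlling the geometry of $M_t$ near the vanishing cycles uniformly in $t$, since there one has no uniform $C^0$ background and must instead exploit the special structure of the smoothing (the local model of the degeneration, e.g.\ the ODP local picture or a general bound via the uniform $L^\infty$ estimate on $\varphi_t$ across $M_0$) to show that the neck regions have small diameter and small volume; the compact-set convergence and the Monge--Amp\`ere estimates are comparatively standard once the reference data are organized correctly.
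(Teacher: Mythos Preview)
Your overall scheme (uniform $C^0$, then $C^2$, then bootstrap, then uniqueness from \cite{EGZ}) matches the paper's, but the order of the two main ingredients is reversed in a way that matters, and two of your steps have genuine gaps.

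\textbf{Diameter bound.} In the paper this is proved \emph{first} and independently of any convergence (Theorem~\ref{t1.1+} and Corollary~\ref{c1.1+}): one uses a lemma of Demailly--Peternell--Schneider (Lemma~\ref{cl1.2+}) to find, inside a fixed coordinate polydisc on $M_0\setminus S$, a subset of $M_t$ of definite $\omega_t$-volume and bounded $\tilde g_t$-diameter; the pointwise lower bound $(-1)^{n^2/2}\Omega_t\wedge\bar\Omega_t\ge\kappa\,\omega_t^n$ from Theorem~\ref{B.1}(i) converts this into a lower bound on $\Vol_{\tilde g_t}(B_{\tilde g_t}(p_t,1))$, and then Bishop--Gromov together with the elementary Lemma~\ref{t3.03} gives $\mathrm{diam}_{\tilde g_t}(M_t)\le 2+D\int_{M_t}\Omega_t\wedge\bar\Omega_t$, which is bounded by Theorem~\ref{B.1}(ii). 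Your proposed route, deducing the diameter bound \emph{after} convergence via Cheeger--Colding and a neck analysis, is exactly the part you flag as uncertain, and rightly so: without a diameter bound you do not have Gromov precompactness for the full spaces, and ``volume concentrated in a bounded region'' is a heuristic, not a proof. The paper sidesteps all of this.

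\textbf{$C^0$ estimate.} The paper does \emph{not} use a Ko\l odziej-type estimate. It uses the diameter bound just obtained to get uniform Sobolev constants for $(M_t,\tilde g_t)$ and then runs Moser iteration (Lemma~\ref{l5.3+}) on the \emph{reversed} equation $\omega_t^n=e^{-f_t}\tilde\omega_t^n$, where $e^{-f_t}$ is uniformly bounded precisely by the pointwise lower bound in Theorem~\ref{B.1}(i). Your Ko\l odziej approach would require a uniform-in-$t$ bound on $\|\Omega_t\wedge\bar\Omega_t/\omega_t^n\|_{L^p(\omega_t)}$ for some $p>1$; the paper only establishes the $L^1$ bound (Theorem~\ref{B.1}(ii)) and the pointwise lower bound, and your appeal to ``canonical singularities guarantee the $L^p$ bound'' is a statement about $M_0$, not about uniformity along the family.

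\textbf{$C^2$ estimate.} This is the most concrete gap. The Aubin--Yau second-order estimate is a global maximum principle and needs a \emph{lower} bound on the bisectional curvature of the reference metric $\omega_t=\frac1m\omega_{FS}|_{M_t}$; this is \emph{not} uniformly bounded below as $t\to0$, since $M_t$ degenerates near $S$. Localizing to a neighborhood of $K$ does not help: the maximum of $(n+\Delta_{\omega_t}\varphi_t)e^{-C\varphi_t}$ may well be attained near $S$. The paper instead applies the Chern--Lu inequality (Lemma~\ref{l5.4}) to the holomorphic inclusion $\psi_t:(M_t,\tilde\omega_t)\to(\mathbb{CP}^N,\frac1m\omega_{FS})$, which only needs an \emph{upper} bound on the bisectional curvature of the fixed ambient $\omega_{FS}$; this yields a global bound $\omega_t\le C\tilde\omega_t$ with $C$ independent of $t$, and the local upper bound $\tilde\omega_t\le C_K\omega_t$ on $K$ then follows from the volume-form comparison there.
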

\vskip 3mm

Our  proof of Theorem \ref{t1.1} is to show that $(M_{0}\backslash
S, g) $ has a  metric completion $(X, d_{X})$ satisfying the property
that both
$\{( M, \bar{g}_{s})\}$ and $\{(M_{t}, \tilde{g}_{t})\}$ converge to
$(X, d_{X})$ in the Gromov-Hausdorff topology  when $s\rightarrow 0$
and $t\rightarrow0$. The same method also proves  Theorem
\ref{t1.01+}.

As an application of Theorem \ref{t1.1} and Theorem \ref{t1.01+}, we
shall explore the path connectedness
 properties of certain class of  Ricci-flat Calabi-Yau threefolds.
Inspired by string theory in physics, some physicists made a
 projective version  of Reid's fantasy (cf.  \cite{CGH} \cite{GH}
 and
\cite{Ro}), the so-called connectedness conjecture,  which is
formulated more precisely  in \cite{Gro} (See also \cite{Gro0}).
This conjecture says that there is a huge connected  web $ \Gamma$
such that   nodes of $ \Gamma$ consist of   all deformation classes
of Calabi-Yau threefolds, and two nodes are connected
$\mathfrak{D}_{1}- \mathfrak{D}_{2}$ if $\mathfrak{D}_{1}$ and $
\mathfrak{D}_{2}$ are related by an extremal transition, i.e.,  there
is a Calabi-Yau 3-variety $M_{0}$ that  admits a crepant resolution
$\bar{M}\in \mathfrak{D}_{1}$ and a smoothing $(\mathcal{M}, \pi)$
  satisfying
$\pi^{-1}(t)=M_{t}\in \mathfrak{D}_{2}$ for any $ t\in
\Delta\backslash\{0\}$.  It was shown in \cite{GH}, \cite{CGGK},
\cite{ACJM} and \cite{Gro} that many Calabi-Yau threefolds are
connected to each other in the above sense.
 By combining the connectedness conjecture
and  Theorem \ref{t1.1} and Theorem \ref{t1.01+}, we reach a metric
version of connectedness conjecture as follows: if $
\mathfrak{CY}_{3}$ denotes  the set of Ricci-flat Calabi-Yau
threefolds $(M,g) $ with volume 1, then  the closure $
\overline{\mathfrak{CY}}_{3}$ of $ \mathfrak{CY}_{3}$ in
$(\mathfrak{X},d_{GH}) $ is path connected, i.e.,  for any two
points  $p_{1}$ and $p_{2}\in \overline{\mathfrak{CY}}_{3}$, there
is a  path
$$\gamma:[0,1] \longrightarrow \overline{\mathfrak{CY}}_{3}\subset
(\mathfrak{X},d_{GH})$$ such that $ p_{1}=\gamma(0)$ and
$p_{2}=\gamma(1)$.

Given  a class   of Calabi-Yau 3-manifolds known  to be connected by
extremal transitions and flops in algebraic geometry, Theorem
\ref{t1.1} and Theorem \ref{t1.01+} can be used to show that the
closure of the  class of  Calabi-Yau 3-manifolds
   is path connected in $(\mathfrak{X},d_{GH}) $. In the minimal
   model program, it was proved  that  for any two Calabi-Yau
   3-manifolds $M$ and $M'$  birational to each other, there is a sequence of
   flops connecting  $M$ and $M'$ (cf.  \cite{Kol}  \cite{KM}).  In
   \cite{GH}, it was shown  that all complete  intersection  Calabi-Yau
   manifolds  (CICY) of dimension 3 in  products  of
  projective
 spaces  are connected by conifold  transitions. Furthermore,
   in  \cite{ACJM} and  \cite{CGGK}   a large number of complete
intersection  Calabi-Yau
   3-manifolds in toric varieties  were verified  to be
   connected
   by extremal transitions, which include
   Calabi-Yau hypersurfaces in all  toric manifolds obtained by resolving
     weighted projective 4-spaces.   As a corollary  of Theorem \ref{t1.1} and Theorem \ref{t1.01+},  we
 obtain the following result.

 \vskip 3mm
 \begin{corollary}\label{c1.2} For any Calabi-Yau manifold $M$, let  $$\mathfrak{M}_{M}=\{(M,g)\in\mathfrak{X}| \  g \  is \ a \ Ricci-flat \
  K\ddot{a}hler  \  metric \  on  \ M \  with  \  {\rm Vol}_{g}(M)=1
  \}.
  $$ \begin{itemize}
  \item[i)] If $M$ is a three-dimensional Calabi-Yau manifold, and
   $$\mathfrak{BM}_{M}=\bigcup_{
  all \ Calabi-Yau \ manifolds \ M' \  birational \ to \ M } \mathfrak{M}_{M'}
 ,$$ then  the closure $\overline{\mathfrak{BM}}_{M}$ of
 $\mathfrak{BM}_{M}$ in $(\mathfrak{X}, d_{GH}) $ is path connected.
  \item[ii)]  Let $$\mathfrak{CP}=\bigcup_{
  all \ CICY \ 3-manifolds   \  M'  \ in \ products \ of \
  projective \
 spaces
 } \mathfrak{M}_{M'}
 .$$ Then  the closure $\overline{\mathfrak{CP}}$ of
 $\mathfrak{CP}$ in $(\mathfrak{X}, d_{GH}) $ is path connected.
 \item[iii)]  There is a path connected component  $ \overline{\mathfrak{CT}}$ of $ \overline{\mathfrak{CY}}_{3}\subset
 (\mathfrak{X},d_{GH})$ such that $\mathfrak{CP}\subset \overline{\mathfrak{CT}}$, and
 $\overline{\mathfrak{CT}}$ contains all $(M,g)$,  where  $M$ is a   Calabi-Yau hypersurface  in
 a
 toric 4-manifold   obtained by resolving a  weighted projective
 4-space, and  $g$ is a Ricci-flat K\"ahler metric of volume 1 on $M$.
 \end{itemize}
 \end{corollary}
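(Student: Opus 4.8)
\textbf{Proof plan for Corollary~\ref{c1.2}.}

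The strategy is to isolate a single ``Transition Lemma'' about two Calabi-Yau manifolds joined by one extremal transition or flop, and then to chain it along the algebraic-geometric webs supplied by \cite{Kol,KM,GH,ACJM,CGGK}. Two preliminary facts are needed. First, for any Calabi-Yau $M$ the set $\mathfrak{M}_{M}\subset(\mathfrak{X},d_{GH})$ is path connected: the K\"ahler cone $\mathcal{K}_{M}\subset H^{1,1}(M,\mathbb{R})$ is an open convex cone, Yau's theorem attaches to each $\alpha\in\mathcal{K}_{M}$ a unique Ricci-flat metric $g_{\alpha}$ depending smoothly on $\alpha$ (implicit function theorem for the complex Monge--Amp\`ere equation), the rescaling $g_{\alpha}\mapsto\Vol_{g_{\alpha}}(M)^{-1/n}g_{\alpha}$ is continuous, and $C^{\infty}$-convergence of metrics on a fixed manifold implies $d_{GH}$-convergence, so $\mathcal{K}_{M}\to\mathfrak{X}$ is continuous with image $\mathfrak{M}_{M}$. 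Second, the \emph{Transition Lemma}: if $\bar M\to M_{0}\rightsquigarrow\tilde M$ (resp. $\bar M_{1}\dashrightarrow\bar M_{2}$) is an extremal transition (resp. flop) of Calabi-Yau $n$-manifolds and $M_{0}$ carries an ample line bundle $\mathcal{L}$ and satisfies the hypotheses of Theorem~\ref{t1.1} (resp. Theorem~\ref{t1.01+}), then there is a continuous path $\gamma\colon[0,1]\to\mathfrak{X}$ with endpoints in the sets $\mathfrak{M}$ of the two manifolds, whose image lies in the union of those two $\mathfrak{M}$'s together with the single point $(X,d_{X})$ of Theorem~\ref{t1.1} (resp. \ref{t1.01+}) rescaled to unit volume; moreover this point lies in the closure of each of the two $\mathfrak{M}$'s.

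To prove the Transition Lemma one joins an interior K\"ahler class to the wall class $\bar\pi^{*}c_{1}(\mathcal{L})$ (which lies on $\partial\mathcal{K}$, as recalled in the excerpt) by a straight segment; for $s\in(0,1]$ this produces K\"ahler classes $\alpha_{s}\to\bar\pi^{*}c_{1}(\mathcal{L})$, hence Ricci-flat metrics $\bar g_{s}$, which we rescale to volume $1$. By Theorem~\ref{t1.1}/\ref{t1.01+}, and on the smoothing side also Theorem~\ref{t4.2+} (whose uniform diameter bound guarantees the Gromov--Hausdorff limits are compact), both the resolution family and, in the transition case, the smoothing family converge after the same normalization to the rescaled $(X,d_{X})$; the two parametrized families are $d_{GH}$-continuous on $(0,1]$ (smooth dependence of the Monge--Amp\`ere solution on the K\"ahler class and on the smoothing parameter, upgraded to $d_{GH}$-continuity via the uniform geometry) and extend continuously to $[0,1]$ by assigning them the rescaled $(X,d_{X})$ at $0$; concatenating one reversed with the other yields $\gamma$.

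Now assemble the corollary. For (i), every Calabi-Yau threefold birational to $M$ has trivial canonical bundle, hence is a minimal model, so by \cite{Kol,KM} any two are joined by a finite chain of flops; each flop $\bar M_{1}\dashrightarrow\bar M_{2}$ factors through a contraction to a projective Calabi-Yau variety $M_{0}$ (normal, Gorenstein terminal singularities, $\mathcal{K}_{M_{0}}\cong\mathcal{O}_{M_{0}}$, simply connected, and polarized), so Theorem~\ref{t1.01+} and the Transition Lemma apply; chaining, $\mathfrak{BM}_{M}$ together with the intervening limit spaces forms a path-connected subset of $\overline{\mathfrak{BM}}_{M}$. Part (ii) is identical with ``birational'' replaced by ``connected by a finite chain of conifold transitions'' (Green--H\"ubsch \cite{GH}): for a CICY conifold the smoothing inside the fixed ambient product of projective spaces has trivial relative canonical bundle by adjunction and $M_{0}$ is polarized, so Theorem~\ref{t1.1} and the Transition Lemma apply. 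For (iii) only membership in one path-component is required, and no closure issue arises: by \cite{ACJM,CGGK} the CICY threefolds in products of projective spaces and all Calabi-Yau hypersurfaces in toric $4$-manifolds obtained by resolving weighted projective $4$-spaces are mutually connected by extremal transitions and flops, whose $M_{0}$'s — Calabi-Yau hypersurfaces in Gorenstein toric Fano varieties with MPCP crepant resolutions — satisfy the hypotheses of Theorems~\ref{t1.1}--\ref{t1.01+}; chaining the Transition Lemma places all the corresponding volume-$1$ Ricci-flat Calabi-Yau spaces in one path-connected subset $P\subset\overline{\mathfrak{CY}}_{3}$, and we take $\overline{\mathfrak{CT}}$ to be the (unique) path-component of $\overline{\mathfrak{CY}}_{3}$ containing $P$.

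The remaining, and hardest, point is the passage to the closure in (i) and (ii): one must show that the closure of the path-connected set constructed above is still path connected, i.e.\ that every Gromov--Hausdorff limit point $p$ of volume-$1$ Ricci-flat metrics on models birational to $M$ (resp. conifold-transition-related to $M$) is the endpoint of a continuous path reaching $\mathfrak{BM}_{M}$ (resp. $\mathfrak{CP}$). The plan is to argue that such a $p$ — a compact metric space of bounded diameter — is isometric to the metric completion of a Ricci-flat K\"ahler metric on $N_{0}\backslash S$ for some Calabi-Yau variety $N_{0}$ admitting a crepant resolution $M'$ birational to (resp. transition-related to) $M$ and an ample line bundle: the boundary classes of $\mathcal{K}_{M'}$ that actually contribute points of $\mathfrak{X}$ are precisely the big-and-nef ones, namely pullbacks of ample classes from crepant birational contractions, since classes on lower-dimensional faces force collapsing and diameter blow-up under the volume normalization, while under a flop the Ricci-flat metrics agree away from the flopping loci, so $p$ is independent of the chosen birational model. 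Given this description, the segment of K\"ahler classes on $M'$ from an interior class to $\bar\pi^{*}c_{1}$ of the ample bundle on $N_{0}$, fed into Theorem~\ref{t1.1}, realizes $p$ as the continuous limit of a path lying in $\mathfrak{M}_{M'}\subset\mathfrak{BM}_{M}$; hence every point of $\overline{\mathfrak{BM}}_{M}$ is joined within $\overline{\mathfrak{BM}}_{M}$ to a fixed basepoint, and $\overline{\mathfrak{BM}}_{M}$ is path connected. Proving this boundary-behaviour statement for the family of Ricci-flat metrics over the K\"ahler cone — combining Theorem~\ref{t1.1}, the non-collapsing and structure theory in the Calabi-Yau setting, and the behaviour of the metrics under flops — is the main obstacle; everything else is a formal consequence of Theorems~\ref{t1.1}, \ref{t1.01+}, \ref{t4.2+} together with the cited algebraic geometry.
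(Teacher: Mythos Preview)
Your core strategy matches the paper's: establish that each $\mathfrak{M}_{M}$ is path connected (via the K\"ahler cone and smooth dependence of the Ricci-flat metric), extract from Theorems~\ref{t1.1} and~\ref{t1.01+} a ``Transition Lemma'' producing a continuous path through the common limit $(X,d_X)$, and then chain along the algebraic-geometric webs supplied by \cite{Kol,KM} for (i), \cite{GH} for (ii), and \cite{ACJM,CGGK} for (iii). The paper proceeds exactly this way, and in addition spells out one verification you pass over quickly: that the intermediate contracted varieties $M_{0,j}$ in a flop sequence are genuinely Calabi-Yau varieties (canonical singularities and trivial canonical sheaf, the latter via \cite{Kaw}), so that Theorem~\ref{t1.01+} actually applies. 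The paper also records a second preliminary fact you omit, namely that for a \emph{smooth} family over a connected base the union $\bigcup_t \mathfrak{M}_{M_t}$ is already path connected; this is what lets one move freely within a deformation class before invoking the transition step.

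Where you diverge sharply is the closure step. You label it ``the remaining, and hardest, point'' and outline an ambitious programme: classify every Gromov--Hausdorff limit of volume-normalized Ricci-flat metrics on birational models as the metric completion of a singular Ricci-flat K\"ahler space on some crepant contraction $N_0$, invoke flop-invariance of the limit, and then run the segment argument backwards. You yourself flag this as the main obstacle and do not complete it. The paper does nothing of the sort. It simply asserts, after applying Theorem~\ref{t1.01+}, that $\overline{\mathfrak{M}}_{M_j}\cup\overline{\mathfrak{M}}_{M_{j+1}}$ is path connected and declares (i) proved; parts (ii) and (iii) are handled in the same spirit. In other words, the paper treats the passage to the closure as immediate once the individual $\mathfrak{M}_{M_j}$ are path connected and adjacent ones share the limit point $(X,d_X)$. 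Your concern that the closure of a path-connected set need not be path connected is mathematically legitimate, and the paper offers no explicit argument for why each $\overline{\mathfrak{M}}_{M_j}$ is itself path connected; but to reproduce the paper's proof you should drop the limit-point classification entirely, since the paper neither attempts it nor regards it as necessary.
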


 \vskip 3mm

 The study of metric behaviors  under some algebraic geometric surgeries also
arises from
 other perspectives,  such as  K\"ahler-Ricci flow  (cf. \cite{ST}  \cite{SW0} \cite{SW} and \cite{SY}) and balanced
 metrics  on non-K\"ahler Calabi-Yau  threefolds   (cf. \cite{FLY} ).

The rest of the paper is organized as follows: In Section 2, we
bound from above of diameters of Ricci-flat Calabi-Yau manifolds
along a  smoothing.  In Section 3, we prove Theorem \ref{t4.2+}.
 In Section 4, we establish a link between   point-wise
 $C^{\infty}$-convergence of Riemannian  metrics  on a  `big' open subset
  and global  Gromov-Hausdorff convergence. In Section 5, we prove
  Theorem \ref{t1.1},
Theorem \ref{t1.01+} and Corollary \ref{c1.2}.  In Appendix A, we
supply basic properties on Gromov-Hausdorff convergence used in
Section 4. In Appendix B (written by Mark Gross), some bounds for
volumes of Calabi-Yau manifolds along a smoothing are provided which
are used in the proof of Theorem \ref{t4.2+}.

\vskip 7mm

\noindent {\bf Acknowledgement:}   The authors  are  grateful to
 Professor Mark Gross for writing Appendix B and very   helpful discussions and suggestions.  Some results of this
 paper were  obtained during the second author's visiting of  University of California, San  Diego. The second
 author  would like to  thank Professor  M. Gross and mathematics department of UCSD
 for the warm
 hospitality.  The second
 author  would like to  thank  Professor Wei-Dong Ruan for
 discussions with him  during the second
 author's  serving  a postdoc position  in  Korea Advanced Institute of Science and
 Technology which  contribute  some arguments used in Section 5.

 \vskip 10mm

  \section{A Priori  Estimate}
\vskip2mm

In this section, we obtain an estimate  for diameters of Ricci-flat
Calabi-Yau manifolds along a  smoothing, which plays a key role in
our  $C^{0}$-estimate in  the proof of Theorem \ref{t4.2+}.

  \vskip2mm
    \begin{theorem}\label{t1.1+}  Let  $M_{0}$ be a projective  $n$-dimensional
variety  with singular
set $S$. Assume that $M_{0}$  admits  a smoothing $\pi:
\mathcal{M}\rightarrow \Delta$ over the unit disc $\Delta\subset
\mathbb{C}$ such that   $\mathcal{M}$ admits an ample line bundle
$\mathcal{L}$, and the relative canonical bundle is trivial, i.e.,
$\mathcal{K}_{\mathcal{M}/\Delta}\cong \mathcal{O}_{\mathcal{M}}$.
 Let   $ \Omega_{t}$ be  a relative holomorphic volume form, i.e.,
   a   nowhere vanishing  section of  $ \mathcal{K}_{\mathcal{M}/\Delta}$,
   and let   $\tilde{g}_{t}$ be   the unique Ricci-flat K\"{a}hler  metric with
K\"{a}hler form $\tilde{\omega}_{t}\in
c_{1}(\mathcal{L})|_{M_{t}}\in H^{1,1}(M_{t}, \mathbb{R})$, for
$t\in \Delta\backslash \{0\}$. Then the  diameter of $(M_{t},
\tilde{g}_{t}) $ satisfies  that
      $${\rm diam}_{\tilde{g}_{t}}(M_{t})\leq 2+ D(-1)^{\frac{n^{2}}{2}} \int_{M_{t}} \Omega_{t}\wedge  \bar{\Omega}_{t} ,
    $$ where   $D$ is  a constant  independent of $t$.
    \end{theorem}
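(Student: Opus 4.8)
\emph{Strategy and set--up.} The plan is to estimate the diameter by comparing $\tilde g_{t}$ with the restriction to $M_{t}$ of one fixed Fubini--Study metric of the ambient space and to convert a $t$--uniform cohomological bound into a metric bound by means of the complex Monge--Amp\`ere equation expressing Ricci--flatness; since $\Ric(\tilde g_{t})=0$ by itself does not bound diameters from above, the polarisation $\mathcal{L}$ must enter essentially. Concretely, using $\mathcal{L}$ fix an embedding $\mathcal{M}\hookrightarrow\mathbb{CP}^{N}\times\Delta$ with $\mathcal{L}^{m}=\mathcal{O}(1)|_{\mathcal{M}}$, put $\omega_{0,t}:=\frac1m\,\omega_{FS}|_{M_{t}}$, a K\"ahler form in $c_{1}(\mathcal{L})|_{M_{t}}=[\tilde\omega_{t}]$, and write $\tilde\omega_{t}=\omega_{0,t}+\sqrt{-1}\,\partial\overline{\partial}\varphi_{t}$ with $\sup_{M_{t}}\varphi_{t}=0$. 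By flatness of $\pi$ the volume $V:=\int_{M_{t}}\tilde\omega_{t}^{\,n}/n!=(c_{1}(\mathcal{L})|_{M_{t}})^{n}/n!$ is independent of $t$, and $\Ric(\tilde\omega_{t})=0$ is equivalent to $\tilde\omega_{t}^{\,n}=\frac{n!\,V}{W_{t}}(-1)^{n^{2}/2}\Omega_{t}\wedge\bar\Omega_{t}$, where $W_{t}:=(-1)^{n^{2}/2}\int_{M_{t}}\Omega_{t}\wedge\bar\Omega_{t}$; equivalently $|\Omega_{t}|^{2}_{\tilde g_{t}}\equiv W_{t}/V$ is constant, because the holonomy of $\tilde g_{t}$ lies in $SU(n)$.

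\emph{Reduction to a path estimate.} All $M_{t}$ are subvarieties of $\mathbb{CP}^{N}$ of the one fixed degree $d=m^{n}n!\,V$, and an irreducible projective variety of degree $d$ has intrinsic Fubini--Study diameter $\le C_{0}(N,d)$ (join two points by a connected linear curve section of degree $d$ and bound its intrinsic diameter by degree and genus using Gauss--Bonnet, the induced Gauss curvature being bounded above); hence ${\rm diam}_{\omega_{0,t}}(M_{t})\le C_{0}$ for all $t$. It therefore suffices, for arbitrary $p,q\in M_{t}$, to bound the $\tilde g_{t}$--length of an $\omega_{0,t}$--minimising path $\gamma$ from $p$ to $q$. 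Fixing a small ball $B\subset\mathbb{CP}^{N}$ containing $S$, split $\gamma$ into the portion $\gamma'\subset M_{t}\setminus B$ and the portion $\gamma''\subset M_{t}\cap B$.

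\emph{The two regions.} On the fixed compact set $M_{t}\setminus B$ the central fibre is smooth, so $\omega_{0,t}$ and the density of $(-1)^{n^{2}/2}\Omega_{t}\wedge\bar\Omega_{t}$ with respect to $\omega_{0,t}^{n}$ converge smoothly as $t\to0$; the usual second order (Aubin--Yau / Yau--Schwarz lemma) estimate for the Monge--Amp\`ere equation, localised away from $B$, then controls $\tr_{\omega_{0,t}}\tilde\omega_{t}$ there by a $t$--uniform quantity (a bounded multiple of a power of $W_{t}$, the coefficient $n!V/W_{t}$ re--entering through $|\Omega_{t}|^{2}_{\tilde g_{t}}=W_{t}/V$), giving ${\rm length}_{\tilde g_{t}}(\gamma')\le 2+D_{1}W_{t}$ after renaming constants. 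For $\gamma''\subset M_{t}\cap B$ the pointwise estimate is unavailable --- near $S$ the second fundamental form of $M_{t}\subset\mathbb{CP}^{N}$ blows up, $\varphi_{t}$ is not uniformly bounded, and the Kolodziej / Eyssidieux--Guedj--Zeriahi and $C^{2}$ estimates degenerate --- so instead I would bound ${\rm length}_{\tilde g_{t}}(\gamma'')$ from integral data: by Cauchy--Schwarz along the path combined with a co--area argument over a family of $\omega_{0,t}$--short paths inside $B$, one replaces the uncontrolled pointwise size of $\tr_{\omega_{0,t}}\tilde\omega_{t}$ along a suitably chosen $\gamma''$ by its average over $M_{t}\cap B$, which by the Monge--Amp\`ere equation and the identity $\int_{M_{t}}\tr_{\omega_{0,t}}\tilde\omega_{t}\,\omega_{0,t}^{n}=n\int_{M_{t}}\tilde\omega_{t}\wedge\omega_{0,t}^{n-1}=n\,(c_{1}(\mathcal{L})|_{M_{t}})^{n}$ gets tied to $\int_{M_{t}}\Omega_{t}\wedge\bar\Omega_{t}$, while the small Fubini--Study volume of $M_{t}\cap B$ (Lelong monotonicity for bounded degree) furnishes the remaining smallness; this yields ${\rm length}_{\tilde g_{t}}(\gamma'')\le D_{2}W_{t}$. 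Adding the two contributions gives ${\rm diam}_{\tilde g_{t}}(M_{t})\le 2+DW_{t}$ with $D$ depending only on $\mathcal{M}$, $\mathcal{L}$, the embedding and the section $\Omega$, not on $t$.

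\emph{Main obstacle.} The hard part will be exactly the estimate over $M_{t}\cap B$: with no uniform bound on the potential or on the ambient second fundamental form, one must deduce a length bound from integral (cohomological) quantities, and the delicate point is to arrange the co--area / Cauchy--Schwarz mechanism so that the one--dimensional integral of $\tr_{\omega_{0,t}}\tilde\omega_{t}$ along the chosen path near $S$ is genuinely dominated by its $n$--dimensional average over $M_{t}\cap B$ --- which is precisely why the right--hand side carries the factor $\int_{M_{t}}\Omega_{t}\wedge\bar\Omega_{t}$ rather than an absolute constant.
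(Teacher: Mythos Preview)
Your proposal has a genuine circularity problem in the ``good region'' step. You want to invoke a localised Aubin--Yau or Chern--Lu $C^{2}$ estimate away from $B$ to control $\tr_{\omega_{0,t}}\tilde\omega_{t}$ there. But both of these estimates require a global $C^{0}$ bound on the potential $\varphi_{t}$: the Chern--Lu inequality gives $\tr_{\tilde\omega_{t}}\omega_{0,t}\le 2n\,e^{2\bar R(\varphi_{t}-\varphi_{t}(x_{\max}))}$ only after applying the \emph{global} maximum principle, and Yau's Laplacian estimate likewise carries $\|\varphi_{t}\|_{C^{0}}$. There is no purely local $C^{0}$ estimate for Monge--Amp\`ere that does not feed on global data. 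In the paper's logical order the $C^{0}$ bound (Lemma~3.1) is obtained by Moser iteration, which needs a uniform Sobolev constant, which needs the diameter bound you are trying to prove. So appealing to $C^{2}$ control, even far from $S$, presupposes the conclusion.

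The paper avoids this entirely by never seeking pointwise control of $\tilde\omega_{t}$. Its mechanism is volume comparison rather than path--length estimation. Fix one smooth point $p\in M_{0}\setminus S$ and a product chart $\Delta^{1}\times\Delta^{n}$ around it with $\{t\}\times\Delta^{n}\subset M_{t}$. A Demailly--Peternell--Schneider lemma (essentially your co--area/Cauchy--Schwarz idea, but used to produce a \emph{set} rather than a single path) shows that the average of $\mathrm{length}_{\tilde g_{t}}([x_{1},x_{2}])^{2}$ over $\Delta^{n}\times\Delta^{n}$ is bounded by $\int_{M_{t}}\tilde\omega_{t}\wedge\omega_{t}^{n-1}$, a cohomological constant; Fubini then yields an open $U_{t,\delta}\subset\Delta^{n}$ of $g_{t}$--volume at least $\tfrac12\Vol_{g_{t}}(\Delta^{n})$ and $\tilde g_{t}$--diameter at most a fixed $r$. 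On this set, which lies in a compact part of $\mathcal{M}\setminus S$, one has $(-1)^{n^{2}/2}\Omega_{t}\wedge\bar\Omega_{t}\ge\kappa_{U}\,\omega_{t}^{n}$ with $\kappa_{U}$ independent of $t$, so the Monge--Amp\`ere equation gives $\Vol_{\tilde g_{t}}(B_{\tilde g_{t}}(p_{t},r))\ge C\,e^{\sigma_{t}}$ with $e^{\sigma_{t}}=n!V/W_{t}$. Bishop--Gromov transfers this to $B_{\tilde g_{t}}(p_{t},1)$, and then the elementary Ricci--nonnegative lemma $\Vol(B(p,2R+2))/\Vol(B(p,1))\ge (R-1)/2n$ (Schoen--Yau, Paun) with $R=\tfrac12\,{\rm diam}$ yields ${\rm diam}_{\tilde g_{t}}(M_{t})\le 2+8n\,V/\Vol_{\tilde g_{t}}(B_{\tilde g_{t}}(p_{t},1))\le 2+D\,e^{-\sigma_{t}}=2+D\,W_{t}/(n!V)$. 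The point is that the only metric information used is a \emph{lower} bound on the $\tilde g_{t}$--volume of one ball of fixed radius, and this comes from the Monge--Amp\`ere equation plus a lower bound on $\Omega_{t}\wedge\bar\Omega_{t}/\omega_{t}^{n}$ on a fixed compact set in the smooth locus --- no $C^{0}$, no $C^{2}$, no path through the bad region.
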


\vskip2mm
 \begin{proof}
 Recall   that   $\mathcal{M}$
is an   $(n+1)$-dimensional variety  with  an embedding
$\mathcal{M}\hookrightarrow \mathbb{CP}^{N}\times \Delta$ such that
$\mathcal{L}^{m}=\mathcal{O}_{\Delta}(1)|_{\mathcal{M}}$ for an
$m\geq 1$,
 $\pi$ is
   the restriction to $\mathcal{M}$   of
 the projection from $\mathbb{CP}^{N}\times \Delta $ to $\Delta$,
 which is
a proper surjection such that    the rank of $\pi_{*}$ is 1 on $
\mathcal{M}\backslash S$. Then    $M_{t}=\pi^{-1}(t)$ is a smooth
Calabi-Yau manifold for any $t\in \Delta\backslash \{0\}$.
 Denote
$$\omega_{t}=\frac{1}{m}\omega_{FS}|_{M_{t}},$$
 where $\omega_{FS}$ is the standard
Fubini-Study metric on $\mathbb{CP}^{N}$, and  $g_{t}$ is  the
corresponding K\"{a}hler  metric of $ \omega_{t} $. Note that $
\tilde{\omega}_{t}$ satisfies the Monge-Amp\`{e}re equation
\begin{equation}\label{e001.01+} \tilde{\omega}_{t}^{n}=(-1)^{\frac{n^{2}}{2}}e^{\sigma_{t}}\Omega_{t}\wedge
\overline{\Omega}_{ t}, \ \ {\rm where} \ \
e^{\sigma_{t}}=V\left((-1)^{\frac{n^{2}}{2}}\int_{M_{t}}\Omega_{t}\wedge
\overline{\Omega}_{ t}\right)^{-1},
\end{equation} where $V=n!{\rm Vol}_{g_{t}}(M_{t})$  is a
constant independent of $t$.

  For  $p\in M_{0}\backslash S$, there are coordinates $z_{0}, \cdots,  z_{n}$ on  a
 neighborhood $U$ of $p$ in $ \mathcal{M}$ such that $t=\pi(z_{0}, \cdots,
 z_{n})=z_{0}$ and $p=(0, \cdots, 0)$. There is a $r_{0}>0$ such that $ \Delta^{1}\times
 \Delta^{n}\subset U$,  where $\Delta^{1}=\{|t|< r_{0}\}\subset
 \Delta$,  $\Delta^{n}=\{|z_{j}|< r_{0},  j =1, \cdots, n\}\subset \mathbb{C}^{n}$, and $\{t\}\times \Delta^{n} \subset
 M_{t} $.
   Note that locally
   $\omega_{t}$ and $\tilde{\omega}_{t}$ are families of K\"{a}hler
   forms on $\Delta^{n}\subset \mathbb{C}^{n}$, and there is a constant $C_{1}$ independent of
   $t$ such that
   \begin{equation}\label{e001.1+} C_{1}^{-1}\omega_{E} \leq\omega_{t}\leq
   C_{1} \omega_{E}, \ \
   \end{equation}  where $ \omega_{E}=\sqrt{-1}\partial\bar{\partial}\sum\limits_{i=1}^{n}
    |z_{i}|^{2}$ is the standard Euclidean K\"{a}hler form on $\Delta^{n} $, and $g_{E}$ denotes the corresponding
   Euclidean K\"{a}hler  metric.  We need the following fact, which
      is a simplified  version of   Lemma 1.3 in
\cite{DPS}.    For completeness, we shall  sketch a proof.

\vskip2mm

 \begin{lemma}[Lemma 1.3 in
\cite{DPS}]\label{cl1.2+}  For any  $ \delta >0$, and any $t\in
\Delta^{1}\backslash\{0\}$, there is an
 open subset  $U_{t,\delta}$ of $\Delta^{n}$
  such that $$
{\rm Vol}_{g_{t}}(U_{t,\delta})\geq {\rm
Vol}_{g_{t}}(\Delta^{n})-\delta, \ {\rm  \ } \ {\rm
diam}_{\tilde{g}_{t}}(U_{t,\delta}) \leq
\hat{C}\delta^{-\frac{1}{2}},$$ where $\hat{C}$ is  a constant
independent of $t$.
  \end{lemma}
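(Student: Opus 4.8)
The plan is to derive everything from a single uniform estimate: the trace of $\tilde{g}_{t}$ with respect to the reference Euclidean metric $g_{E}$ on $\Delta^{n}$ has $L^{1}(\Delta^{n})$-norm bounded independently of $t$. To obtain this I would first observe that, since $\tilde{\omega}_{t}$ and $\omega_{t}$ are both K\"ahler forms, $\tilde{\omega}_{t}\wedge\omega_{t}^{n-1}$ is a nonnegative $(n,n)$-form, while $[\tilde{\omega}_{t}]=c_{1}(\mathcal{L})|_{M_{t}}=[\omega_{t}]$ in $H^{1,1}(M_{t},\mathbb{R})$ (as $\mathcal{L}^{m}=\mathcal{O}_{\Delta}(1)|_{\mathcal{M}}$), so that
$$\int_{\Delta^{n}}\tilde{\omega}_{t}\wedge\omega_{t}^{n-1}\le\int_{M_{t}}\tilde{\omega}_{t}\wedge\omega_{t}^{n-1}=\int_{M_{t}}\omega_{t}^{n},$$
a topological quantity independent of $t$ (cf. the constant $V$ in \eqref{e001.01+}). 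Using $C_{1}^{-1}\omega_{E}\le\omega_{t}$ from \eqref{e001.1+} and the fact that a wedge product of semipositive $(1,1)$-forms is a semipositive $(n,n)$-form (telescoping $\tilde{\omega}_{t}\wedge(C_{1}\omega_{t})^{n-1}-\tilde{\omega}_{t}\wedge\omega_{E}^{n-1}$), one gets $\tilde{\omega}_{t}\wedge\omega_{E}^{n-1}\le C_{1}^{n-1}\,\tilde{\omega}_{t}\wedge\omega_{t}^{n-1}$ pointwise on $\Delta^{n}$, hence
$$\int_{\Delta^{n}}\tr_{g_{E}}\tilde{g}_{t}\,dV_{g_{E}}\le C_{2}$$
for a constant $C_{2}$ independent of $t$, since $\tr_{g_{E}}\tilde{g}_{t}$ is, up to a dimensional constant, the density of $\tilde{\omega}_{t}\wedge\omega_{E}^{n-1}$ relative to $\omega_{E}^{n}$.

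Next I would convert this $L^{1}$-bound into a bound on $\tilde{g}_{t}$-distances by an integral-geometric averaging. Since $\Delta^{n}$ is convex, for $x,y\in\Delta^{n}$ the Euclidean segment joining them stays in $\Delta^{n}$; as the largest eigenvalue of $\tilde{g}_{t}$ relative to $g_{E}$ is at most $\tr_{g_{E}}\tilde{g}_{t}$, the Cauchy--Schwarz inequality gives
$$\dist_{\tilde{g}_{t}}(x,y)^{2}\le|x-y|_{g_{E}}^{2}\int_{0}^{1}\bigl(\tr_{g_{E}}\tilde{g}_{t}\bigr)\bigl((1-s)x+sy\bigr)\,ds.$$
Integrating over $(x,y)\in\Delta^{n}\times\Delta^{n}$ and substituting $z=(1-s)x+sy$ --- eliminating $x$ when $s\le\tfrac12$ and $y$ when $s\ge\tfrac12$, so that the Jacobian stays bounded by $2^{2n}$ and the $s$-integral does not blow up at the endpoints --- yields
$$\iint_{\Delta^{n}\times\Delta^{n}}\dist_{\tilde{g}_{t}}(x,y)^{2}\,dV_{g_{E}}(x)\,dV_{g_{E}}(y)\le C(n)\int_{\Delta^{n}}\tr_{g_{E}}\tilde{g}_{t}\,dV_{g_{E}}\le C_{3},$$
with $C_{3}$ independent of $t$.

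Finally I would pass from this averaged estimate to a single large set of small diameter. Set $m_{t}(y)=\Vol_{g_{E}}(\Delta^{n})^{-1}\int_{\Delta^{n}}\dist_{\tilde{g}_{t}}(y,z)\,dV_{g_{E}}(z)$; this is a continuous function with $\int_{\Delta^{n}}m_{t}^{2}\,dV_{g_{E}}\le C_{3}/\Vol_{g_{E}}(\Delta^{n})=:C_{3}'$ by Jensen's inequality and the previous step, and it satisfies $\dist_{\tilde{g}_{t}}(x,y)\le m_{t}(x)+m_{t}(y)$ for all $x,y$ by averaging the triangle inequality over an auxiliary point. Let $\Lambda=(C_{1}^{n}C_{3}'/\delta)^{1/2}$ and put $U_{t,\delta}=\{y\in\Delta^{n}:m_{t}(y)<\Lambda\}$, which is open. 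By Chebyshev's inequality $\Vol_{g_{E}}(\Delta^{n}\setminus U_{t,\delta})\le C_{3}'/\Lambda^{2}=\delta/C_{1}^{n}$, and since $\omega_{t}\le C_{1}\omega_{E}$ gives $\Vol_{g_{t}}\le C_{1}^{n}\Vol_{g_{E}}$, this yields $\Vol_{g_{t}}(U_{t,\delta})\ge\Vol_{g_{t}}(\Delta^{n})-\delta$; while for $x,y\in U_{t,\delta}$ one has $\dist_{\tilde{g}_{t}}(x,y)<2\Lambda$, so $\mathrm{diam}_{\tilde{g}_{t}}(U_{t,\delta})\le 2(C_{1}^{n}C_{3}')^{1/2}\delta^{-1/2}=:\hat{C}\delta^{-1/2}$ with $\hat{C}$ independent of $t$ (distances measured in $M_{t}$ are no larger, so the bound holds a fortiori there).

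I expect the only genuinely delicate point to be the uniform bound $\int_{\Delta^{n}}\tr_{g_{E}}\tilde{g}_{t}\,dV_{g_{E}}\le C_{2}$ of the first step: this is exactly where the hypothesis that $\tilde{\omega}_{t}$ stays in the fixed class $c_{1}(\mathcal{L})|_{M_{t}}$ enters, via the constancy of the intersection number $\int_{M_{t}}\omega_{t}^{n}$ and the domination of a local mixed mass by a global one --- the statement genuinely fails without such a normalization. The integral-geometric estimate of the second step is routine once one remembers to split the $s$-integration at $\tfrac12$ to keep the change-of-variables Jacobian bounded, and the averaged-distance device in the last step is standard.
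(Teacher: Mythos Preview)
Your argument is correct and the first two steps---the uniform bound $\int_{\Delta^{n}}\tilde\omega_{t}\wedge\omega_{E}^{n-1}\le C_{2}$ coming from the cohomological constancy of $\int_{M_{t}}\tilde\omega_{t}\wedge\omega_{t}^{n-1}$, and the integral-geometric averaging (with the split at $s=\tfrac12$) giving $\iint\dist_{\tilde g_{t}}(x,y)^{2}\,dV_{g_{E}}dV_{g_{E}}\le C_{3}$---coincide exactly with the paper's proof, which in turn follows Demailly--Peternell--Schneider.

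The only genuine difference is in the passage from the $L^{2}$ bound on pairwise distances to a large subset of small diameter. The paper works with the ``bad-pair'' set $S_{t}=\{(x_{1},x_{2}):\leng_{\tilde g_{t}}[x_{1},x_{2}]^{2}>\bar C\delta^{-1}\}$, slices it into $S_{t}(x_{1})$, defines $Q_{t}$ as the set where the slice has at least half measure, and then for $x_{1},x_{2}\notin Q_{t}$ finds a common intermediate point $y$ by the pigeonhole observation that two sets each of measure $>\tfrac12\Vol(\Delta^{n})$ must intersect. You instead introduce the mean-distance function $m_{t}(y)=\Vol_{g_{E}}(\Delta^{n})^{-1}\int\dist_{\tilde g_{t}}(y,z)\,dV_{g_{E}}(z)$, observe the neat averaged triangle inequality $\dist_{\tilde g_{t}}(x,y)\le m_{t}(x)+m_{t}(y)$, control $\int m_{t}^{2}$ by Jensen, and apply Chebyshev. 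Both devices are standard and yield the same $\hat C\delta^{-1/2}$ scaling; yours is marginally cleaner (no need to track the bad set on the product), while the paper's stays closer to the DPS original and makes the ``find a good midpoint'' mechanism explicit.
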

\vskip2mm

\begin{proof}
Let  $dv_{E}=(-1)^{ \frac{n}{2}}dz^{1}\wedge d
\overline{z}^{1}\wedge \cdots \wedge dz^{n}\wedge d
\overline{z}^{n}$ be  the standard Euclidean volume form on
$\Delta^{n}$ and for any  $x_{1}, x_{2}\in \Delta^{n}$, let
$[x_{1},x_{2}]\subset \Delta^{n}$ be  the segment connecting $x_{1}$
and $ x_{2}$.
 By Fubini's Theorem, the Cauchy-Schwarz inequality and (\ref{e001.1+}),  we
 have  \begin{eqnarray*}& & \ \int_{\Delta^{n}\times
\Delta^{n}}{\rm length}_{\tilde{g}_{t}}([x_{1},x_{2}])^{2}dv_{E}
(x_{1})dv_{E} (x_{2})  \\ & \leq &  \|x_{2}-x_{1}
\|_{E}^{2}\int^{1}_{0}ds \int_{\Delta^{n}\times \Delta^{n}} {\rm tr
}_{\omega_{E}}\tilde{\omega}_{t}((1-s)x_{1}+sx_{2})dv_{E}
(x_{1})dv_{E} (x_{2})
\\ & \leq & 2^{2n}{\rm diam}^{2}_{g_{E}}(\Delta^{n}) {\rm Vol}_{g_{E}}(\Delta^{n})
\int_{\Delta^{n}}\tilde{\omega}_{t}\wedge \omega_{E}^{n-1}
\\ & \leq & C_{2}  \int_{\Delta^{n}}\tilde{\omega}_{t}\wedge \omega_{t}^{n-1}\\
& \leq & C_{2}\int_{M_{t}}\tilde{\omega}_{t}\wedge \omega_{t}^{n-1}
   =\bar{C},
\end{eqnarray*} where $\bar{C}$ is a constant  independent of $t$. The
second inequality is obtained by   integrating first with respect to
$y=(1-s)x_{1}$ when $s\leq \frac{1}{2}$, then  with
 respect to  $y=sx_{2}$ when $s\geq \frac{1}{2}$,   since  $dv_{E}
(x_{i})\leq 2^{2n} dv_{E} (y)$.
 If
$$S_{t}=\{ (x_{1},x_{2}) \in \Delta^{n}\times \Delta^{n} | {\rm
length}^{2}_{\tilde{g}_{t}}([x_{1},x_{2}])> \bar{C} \delta^{-1}
\},$$ then ${\rm Vol}_{g_{E}\times g_{E}}(S_{t})<  \delta.$ Let
$S_{t}(x_{1})=\{x_{2}\in \Delta^{n} |\ \  (x_{1},x_{2}) \in S_{t}
\}, $  and let   $ Q_{t}=\{ x_{1}\in \Delta^{n} |\ \  {\rm
Vol}_{g_{E}}(S_{t}(x_{1}))\geq \frac{1}{2}{\rm
Vol}_{g_{E}}(\Delta^{n})\}. $ By Fubini's Theorem,  $$ {\rm
Vol}_{g_{E}}(Q_{t})< 2\delta {\rm Vol}_{g_{E}}^{-1}(\Delta^{n}) , \
\ {\rm } \ \ {\rm Vol}_{g_{E}}(S_{t}(x_{j}))< \frac{1}{2}{\rm
Vol}_{g_{E}}(\Delta^{n}),$$ for any $x_{1},x_{2}\in
\Delta^{n}\backslash Q_{t}$. Thus $(\Delta^{n}\backslash
S_{t}(x_{1}) )\bigcap (\Delta^{n}\backslash S_{t}(x_{2}))$ is not
empty. If $y\in (\Delta^{n}\backslash S_{t}(x_{1})) \cap  (
\Delta^{n}\backslash S_{t}(x_{2}))$, then $ (x_{1},y), (x_{2},y)\in
(\Delta^{n}\times \Delta^{n}) \backslash S_{t}$, and
$$ {\rm length}^{2}_{\tilde{g}_{t}}([x_{1},y]\cup [y,x_{2}])\leq 2
\bar{C} \delta^{-1} ,$$ and therefore
$${\rm diam}_{\tilde{g}_{t}}^{2}(\Delta^{n}\backslash Q_{t})\leq 2
\bar{C} \delta^{-1} .  $$  If we denote
$U_{t,\delta}=\Delta^{n}\backslash Q_{t}$, then by (\ref{e001.1+})
we derive
$${\rm Vol}_{g_{t}}( \Delta^{n} \backslash U_{t,\delta})= {\rm
Vol}_{g_{t}}(Q_{t}) \leq C_{3}{\rm Vol}_{g_{E}}(Q_{t})<2C_{3} {\rm
Vol}_{g_{E}}^{-1}(\Delta^{n})\delta,$$ where $ C_{3}>0$ is   a
constant independent of $t$.   By replacing $\delta$ with  $(2C_{3}
)^{-1} {\rm Vol}_{g_{E}}(\Delta^{n})\delta$, we obtain the desired
conclusion.
\end{proof}

\vskip2mm

We return to the proof of Theorem  \ref{t1.1+}.   Let
$\delta_{t}=\frac{1}{2}{\rm Vol}_{g_{t}}(\Delta^{n})$, and let
$p_{t}\in U_{t,\delta_{t}}$. By
 (\ref{e001.1+}), we get
$$\delta_{t}\geq \frac{C_{4}}{2}{\rm Vol}_{g_{E}}(\Delta^{n})=\bar{\delta}$$
and thus  $U_{t,\delta_{t}}\subset B_{\tilde{g}_{t}}(p_{t},r)
 $, where  $
r=\max \{ 1, 2\hat{C} \bar{\delta}^{- \frac{1}{2}}\}$  and $\hat{C}$
is the constant in Lemma \ref{cl1.2+}. Since $U\subset
\mathcal{M}\backslash S$, there is a constant $\kappa_{U}>0$ such
that
$$(-1)^{\frac{n^{2}}{2}}\Omega_{t}\wedge \overline{\Omega}_{ t}\geq
\kappa_{U}\omega_{t}^{n}$$ on $U\cap M_{t}$.  By (\ref{e001.01+}),
we derive
 \begin{eqnarray*} {\rm Vol}_{\tilde{g}_{t}}(B_{\tilde{g}_{t}}(p_{t},r)) \geq
 {\rm Vol}_{\tilde{g}_{t}}(U_{t,\delta_{t}})  &
 = &
\frac{(-1)^{\frac{n^{2}}{2}}}{n!}e^{\sigma_{t}}\int_{U_{t,\delta_{t}}}\Omega_{t}\wedge
\overline{\Omega}_{ t}\\ & \geq &
\frac{\kappa_{U}e^{\sigma_{t}}}{n!}\int_{U_{t,\delta_{t}}} \omega_{t}^{n}\\
& = & \kappa_{U}e^{\sigma_{t}}{\rm Vol}_{g_{t}}(U_{t,\delta_{t}})\\
&\geq & \frac{\kappa_{U}e^{\sigma_{t}}}{2}{\rm
Vol}_{g_{t}}(\Delta^{n})\\ &\geq & C_{5}e^{\sigma_{t}}{\rm
Vol}_{g_{E}}(\Delta^{n})= C_{6}e^{\sigma_{t}},\end{eqnarray*} where
$C_{6}$ is  a constant  independent of $t$. By   Bishop-Gromov
relative volume  comparison, we obtain
$${\rm Vol}_{\tilde{g}_{t}}(B_{\tilde{g}_{t}}(p_{t},1))  \geq
\frac{1}{r^{2n}}{\rm
Vol}_{\tilde{g}_{t}}(B_{\tilde{g}_{t}}(p_{t},r))\geq
\frac{C_{6}}{r^{2n}}e^{\sigma_{t}}.
$$  In the rest of the proof, we need the following lemma.

\vskip2mm

\begin{lemma} [ Theorem 4.1 of Chapter 1 in \cite{SY2} and Lemma 2.3 in \cite{Pa}]\label{t3.03}
 Let $(M,g)$ be  a $2n$-dimensional  compact
Riemannian manifold with nonnegative  Ricci curvature. Then  for any
  $p\in M$ and any   $1< R< {\rm diam}_{g}(M)$, we have
$$ \frac{{\rm Vol}_{g}(B_{g}(p, 2R+2))}{{\rm Vol}_{g}(B_{g}(p, 1))}\geq
\frac{R-1}{2n}.$$\end{lemma}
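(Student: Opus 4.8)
The plan is to derive Lemma~\ref{t3.03} from the compact incarnation of Yau's linear volume growth estimate for manifolds with nonnegative Ricci curvature; write $m=2n=\dim_{\mathbb R}M$, and note that since $B_g(p,1)\subset B_g(p,2R+2)$ the asserted inequality is automatic once $R\le 2n+1$, so only large $R$ is at issue. The first step is to convert the hypothesis $R<{\rm diam}_g(M)$ into a long minimizing geodesic issuing from $p$: because $x\mapsto d_g(p,x)$ is continuous on the compact $M$ and ${\rm diam}_g(M)\le 2\max_x d_g(p,x)$, one either finds a point $q$ with $d_g(p,q)\ge R$ — and then fixes a unit-speed minimizing geodesic $\gamma\colon[0,L]\to M$, $\gamma(0)=p$, $\gamma(L)=q$, with $L\ge R$ (truncated if necessary so that $L\le 2R+2$) — or else finds two points $x_0,x_1$ realizing the diameter together with minimizing geodesics $\gamma_i$ from $p$ to $x_i$ of lengths $\ell_i$ satisfying $\ell_0+\ell_1\ge d_g(x_0,x_1)>R$; in every case the geodesic segments to be used have their far endpoints inside $B_g(p,2R+2)$. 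I describe the single-geodesic case.

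To $\gamma$ I attach the Busemann-type function $\beta(x):=L-d_g(x,q)$. It is $1$-Lipschitz with $|\nabla\beta|=1$ a.e., has $\beta(p)=0$ and $\beta(\gamma(s))=s$ for $s\in[0,L]$ (because $\gamma$ is minimizing), and by the Laplacian comparison theorem for $\Ric\ge 0$ satisfies, in the barrier/distributional sense,
$$\Delta_g\beta=-\Delta_g d_g(\cdot,q)\ \ge\ -\frac{m-1}{d_g(\cdot,q)}=-\frac{m-1}{L-\beta},$$
with the singular part of $\Delta_g\beta$ supported on the cut locus of $q$ being a nonnegative measure. Hence $\beta$ is \emph{almost subharmonic}: on a slab $\{0<\beta<s\}$ with $s$ bounded away from $L$ the defect $\tfrac{m-1}{L-\beta}$ is uniformly small — small precisely because $L$ is comparable to $R>1$.

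The core is then the integral argument from the proof of the linear volume growth theorem. For suitable $s$ one integrates the displayed inequality over $\Omega_s:=\{0<\beta<s\}\cap B_g(p,2R+2)$ — which is nonempty, containing a neighborhood of $\gamma((0,s))$, and lies inside $B_g(p,2R+2)$ — identifies $\int_{\Omega_s}\Delta_g\beta$, via the divergence theorem, with the flux of $\nabla\beta$ across the level sets $\{\beta=0\}$, $\{\beta=s\}$ (where $\partial_\nu\beta=\mp1$) together with the flux across the spherical part of $\partial\Omega_s$ lying on $\partial B_g(p,2R+2)$ (where $|\partial_\nu\beta|\le1$), and converts the resulting differential inequality for $s\mapsto{\rm Area}_g(\{\beta=s\}\cap B_g(p,2R+2))$, through the co-area formula, into a lower bound for ${\rm Vol}_g(\Omega_s)$ that grows linearly in $s$. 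Running $s$ over an interval of length $\gtrsim R$ and tracking the constants — the Laplacian-comparison constant $m-1=2n-1$, the unit width of the slab, and the factors of $2$ coming from ${\rm diam}$ versus $\max_x d_g(p,x)$, from keeping the slab away from a unit ball about $q$, and from the boundary flux, all of which the generous radius $2R+2$ is there to absorb — yields ${\rm Vol}_g(B_g(p,2R+2))\ge\frac{R-1}{2n}\,{\rm Vol}_g(B_g(p,1))$. In the two-geodesic case one runs this on each $\{0<\beta_i<s\}\cap B_g(p,2R+2)$ with $\beta_i=\ell_i-d_g(\cdot,x_i)$; the two slabs are essentially disjoint and their widths add to $\ell_0+\ell_1>R$.

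The step I expect to be the main obstacle is precisely this last one: making the flux/co-area computation produce the \emph{explicit} radius $2R+2$ and constant $\tfrac1{2n}$ rather than merely ``$\gtrsim R$'' and ``$\gtrsim 1$''. Concretely one must (i) absorb the Laplacian-comparison defect $\tfrac{m-1}{L-\beta}$ uniformly over the slab, which requires keeping $\Omega_s$ away from a neighborhood of $q$ and uses $L\gtrsim R>1$; (ii) bound the flux of $\nabla\beta$ through the portion of $\partial\Omega_s$ on $\partial B_g(p,2R+2)$ by ${\rm Vol}_g(B_g(p,2R+2))$ itself — this is what dictates the generous radius; and (iii) justify the divergence theorem across the cut locus of $q$, for which the nonnegativity of the singular part of $\Delta_g\beta$ is exactly what is needed. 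The remaining ingredients — Laplacian comparison, the co-area formula, and Bishop–Gromov relative volume comparison — are standard.
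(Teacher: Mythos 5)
The paper supplies no proof of this lemma; it is quoted verbatim from Schoen--Yau and from Pa\u{u}n, so there is no in-text argument to compare against. What I can compare against is the standard argument behind those references, which is considerably more elementary than what you propose: pick $q$ with $d_g(p,q)=R$ (possible because $R<\operatorname{diam}_g(M)$), observe that $B_g(p,1)\subset A(q,R-1,R+1)$, apply Bishop--Gromov relative volume comparison \emph{centered at $q$} to get
$$
\operatorname{Vol}_g\bigl(B_g(q,R+1)\bigr)\;\ge\;\frac{(R+1)^{2n}}{(R+1)^{2n}-(R-1)^{2n}}\,\operatorname{Vol}_g\bigl(A(q,R-1,R+1)\bigr)\;\ge\;\frac{(R+1)^{2n}}{(R+1)^{2n}-(R-1)^{2n}}\,\operatorname{Vol}_g\bigl(B_g(p,1)\bigr),
$$
and then use $B_g(q,R+1)\subset B_g(p,2R+1)\subset B_g(p,2R+2)$. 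This is a three-line proof and invokes nothing beyond relative volume comparison, which the paper is already using elsewhere. Your route --- a Busemann-type function $\beta=L-d_g(\cdot,q)$, the Laplacian comparison $\Delta\beta\ge-\frac{2n-1}{L-\beta}$ in the barrier sense, and a flux/co-area computation over the slabs $\{0<\beta<s\}$ --- is Yau's original linear-volume-growth mechanism, which is conceptually correct but much heavier machinery for what is being asked, and in particular introduces genuinely delicate issues (the singular part of $\Delta\beta$ on the cut locus, the flux through $\partial B_g(p,2R+2)$, the non-uniformity of the defect $\frac{2n-1}{L-\beta}$ near $q$) that the Bishop--Gromov route simply avoids.

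More to the point, the proposal is not yet a proof. You explicitly defer the only step that actually has mathematical content: ``making the flux/co-area computation produce the explicit radius $2R+2$ and constant $\frac1{2n}$.'' That is not an incidental tidying-up --- it is the entire assertion. The Laplacian-comparison set-up, the $1$-Lipschitz property of $\beta$, the nonnegativity of the singular part of $\Delta\beta$, the co-area formula: all of these are correctly identified, but they are the standard preamble. Without the quantitative bookkeeping you have sketched a heuristic, not proved the lemma. I would also flag that the two-geodesic branch of your case analysis is under-argued: you assert the two slabs $\{0<\beta_i<s\}$ are ``essentially disjoint,'' but $\gamma_0$ and $\gamma_1$ are only constrained by $\ell_0+\ell_1\ge d_g(x_0,x_1)>R$, which does not by itself force the slabs apart; one would need to control the excess $e(x)=d(x,x_0)+d(x,x_1)-d(x_0,x_1)$ to make that rigorous, and that is nontrivial extra work. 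Rewriting the argument along Bishop--Gromov lines, as above, sidesteps all of this and matches the cited sources.
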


\vskip2mm

 By letting $R=\frac{1}{2}{\rm diam}_{\tilde{g}_{t}}(M_{t})$, we
obtain $$ {\rm diam}_{\tilde{g}_{t}}(M_{t})\leq 2+ 8n \frac{{\rm
Vol}_{\tilde{g}_{t}}(M_{t})}{{\rm
Vol}_{\tilde{g}_{t}}(B_{\tilde{g}_{t}}(p_{t},1))}\leq 2+
De^{-\sigma_{t}},$$ where $D$ is  a constant  independent of $t$. We
 conclude the proof   by (\ref{e001.01+}).
  \end{proof}

   \vskip2mm
   The following  is a consequence of Theorem  \ref{t1.1+}  and  Theorem  \ref{B.1}.
  \vskip2mm
    \begin{corollary}\label{c1.1+}
    Let   $M_{0}$, $\mathcal{M}$,  $\mathcal{L}$,  $ \Omega_{t}$, and   $\tilde{g}_{t}$
     be  as in Theorem \ref{t1.1+}. If in addition  we assume  that $M_{0}$ is
     a Calabi-Yau $n$-variety,   then the  diameter of $(M_{t},
\tilde{g}_{t}) $ has  a uniform bound
      $${\rm diam}_{\tilde{g}_{t}}(M_{t})\leq  D,  $$ where $D$ is  a constant  independent of $t$.
    \end{corollary}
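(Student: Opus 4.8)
The plan is to read off the corollary directly from the inequality
$${\rm diam}_{\tilde{g}_{t}}(M_{t})\leq 2+ D(-1)^{\frac{n^{2}}{2}} \int_{M_{t}} \Omega_{t}\wedge  \bar{\Omega}_{t}$$
of Theorem \ref{t1.1+}, once we know that the right-hand side is bounded independently of $t$. Thus everything reduces to producing a constant $C>0$, independent of $t$, with $(-1)^{\frac{n^{2}}{2}}\int_{M_{t}}\Omega_{t}\wedge\bar{\Omega}_{t}\leq C$ for all $t$ in a punctured neighbourhood of $0$ (which, after shrinking $\Delta$ and relabelling, we take to be $\Delta\backslash\{0\}$); the constant in the corollary is then $2+DC$, with $D$ the constant of Theorem \ref{t1.1+}.

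First I would explain why the extra hypothesis that $M_{0}$ is a Calabi-Yau variety --- equivalently, that $M_{0}$ is Gorenstein with only canonical singularities and $\mathcal{K}_{M_{0}}\cong\mathcal{O}_{M_{0}}$ --- is exactly what keeps the fibre period finite in the limit. The relative volume form $\Omega$, a nowhere vanishing section of $\mathcal{K}_{\mathcal{M}/\Delta}\cong\mathcal{O}_{\mathcal{M}}$, restricts on $M_{0}$ to a holomorphic section $\Omega_{0}$ of $\mathcal{K}_{M_{0}}$. For any resolution $(\bar{M}_{0},\bar{\pi})$ of $M_{0}$, the canonical-singularity condition gives $\mathcal{K}_{\bar{M}_{0}}=\bar{\pi}^{*}\mathcal{K}_{M_{0}}+\sum_{E}a_{E}E$ with all $a_{E}\geq 0$, so $\bar{\pi}^{*}\Omega_{0}$ is a genuine holomorphic $n$-form on the compact manifold $\bar{M}_{0}$ (vanishing along $\sum_{E}a_{E}E$). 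Hence
$$(-1)^{\frac{n^{2}}{2}}\int_{M_{0}\backslash S}\Omega_{0}\wedge\bar{\Omega}_{0}
=(-1)^{\frac{n^{2}}{2}}\int_{\bar{M}_{0}}\bar{\pi}^{*}\Omega_{0}\wedge\overline{\bar{\pi}^{*}\Omega_{0}}<\infty.$$
This identifies the prospective limit of the fibre periods as a finite number, and shows that the obstruction to a uniform bound is purely a matter of continuity of $t\mapsto(-1)^{\frac{n^{2}}{2}}\int_{M_{t}}\Omega_{t}\wedge\bar{\Omega}_{t}$ across $t=0$.

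The remaining point --- that these fibre integrals do not blow up as $t\to 0$ --- is precisely the content of Theorem \ref{B.1} in Appendix B, which I would simply invoke: there is a neighbourhood $\Delta'\subseteq\Delta$ of $0$ on which $(-1)^{\frac{n^{2}}{2}}\int_{M_{t}}\Omega_{t}\wedge\bar{\Omega}_{t}$ is bounded. On the complement, any compact subset of $\Delta\backslash\{0\}$ carries only smooth compact Calabi-Yau fibres, so the period varies continuously and is bounded there as well; after shrinking $\Delta$ to, say, $\{|t|\leq\tfrac12\}$, one obtains a single bound $C$ valid for all admissible $t$. Substituting $C$ into the estimate of Theorem \ref{t1.1+} completes the proof.

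I expect the only genuine difficulty to lie inside Theorem \ref{B.1}: controlling the behaviour of the holomorphic volume form near the canonical singularities of $M_{0}$ under the degeneration $M_{t}\rightsquigarrow M_{0}$ --- a Hodge-theoretic statement about vanishing cycles and the limit mixed Hodge structure. Granting that appendix, the corollary is immediate from Theorem \ref{t1.1+}.
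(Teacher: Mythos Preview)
Your proposal is correct and matches the paper's approach exactly: the paper simply states that the corollary ``is a consequence of Theorem~\ref{t1.1+} and Theorem~\ref{B.1}'' without further argument, and your write-up just unpacks this, combining the diameter bound of Theorem~\ref{t1.1+} with the uniform bound on $(-1)^{n^{2}/2}\int_{M_{t}}\Omega_{t}\wedge\bar{\Omega}_{t}$ supplied by Theorem~\ref{B.1}(ii). Your additional paragraph on why canonical singularities make the limiting period finite is extra motivation rather than a different route.
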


\vskip4mm

 \section{Proof of Theorem \ref{t4.2+}}
 \vskip2mm

 Let  $M_{0}$ be an $n$-dimensional Calabi-Yau variety  with singular
set $S$.   Assume that $M_{0}$  admits  a smoothing $\pi:
\mathcal{M}\rightarrow \Delta$ over the unit disc $\Delta\subset
\mathbb{C}$ such that  $\mathcal{M}$ admits an ample line bundle
$\mathcal{L}$, and the relative canonical bundle is trivial, i.e.,
$\mathcal{K}_{\mathcal{M}/\Delta}\cong \mathcal{O}_{\mathcal{M}}$.
Following the discussion at the beginning of the proof of Theorem
\ref{t1.1+},     let
$$\omega_{t}=\omega_{h}|_{M_{t}}=\frac{1}{m}\omega_{FS}|_{M_{t}},
\ \ {\rm and} \ \ \omega_{h}=\sqrt{-1}\partial\bar{\partial}
|t|^{2}+\frac{1}{m}\omega_{FS},$$ for any $t\in\Delta$, where
$\omega_{FS}$ is the standard Fubini-Study metric on
$\mathbb{CP}^{N}$, and   $g_{t}$ is   the corresponding K\"{a}hler
metric of $ \omega_{t} $.  Let $ \Omega_{t}$ be a relative
holomorphic volume form, i.e.,     a  nowhere vanishing section of $
\mathcal{K}_{\mathcal{M}/\Delta}$.  Yau's proof of Calabi's
conjecture (\cite{Ya1})  asserts that    there is a unique
Ricci-flat K\"{a}hler  metric  $\tilde{g}_{t}$  with K\"{a}hler form
$\tilde{\omega}_{t}\in [\omega_{t}]= c_{1}(\mathcal{L})|_{M_{t}}\in
H^{1,1}(M_{t}, \mathbb{R})$ for $t\in \Delta\backslash \{0\}$, i.e.,
there is a   unique function $\varphi_{t}$ on $M_{t}$ satisfying
that $\tilde{\omega}_{t}=\omega_{t}+
\sqrt{-1}\partial\overline{\partial}\varphi_{t}$, and
\begin{equation}\label{e51.3}(\omega_{t}+
\sqrt{-1}\partial\overline{\partial}\varphi_{t})^{n}=(-1)^{\frac{n^{2}}{2}}e^{\sigma_{t}}\Omega_{t}\wedge
\overline{\Omega}_{ t},
 \ \ {\rm with} \ \ \sup_{M_{t}}\varphi_{t} =0
\end{equation} where   $$\sigma_{t}=\log\left(n!V((-1)^{\frac{n^{2}}{2}}\int_{M_{t}}\Omega_{t}\wedge
\overline{\Omega}_{ t})^{-1}\right) $$ and  $V={\rm
Vol}_{\tilde{g}_{t}}(M_{t})$.

By  Theorem  \ref{B.1}, on $ M_{t}$  we have
\begin{equation}\label{l003.3+}
(-1)^{\frac{n^{2}}{2}}\Omega_{t}\wedge \overline{\Omega}_{ t}\geq
\kappa \omega_{t}^{n},
\end{equation}   and
\begin{equation}\label{e1.1+}\int_{M_{t}}(-1)^{\frac{n^{2}}{2}}\Omega_{t}\wedge
\overline{\Omega}_{ t}\leq \Lambda, \end{equation} where $\kappa
>0$ and $\Lambda >0$ are  constants   independent of $t\in \Delta\backslash\{0\}$.
  Thus   there is a constant $C_{1}>0$ independent of $t$ such
that \begin{equation}\label{e51.3+}0<C_{1}^{-1}\leq e^{\sigma_{t}}
\leq C_{1}.
\end{equation}

Note that $(M_{t}, \tilde{g}_{t})$ satisfies  that $${\rm
Ric}_{\tilde{g}_{t}}\equiv 0, \ \ {\rm
Vol}_{\tilde{g}_{t}}(M_{t})\equiv V, \ \ {\rm and} \ \  {\rm
diam}_{\tilde{g}_{t}}(M_{t})\leq D,$$ where the upper bound of
diameters  is from   Corollary \ref{c1.1+}. By \cite{Cr}, \cite{Ga}
and \cite{Li}, $(M_{t}, \tilde{g}_{t})$ has uniform Sobolev
constants, i.e.,  constants $\bar{C}_{S,1}>0$ and $\bar{C}_{S,2}
>0$ independent of $t$ such that for any $t\neq 0$ and
 any  smooth function $\chi$ on $M_{t}$,  \begin{equation}\label{e001.2+} \|\chi\|_{L^{\frac{4n}{2n-2}}
 (\tilde{g}_{t})}^{2}\leq \bar{C}_{S,1}
 (\|d\chi\|_{L^{2}(\tilde{g}_{t})}^{2}+\|\chi\|_{L^{2}
 (\tilde{g}_{t})}^{2})
, \end{equation}   and   if
  $\int_{M_{t}}\chi dv_{\tilde{g}_{t}}=0 $,   \begin{equation}\label{e001.3+}
\|\chi\|_{L^{\frac{4n}{2n-2}}
 (\tilde{g}_{t})}^{2}\leq \bar{C}_{S,2}
 \|d\chi\|_{L^{2}(\tilde{g}_{t})}^{2}.  \end{equation}

 Now  following  the standard  Moser iteration argument in    \cite{Ya1} with
  a trick  inspired by  \cite{To}, we are able to get a uniform
 $C^{0}$-estimate of the potential function $\varphi_{t}$.

\vskip2mm
\begin{lemma}\label{l5.3+}
There is a constant $C>0$ independent of $t\in \Delta\backslash
\{0\}$ such that
$$\|\varphi_{t}\|_{C^{0}(M_{t})}\leq  C.
$$
\end{lemma}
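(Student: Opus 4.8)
The plan is to establish a uniform $C^0$-bound on the Monge--Amp\`ere potential $\varphi_t$ by running Yau's classical Moser iteration, but taking care that all constants entering the argument depend only on the uniform data already collected: the Sobolev constants $\bar C_{S,1},\bar C_{S,2}$ from \eqref{e001.2+}--\eqref{e001.3+}, the volume bound ${\rm Vol}_{\tilde g_t}(M_t)\equiv V$, the diameter bound from Corollary \ref{c1.1+}, and the two-sided bound \eqref{e51.3+} on $e^{\sigma_t}$ coming from \eqref{l003.3+}--\eqref{e1.1+}. Write the Monge--Amp\`ere equation in the form $(\omega_t+\sqrt{-1}\partial\bar\partial\varphi_t)^n = e^{\sigma_t+f_t}\,\omega_t^n$, where $e^{f_t}\,\omega_t^n = (-1)^{n^2/2}\Omega_t\wedge\bar\Omega_t$; here \eqref{l003.3+} and \eqref{e1.1+} give a uniform upper bound for $\|e^{f_t}\|_{L^\infty}$ (the lower bound on the Calabi--Yau variety side) together with a uniform bound on $\int_{M_t}e^{f_t}\omega_t^n$. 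The point of the Tosatti-type trick is that although $\omega_t$ degenerates as $t\to 0$, the degeneration is controlled purely through $f_t$, and the $L^\infty$ and $L^1$ control on $e^{f_t}$ is exactly what Moser iteration consumes.

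The key steps, in order, are as follows. First I would record the standard integration-by-parts identity: for $p\ge 1$, testing the equation against $|\varphi_t|^{p-1}\varphi_t$ (or $\varphi_t^-$ raised to a power, since $\sup\varphi_t=0$ forces us to work with the nonpositive function $-\varphi_t\ge 0$) and using $\tilde\omega_t^n - \omega_t^n = \sqrt{-1}\partial\bar\partial\varphi_t\wedge(\sum_{j}\tilde\omega_t^j\wedge\omega_t^{n-1-j})$ yields the differential inequality
\begin{equation*}
\int_{M_t}|d(|\varphi_t|^{\frac{p+1}{2}})|^2_{\omega_t}\,\omega_t^n \;\le\; C\,p\int_{M_t}|\varphi_t|^p\,(1+e^{\sigma_t+f_t})\,\omega_t^n,
\end{equation*}
with $C$ depending only on $n$. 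Second, I would feed this into the Sobolev inequality \eqref{e001.2+} \emph{for the metric $\omega_t$} --- but here one must be a little careful, since the uniform Sobolev inequality is stated for $\tilde g_t$, not $g_t$; the cleaner route is to run the whole iteration with respect to $\tilde g_t = g(\tilde\omega_t)$, rewriting $\omega_t^n = e^{-\sigma_t-f_t}\tilde\omega_t^n$ so that the right-hand side becomes $\int |\varphi_t|^p(e^{-\sigma_t-f_t}+1)\,dv_{\tilde g_t}$; the factor $e^{-\sigma_t-f_t}$ is \emph{not} bounded above (that is where $f_t\to-\infty$ near $S$), so instead one keeps $\omega_t$ as the reference and uses the elementary Sobolev inequality on $\mathbb{CP}^N$ restricted to $M_t$, whose constant is uniform because $[\omega_t]=c_1(\mathcal L)|_{M_t}$ has fixed intersection numbers and $g_t$ is a fixed Fubini--Study metric. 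Either way one obtains, after the usual iteration over the exponents $p_k=(\frac{2n}{2n-2})^k$, a bound
\begin{equation*}
\|\varphi_t\|_{L^\infty(M_t)} \;\le\; C\big(1+\|\varphi_t\|_{L^1(\omega_t^n)}\big).
\end{equation*}
Third, I would close the loop by bounding $\|\varphi_t\|_{L^1}$: integrate the equation against $1$ to get $\int\varphi_t\,\tilde\omega_t^n$ under control, then use the standard fact (exponential integrability of $\omega_t$-plurisubharmonic functions with fixed normalization $\sup=0$, i.e. a uniform $\alpha$-invariant / Skoda-type estimate for the fixed polarization $(\mathbb{CP}^N,\omega_{FS})$) to convert this into $\int_{M_t}(-\varphi_t)\,\omega_t^n\le C$; alternatively one invokes the a priori $L^1$ bound that is built into the Ko\l odziej-type estimates of \cite{EGZ}, whose hypotheses are met uniformly here.

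The main obstacle I expect is precisely this uniformity in $t$: on the faces of the argument where Yau's original proof uses properties of a \emph{fixed} background metric (the Sobolev inequality, the Poincar\'e inequality controlling $\|\varphi_t\|_{L^2}$ by $\|\varphi_t\|_{L^1}$ and $\|d\varphi_t\|_{L^2}$, and the $L^1$-bound on the potential), one must supply a single constant valid for \emph{all} small $t$ despite the fact that $g_t$ degenerates to a metric on the singular variety $M_0$. The resolution is that the degeneration enters \emph{only} through the density $e^{f_t}$, and what Moser iteration actually needs from that density is a uniform $L^\infty$ bound above and a uniform $L^1$ bound --- both guaranteed by Appendix B (Theorem \ref{B.1}) together with the Calabi--Yau assumption on $M_0$ --- so the geometric quantities that must be uniform are those of the \emph{fixed} ambient $(\mathbb{CP}^N,\omega_{FS})$ and of $(M_t,\tilde g_t)$, the latter being uniform by Ricci-flatness, the volume normalization, and the diameter bound of Corollary \ref{c1.1+} via the results of \cite{Cr,Ga,Li}. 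Once this bookkeeping is done, the iteration is the routine computation of \cite{Ya1} and I would not belabor it.
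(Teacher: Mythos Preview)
You have the right instinct --- Moser iteration with uniform Sobolev constants --- but you have the direction of the key inequality backwards, and this causes you to dismiss precisely the approach that works. Inequality \eqref{l003.3+} says $(-1)^{n^2/2}\Omega_t\wedge\bar\Omega_t \ge \kappa\,\omega_t^n$, i.e.\ $e^{f_t}\ge\kappa$, so $f_t$ is bounded \emph{below}, not above. Hence $e^{-\sigma_t-f_t}$ \emph{is} uniformly bounded above, while $e^{\sigma_t+f_t}$ is not (nothing in the paper asserts a pointwise upper bound on $\Omega_t\wedge\bar\Omega_t/\omega_t^n$, and indeed this ratio typically blows up near the singular set). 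So your claimed ``uniform upper bound for $\|e^{f_t}\|_{L^\infty}$'' from \eqref{l003.3+}--\eqref{e1.1+} is not there, and your fallback route --- keeping $\omega_t$ as reference and using an ambient Sobolev inequality --- would need exactly that unavailable upper bound on $e^{f_t}$ in the iteration step. (The claim that the induced Fubini--Study Sobolev constant on $M_t$ is uniform is also not obvious as $M_t$ degenerates to a singular variety.)

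The paper does exactly the ``cleaner route'' you wrote down and then abandoned: set $\tilde\varphi_t=\int_{M_t}\varphi_t\,dv_{\tilde g_t}-\varphi_t$, so that $\omega_t=(\tilde\omega_t+\sqrt{-1}\partial\bar\partial\tilde\varphi_t)$ and $\omega_t^n=e^{-f_t}\tilde\omega_t^n$ with $e^{-f_t}\le C_2$ by \eqref{l003.3+} and \eqref{e51.3+}. Now Yau's inequality reads $\int|d|\tilde\varphi_t|^{p/2}|^2\,dv_{\tilde g_t}\le Ap\int|\tilde\varphi_t|^{p-1}\,dv_{\tilde g_t}$ with $A$ depending only on $\sup e^{-f_t}$, and one feeds this into the uniform Sobolev inequalities \eqref{e001.2+}--\eqref{e001.3+} for $\tilde g_t$ (the zero-mean version \eqref{e001.3+} handles $p=2$ directly since $\int\tilde\varphi_t\,dv_{\tilde g_t}=0$ by construction, eliminating any need for a separate $L^1$ bound or $\alpha$-invariant argument). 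The iteration then gives $\|\tilde\varphi_t\|_{C^0}\le C$, and $\sup\varphi_t=0$ recovers $\|\varphi_t\|_{C^0}\le C$. In short: swap the roles of $\omega_t$ and $\tilde\omega_t$ --- the trick is that this turns the bad density into the good one.
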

\vskip2mm

\begin{proof}
Let
$$f_{t}=\log\left(\frac{(-1)^{\frac{n^{2}}{2}}e^{\sigma_{t}}\Omega_{t}\wedge
\overline{\Omega}_{ t}}{\omega_{t}^{n}}\right), \ \ \ \
\tilde{\varphi}_{t}=\int_{M_{t}}\varphi_{t}dv_{\tilde{g}_{t}}-
\varphi_{t}.$$  Then (\ref{e51.3}) shows that
$$\omega_{t}^{n}=e^{-f_{t}}\tilde{\omega}_{t}^{n}=(\tilde{\omega}_{t}+
\sqrt{-1}\partial\overline{\partial}\tilde{\varphi}_{t})^{n},
 \ \ {\rm with} \ \ \int_{M_{t}}\tilde{\varphi}_{t} dv_{\tilde{g}_{t}}=0.
 $$  By (\ref{l003.3+})   and (\ref{e51.3+}), there is a constant $C_{2}>0$ independent of $t$
   such that $$ e^{-f_{t}}=\left(\frac{(-1)^{\frac{n^{2}}{2}}e^{\sigma_{t}}\Omega_{t}\wedge
\overline{\Omega}_{ t}}{\omega_{t}^{n}}\right)^{-1}\leq C_{2}. $$
 Now  we follow the standard  Moser iteration argument in    \cite{Ya1} (cf. \cite{Au}).

 A direct calculation shows that
 \begin{equation}\label{ea1.3+}\int_{M_{t}}|d|\tilde{\varphi}_{t}|^{\frac{p}{2}}|^{2}dv_{\tilde{g}_{t}} \leq  \frac{np^{2}}{4(p-1)}
   \int_{M_{t}}|1-e^{-f_{t}}||\tilde{\varphi}_{t}|^{p-1}dv_{\tilde{g}_{t}}
   \leq Ap \int_{M_{t}}|\tilde{\varphi}_{t}|^{p-1}dv_{\tilde{g}_{t}},\end{equation}
   for any $p\geq 2$ (cf.   (15) in Chapter
   7 of  \cite{Au}), where $A>0$ is a constant independent of $t$.  For $p=2$,   by
   (\ref{e001.3+}), (\ref{ea1.3+})
   and H\"{o}lder's inequality we see  that \begin{eqnarray*}\|\tilde{\varphi}_{t}\|_{L^{\frac{4n}{2n-2}}
 (\tilde{g}_{t})}^{2}& \leq &  \bar{C}_{S,2}
 \|d\tilde{\varphi}_{t}\|_{L^{2}(\tilde{g}_{t})}^{2} \\ &\leq &  2A\bar{C}_{S,2}
 \int_{M_{t}}|\tilde{\varphi}_{t}|dv_{\tilde{g}_{t}} \\ & \leq & 2A\bar{C}_{S,2}V^{\frac{2n+2}{4n}}
 \|\tilde{\varphi}_{t}\|_{L^{\frac{4n}{2n-2}}
 (\tilde{g}_{t})},  \end{eqnarray*} and thus $$\|\tilde{\varphi}_{t}\|_{L^{\frac{4n}{2n-2}}
 (\tilde{g}_{t})}\leq \hat{C},$$ where  $\hat{C}$ is   a constant independent of $t$.
 For  $p>2$, by (\ref{e001.2+}),  (\ref{ea1.3+})
   and H\"{o}lder's inequality  we see that   \begin{eqnarray*}\|\tilde{\varphi}_{t}\|_{L^{\frac{2np}{2n-2}}
 (\tilde{g}_{t})}^{p}& = &\||\tilde{\varphi}_{t}|^{\frac{p}{2}}\|_{L^{\frac{4n}{2n-2}}
 (\tilde{g}_{t})}^{2} \\ & \leq &  \bar{C}_{S,1}
 (\|d|\tilde{\varphi}_{t}|^{\frac{p}{2}}\|_{L^{2}(\tilde{g}_{t})}^{2}+\||\tilde{\varphi}_{t}|^{\frac{p}{2}}\|_{L^{2}
 (\tilde{g}_{t})}^{2})\\ & \leq & \bar{C}_{S,1}(pAV^{\frac{1}{p}}+\|\tilde{\varphi}_{t}\|_{L^{p}
 (\tilde{g}_{t})})\|\tilde{\varphi}_{t}\|_{L^{p}
 (\tilde{g}_{t})}^{p-1}. \end{eqnarray*} Let
 $p_{0}=\frac{4n}{2n-2}$,
  $p_{k+1}=\frac{2n}{2n-2}p_{k}$ ($k\geq 0$), let  $\hat{C}_{0}=\hat{C}$ and let
   $\hat{C}_{k+1}=\bar{C}_{S,1}^{\frac{1}{p_{k}}}
  (p_{k}AV^{\frac{1}{p_{k}}}+1)^{\frac{1}{p_{k}}}\hat{C}_{k}$ if
  $\hat{C}_{k}>1$. Otherwise, let
  $\hat{C}_{k+1}=\bar{C}_{S,1}^{\frac{1}{p_{k}}}
  (p_{k}AV^{\frac{1}{p_{k}}}+1)^{\frac{1}{p_{k}}}$.  Then  $\|\tilde{\varphi}_{t}\|_{L^{p_{k}}
 (\tilde{g}_{t})}\leq \hat{C}_{k}<C_{3}$,  a constant $C_{3}>0$ independent of
 $k$ and $t$.  By letting $k\rightarrow\infty$, we have $$\|\tilde{\varphi}_{t}\|_{C^{0}
 (M_{t})}\leq C_{3}.$$ Since  there is a  $p_{t}\in M_{t}$ such that $
 \varphi_{t}(p_{t})=0$,  we have
 $$\left|\int_{M_{t}}\varphi_{t}dv_{\tilde{g}_{t}}\right|\leq C_{3},  \ \ {\rm and
 } \ \ \|\varphi_{t}\|_{C^{0}
 (M_{t})}\leq C,$$ where $C>0$ is   a constant  independent
 of $t$.
\end{proof}

\vskip2mm

The $C^{2}$-estimate  for $\varphi_{t}$ is obtained by the
 same  arguments as in proof of  Lemma 5.2  in  \cite{RZ}. For the
completeness, we   present it here.

\vskip2mm

\begin{lemma}\label{l5.4}
For any compact subset $K\subset \mathcal{M}\backslash S$, there
exists a constant $C_{K}>0$ independent of $t$ such that on $K\cap
M_{t} $,
$$C\omega_{t}\leq
\tilde{\omega}_{t}\leq C_{K}\omega_{t},$$  where $C>0$ is a constant
independent of $t$ and $K$.
\end{lemma}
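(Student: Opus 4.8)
The plan is to prove the two inequalities separately: the lower bound $\tilde{\omega}_{t}\geq C\omega_{t}$ will be \emph{global} on $M_{t}$ and uniform in $t$, obtained from a Chern--Lu (Schwarz lemma) computation together with the $C^{0}$-estimate of Lemma \ref{l5.3+}; the upper bound $\tilde{\omega}_{t}\leq C_{K}\omega_{t}$ on $K\cap M_{t}$ will then follow by an elementary pointwise linear-algebra argument using the Monge--Amp\`ere equation (\ref{e51.3}).

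For the lower bound, the crucial point is that $\omega_{t}=\frac{1}{m}\omega_{FS}|_{M_{t}}$ is the restriction to the complex submanifold $M_{t}\subset\mathbb{CP}^{N}$ of a \emph{fixed} Fubini--Study form, so by the Gauss equation the holomorphic bisectional curvature of $(M_{t},g_{t})$ is bounded above by that of $\frac{1}{m}\omega_{FS}$, hence by a constant $K_{2}>0$ independent of $t$ and of the point. Since $\Ric(\tilde{g}_{t})\equiv 0$, the Chern--Lu inequality applied to the identity map $(M_{t},\tilde{g}_{t})\to(M_{t},g_{t})$ gives $\Delta_{\tilde{g}_{t}}\log\tr_{\tilde{\omega}_{t}}\omega_{t}\geq -K_{2}\tr_{\tilde{\omega}_{t}}\omega_{t}$. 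Combining this with $\Delta_{\tilde{g}_{t}}\varphi_{t}=n-\tr_{\tilde{\omega}_{t}}\omega_{t}$, the function $H=\log\tr_{\tilde{\omega}_{t}}\omega_{t}-(K_{2}+1)\varphi_{t}$ satisfies $\Delta_{\tilde{g}_{t}}H\geq \tr_{\tilde{\omega}_{t}}\omega_{t}-(K_{2}+1)n$. Evaluating at a maximum point of $H$ on the compact manifold $M_{t}$ forces $\tr_{\tilde{\omega}_{t}}\omega_{t}\leq (K_{2}+1)n$ there, and then the uniform bound $\|\varphi_{t}\|_{C^{0}}\leq C$ from Lemma \ref{l5.3+} propagates this to $\tr_{\tilde{\omega}_{t}}\omega_{t}\leq C_{0}$ on all of $M_{t}$, with $C_{0}$ independent of $t$. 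This is precisely $\omega_{t}\leq C_{0}\tilde{\omega}_{t}$, i.e. $\tilde{\omega}_{t}\geq C\omega_{t}$ with $C=C_{0}^{-1}$ independent of $t$ and $K$.

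For the upper bound, note that since $K$ is a compact subset of $\mathcal{M}\backslash S$ and $\Omega_{t}$ is a nowhere-vanishing section of the trivial relative canonical bundle, the ratio $(-1)^{\frac{n^{2}}{2}}\Omega_{t}\wedge\overline{\Omega}_{t}/\omega_{t}^{n}$ is a smooth positive function depending smoothly on $t$ up to $t=0$, hence bounded above on $K$ by a constant depending on $K$ but not on $t$; together with (\ref{e51.3+}) and the Monge--Amp\`ere equation (\ref{e51.3}) this yields $\tilde{\omega}_{t}^{n}/\omega_{t}^{n}\leq C_{K}$ on $K\cap M_{t}$. Let $\lambda_{1},\dots,\lambda_{n}$ be the eigenvalues of $\tilde{\omega}_{t}$ with respect to $\omega_{t}$ at a point of $K\cap M_{t}$: the lower bound already proved gives $\lambda_{i}\geq C$ for every $i$, while $\prod_{i}\lambda_{i}\leq C_{K}$, so $\lambda_{i}\leq C_{K}C^{-(n-1)}$ for every $i$ and hence $\tr_{\omega_{t}}\tilde{\omega}_{t}=\sum_{i}\lambda_{i}\leq nC_{K}C^{-(n-1)}=:C_{K}'$, which is the assertion. (Alternatively, the upper bound can be obtained by the standard local Aubin--Yau second-order estimate for $\tr_{\omega_{t}}\tilde{\omega}_{t}$ with a cutoff function supported in $\mathcal{M}\backslash S$, exactly as in Lemma 5.2 of \cite{RZ}, but the argument above is shorter once the global lower bound is in hand.)

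The only genuinely delicate step is the uniformity in $t$ of the lower bound: for an arbitrary family of reference metrics this would fail, because as $t\to 0$ the geometry of $\omega_{t}$ near the vanishing cycles degenerates and its curvature blows up. What saves the argument is that $\omega_{t}$ is induced by an embedding into a fixed projective space, so that the holomorphic bisectional curvature of the submanifold $(M_{t},g_{t})$ stays bounded above by a $t$-independent constant; after that the Chern--Lu inequality, the maximum principle on the compact $M_{t}$, and the linear algebra are all routine, the one external input being the uniform $C^{0}$-estimate of Lemma \ref{l5.3+}. One should also record, though it is immediate from smoothness of all data on $\mathcal{M}\backslash S$, that both the bisectional curvature of $\omega_{t}$ near $K$ and the ratio $\Omega_{t}\wedge\overline{\Omega}_{t}/\omega_{t}^{n}$ on $K$ vary continuously up to the central fibre over any compact $K\subset\mathcal{M}\backslash S$.
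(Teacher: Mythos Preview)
Your proof is correct and follows essentially the same route as the paper: a Chern--Lu/Schwarz-lemma argument combined with the $C^{0}$-bound of Lemma~\ref{l5.3+} to get the global lower bound $\omega_{t}\leq C\,\tilde{\omega}_{t}$, and then the Monge--Amp\`ere equation together with the smoothness of $\Omega_{t}\wedge\overline{\Omega}_{t}/\omega_{t}^{n}$ on $K$ to bound the volume ratio, from which the upper bound follows by elementary linear algebra on the eigenvalues.

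The one small difference is in how the Chern--Lu step is set up. You apply it to the identity map $(M_{t},\tilde{g}_{t})\to(M_{t},g_{t})$ and invoke the Gauss equation to bound the holomorphic bisectional curvature of the induced metric $g_{t}$ from above by that of $\frac{1}{m}\omega_{FS}$. The paper instead applies the Chern--Lu inequality directly to the inclusion $\psi_{t}:(M_{t},\tilde{\omega}_{t})\to(\mathbb{CP}^{N},\frac{1}{m}\omega_{FS})$, so that the curvature term is that of the \emph{fixed ambient target} and no Gauss-equation step is needed; one simply observes $|\partial\psi_{t}|^{2}=\tr_{\tilde{\omega}_{t}}\omega_{t}$. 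Both formulations are valid and lead to the same differential inequality; the paper's is marginally more direct, while yours makes explicit why uniformity in $t$ holds.
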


\vskip2mm
\begin{proof} Let
 $\psi_{t}: (M_{t}, \tilde{\omega}_{t}) \longrightarrow (\mathbb{CP}^{N},
\frac{1}{m}\omega_{FS})$ be the inclusion   map induced by
$\mathcal{M}\subset \mathbb{CP}^{N}\times\Delta $.  The Chern-Lu
inequality says
$$\Delta_{\tilde{\omega}_{t}}\log |\partial \psi_{t}|^{2}\geq \frac{{\rm Ric}_{\tilde{\omega}_{t}}(\partial \psi_{t},
\overline{ \partial \psi_{t}})}{|\partial \psi_{t}|^{2}}-\frac{{\rm
Sec}(\partial \psi_{t}, \overline{
\partial \psi_{t}},
\partial \psi_{t},
\overline{ \partial \psi_{t}})}{|\partial \psi_{t}|^{2}},
$$  where ${\rm
Sec}$ denotes  the holomorphic bi-sectional curvature of
$\frac{1}{m}\omega_{FS}$ (cf.  \cite{Y3}). Note that
$\psi_{t}^{*}\omega_{FS}=\omega_{t} $,  $|\partial
\psi_{t}|^{2}={\rm
tr}_{\tilde{\omega}_{t}}\psi_{t}^{*}\omega_{FS}={\rm
tr}_{\tilde{\omega}_{t}}\omega_{t}=n-\Delta_{\tilde{\omega}_{t}}\varphi_{t}$
and ${\rm Ric}_{\tilde{\omega}_{t}}=0$. Thus we have that
 $$\Delta_{\tilde{\omega}_{t}}(\log {\rm tr}_{\tilde{\omega}_{t}}\omega_{t}-2\overline{R}\varphi_{t})
\geq -2\overline{R}n+\overline{R}{\rm
tr}_{\tilde{\omega}_{t}}\omega_{t}.
$$where $\overline{R}$ is  a constant  depending only the upper bound of ${\rm
Sec} $. By the  maximum principle  and
 Lemma \ref{l5.3+}, there is an $x\in M_{t}$ such that
${\rm tr}_{\tilde{\omega}_{t}}\omega_{t}(x)\leq 2n$,
 $${\rm tr}_{\tilde{\omega}_{t}}\omega_{t}\leq 2ne^{2\overline{R}(\varphi_{t}-\varphi_{t}(x))}\leq C \ \ \
{\rm and} \ \ \  \omega_{t}\leq C \tilde{\omega}_{t},  $$
 where $C>0$ is  a constant  independent of $t$. Note that for any compact subset $K\subset \mathcal{M}\backslash
S$, by (\ref{e51.3+}) and the  compactness of $K$ there exists a
constant $C'_{K}>0$ independent of $t$  such that on $K\cap M_{t} $
$$
\tilde{\omega}_{t}^{n}=e^{\sigma_{t}}(-1)^{\frac{n^{2}}{2}}\Omega_{t}\wedge
\overline{\Omega}_{ t} \leq C'_{K}\omega_{t}^{n}.$$ Then we obtain
that
$$C\omega_{t}\leq \tilde{\omega}_{t}\leq C_{K}\omega_{t}.$$
\end{proof}
\vskip2mm

Now we are ready to  prove  Theorem \ref{t4.2+}.

 \vskip 2mm

\begin{proof}[Proof of Theorem \ref{t4.2+}]
   In \cite{EGZ}, it is proved that there is a unique
 bounded  function $\hat{\varphi}_{0}$ on $M_{0}$ such that $\hat{\varphi}_{0}$  is smooth
on $M_{0}\backslash S$ and satisfies
\begin{equation}\label{6.22}
 (\omega_{0}+
\sqrt{-1}\partial \overline{\partial}\hat{\varphi}_{0}
)^{n}=(-1)^{\frac{n^{2}}{2}}e^{\hat{\sigma}_{0}}\Omega_{0}\wedge
\overline{\Omega}_{ 0} , \ \ \ \sup_{M_{0}}\hat{\varphi}_{0}=0,
\end{equation}
 in the distribution sense, where $\hat{\sigma}_{0}
$ is a constant. Note that  $\omega=\omega_{0}+\sqrt{-1}\partial
\overline{\partial}\hat{\varphi}_{0}$ is the unique  singular
Ricci-flat
 K\"ahler   form with $\|\hat{\varphi}_{0}\|_{L^{\infty}}\leq C $.
 Let $F:(M_{0}\backslash S)\times \Delta
 \longrightarrow  \mathcal{M}$ be a smooth embedding  such that  $F((M_{0}\backslash S)\times
 \{t\})\subset M_{t}$ and $F|_{(M_{0}\backslash S)\times
 \{0\}}: M_{0}\backslash S \longrightarrow M_{0}\backslash S$ is the identity map.
   Let $K_{1}\subset \cdots \subset K_{i} \subset \cdots \subset
M_{0}\backslash S$ be  a sequence of compact  subsets  such that
$M_{0}\backslash S =\bigcup\limits_{i}K_{i}$. On a  fixed  $K_{i}$,
the embedding map
$$F_{K_{i},t}=F|_{K_{i}\times
 \{t\}}: K_{i} \longrightarrow M_{t}$$ satisfies   that $F_{K_{i},t}^{*}\omega_{t}$ $C^{\infty}$-converges to
$\omega_{0}$, and $dF_{K_{i},t}^{-1}J_{t}dF_{K_{i},t}$
$C^{\infty}$-converges to $J_{0}$, where $J_{t}$ (resp.  $J_{0}$) is
the complex structure on $M_{t}$ (resp.  $M_{0}$).

 For a fixed
$K_{i}$,  let $K$ be a compact subset of $\mathcal{M}\backslash S$
such that $F_{K_{i},t_{k}}( K_{i})\subset K$ for $|t_{k}|\ll 1$.
  By (\ref{e51.3+}),   Lemma \ref{l5.3+}  and Lemma \ref{l5.4},
  there exist  constants $C>0$ and  $C_{K}>0$
independent of $t$ such that $C^{-1}\leq \sigma_{t} \leq C$,
$\|\varphi_{t}\|_{C^{0}(M_{t})}\leq C$, and $C^{-1}\omega_{t}\leq
\omega_{t}+\sqrt{-1}\partial \overline{\partial}\varphi_{t} \leq
C_{K}\omega_{t} $ on $K$.  By Theorem 17.14 in \cite{GT}, we have
that  $\|\varphi_{t}\|_{C^{2, \alpha}(M_{t}\cap K)}\leq C_{K}''$ for
a constant $C_{K}''>0$. Furthermore,  by the standard bootstrapping
argument we have that  for any $l>0$,   $\|\varphi_{t}\|_{C^{l,
\alpha}(M_{t}\cap K)}\leq C_{K,l}$ for constants $C_{K,l}>0$
independent of $t$.   By the standard diagonal   arguments and
passing to a subsequence, we see that
$F_{K_{i_{k}},t_k}^{*}\varphi_{t_{k}}$ $C^{\infty}$-converges to a
smooth function  $\varphi_{0}$ on $M_{0}\backslash  S$ with
 $\|\varphi_{0}\|_{L^{\infty}}<C$ and that  $\sigma_{t_{k}} $ converges to a $\sigma_{0}$,
 which satisfies   $$(\omega_{0}+ \sqrt{-1}\partial
\overline{\partial}\varphi_{0}
)^{n}=(-1)^{\frac{n^{2}}{2}}e^{\sigma_{0}}\Omega_{0}\wedge
\overline{\Omega}_{ 0}.$$   Hence $\tilde{\omega}_{0}=\omega_{0}+
\sqrt{-1}\partial \overline{\partial}\varphi_{0}$ is a Ricci-flat
K\"{a}hler  form on
 $M_{0}\backslash S$ with $\|\varphi_{0}\|_{L^{\infty}}<C$.
  By the uniqueness of the solution of
(\ref{6.22}),  $ \varphi_{0}= \hat{\varphi}_{0}$ and
$\sigma_{0}=\hat{\sigma}_{0}$. The uniqueness of $\omega$ and the
standard compactness argument
 imply    that $F|_{M_{0}\backslash S\times
 \{t\}}^{*}\tilde{\omega}_{t}$  (resp. $F|_{M_{0}\backslash S\times
 \{t\}}^{*}\tilde{g}_{t}$) $C^{\infty}$-converges
to $\omega$ (resp. $g$)  when $t\rightarrow 0$.

The diameter estimate is obtained  by Corollary \ref{c1.1+}.
\end{proof}

\vskip4mm

\section{An  Almost Gauge Fixing Theorem}

\vskip2mm

Let $M$ be a compact $n$-manifold, and let $g_k$ be a sequence of
Riemannian metrics on $M$. Assume that the Ricci curvature, volume
and diameter of $g_k$ satisfy

\noindent i) $|\text{Ric}(g_k)|\le 1, \text{Vol}_{g_k}(M)\ge V>0$
and $\text{diam}_{g_k}(M)\le D$.

By the Gromov's pre-compactness theorem, we may assume

\noindent ii) $(M,g_k) \stackrel{d_{GH}}\longrightarrow  (X,d_X)$,
where $(X,d_X)$ is a compact metric space.

Suppose, in addition,

\noindent iii) $E$ is a closed  subset  of Hausdorff  dimension $\le
n-2$, and there is a (non-complete) Riemannian metric $g_\infty$ on
$M\backslash E$ such that $g_k$ converges to $g_\infty$ in the
$C^\infty$-sense on any compact subset $K\subset M\backslash E$.

Because $M\backslash E$ is path connected, $g_\infty$ induces the
the Riemannian distance structure  defined by
$$d_{g_\infty}(x,y)=\inf_{\gamma \text{ continuous}}\{{\rm length}_{g_\infty}(\gamma), \,
\,\, \gamma:[0,1]\to M\backslash E,\,\, \gamma(0)=x,\gamma(1)=y\}.$$
Let $\overline{(M\backslash E,g_\infty)}$ denote the metric
completion of $(M\backslash E,d_{g_\infty})$. Let $S_X\subset X$
denote  the subset consisting of points $x\in X$ such that there is a
sequence $x_k\in E\subset(M,g_k)$ and $x_k\to x$ (see comments at
the end of Appendix A). It is clear that $S_X\subset X$ is a closed
subset and thus $S_X$ is compact.

The main effort of this section is to prove the following result.

\vskip2mm

\begin{theorem}\label{t2.01}
Let $M$, $g_{k}$, $g_{\infty}$, $d_{g_\infty}$,   $E$, $(X,d_{X})$
and $S_X $  be as above. Then there is a continuous surjection
$f: \overline{(M\backslash E,d_{g_\infty})}\to (X,d_X)$ such that
$f: (M\backslash E,d_{g_\infty})\to (X\backslash S_X,d_X)$ is a
homeomeomorphism and a local isometry, i.e., for any $x\in
M\backslash E$, there is an open neighborhood of $x$, $U\subset
M\backslash E$, such that $f: (U,d_{g_\infty}|_{U})\to
(f(U),d_X|_{f(U)})$ is an isometry.
\end{theorem}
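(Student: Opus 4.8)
The plan is to build the map $f$ by exploiting the fact that on compact subsets $K\subset M\backslash E$ the metrics $g_k$ converge smoothly to $g_\infty$, so that distances computed by paths staying inside $K$ converge; the real content is to show (a) the Gromov--Hausdorff limit ``sees'' no extra collapsing away from $S_X$, and (b) the set $E$ is small enough (Hausdorff codimension $\ge 2$, hence does not disconnect and does not create shortcuts) that $d_{g_\infty}$ on $M\backslash E$ agrees with the restricted limit metric $d_X$ on $X\backslash S_X$. First I would fix, for each $k$, an admissible metric on the disjoint union realizing the Gromov--Hausdorff convergence $(M,g_k)\to(X,d_X)$, giving $\varepsilon_k$-isometries $\phi_k:(M,g_k)\to(X,d_X)$ with $\varepsilon_k\to 0$. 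For a point $x\in M\backslash E$, choosing a small $g_\infty$-ball $B=B_{g_\infty}(x,r)$ with $\overline B$ compact in $M\backslash E$, the smooth convergence $g_k\to g_\infty$ on $\overline B$ shows that the $\phi_k$-images of a fixed such ball form a Cauchy sequence of subsets of $X$; I would define $f(x)$ as the limit of $\phi_k(x)$ (after passing to a subsequence, then checking independence of the subsequence via a diagonal argument using a countable exhaustion $K_i$). Extending $f$ to the metric completion $\overline{(M\backslash E,d_{g_\infty})}$ is then automatic once $f$ is shown to be $1$-Lipschitz, because a Cauchy sequence maps to a Cauchy sequence.

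The next block of steps establishes the metric comparison. On the one hand $f$ is $1$-Lipschitz: given $x,y\in M\backslash E$ and a path $\gamma$ from $x$ to $y$ in $M\backslash E$ of length close to $d_{g_\infty}(x,y)$, the path lies in some compact $K$, the lengths ${\rm length}_{g_k}(\gamma)\to{\rm length}_{g_\infty}(\gamma)$, and $d_X(f(x),f(y))=\lim d_{g_k}(x,y)\le\lim{\rm length}_{g_k}(\gamma)={\rm length}_{g_\infty}(\gamma)$. For the reverse inequality on $X\backslash S_X$ one must show that a near-minimizing $g_k$-geodesic between points staying away from a neighborhood of $S_X$ can be made to avoid a neighborhood of $E$: this is exactly where the Hausdorff-codimension-$\ge 2$ hypothesis on $E$ enters, via a capacity/cutoff argument (one builds Lipschitz cutoff functions vanishing near $E$ with small energy, using that $\dim_{\mathcal H}E\le n-2$, to perturb minimizing sequences off $E$ with negligible length cost), so that their lengths are asymptotically computed by $g_\infty$. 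This yields $d_X(f(x),f(y))\ge d_{g_\infty}(x,y)$ for $x,y\in M\backslash E$ with $f(x),f(y)\notin S_X$, hence $f$ is an isometric embedding of $(M\backslash E,d_{g_\infty})$ onto its image in $X\backslash S_X$, and locally (on $g_\infty$-convex balls) an honest isometry onto its image; injectivity on $M\backslash E$ follows.

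Finally I would establish surjectivity and that the image is exactly $X\backslash S_X$. Surjectivity onto $X$: given $z\in X$, pick $z_k\in M$ with $\phi_k(z_k)\to z$; if a subsequence of the $z_k$ stays in a fixed compact $K\subset M\backslash E$ it subconverges (in $g_\infty$) to some $x$ with $f(x)=z$, and otherwise $\operatorname{dist}_{g_k}(z_k,E)\to 0$, whence by definition $z\in S_X$. This simultaneously shows $f(M\backslash E)\supseteq X\backslash S_X$; the opposite inclusion $f(M\backslash E)\subseteq X\backslash S_X$ needs that the image of a fixed compact $K$ stays a definite distance from $S_X$, which follows because $\phi_k$ is an $\varepsilon_k$-isometry and points of $K$ have $g_k$-distance bounded below from $E$. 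Continuity of $f$ on the completion and compactness of $S_X$ (noted in the text) then give that $f$ is a continuous surjection restricting to a homeomorphism and local isometry $M\backslash E\to X\backslash S_X$. The main obstacle is the codimension-$\ge 2$ shortcut-exclusion step: controlling near-minimizing $g_k$-curves uniformly in $k$ so that they neither pass through $E$ nor gain length, which requires combining the uniform Ricci/volume/diameter bounds (to get uniform local geometry, e.g. via Bishop--Gromov and segment-type inequalities) with the smallness of $E$; the rest is bookkeeping with $\varepsilon_k$-isometries and diagonal subsequences.
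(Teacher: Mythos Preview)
Your overall architecture (define $f$ via $\epsilon_k$-approximations, prove the $1$-Lipschitz direction with paths inside compact $K\subset M\backslash E$, then handle surjectivity) is reasonable and parallels the paper. The genuine gap is your reverse-inequality step.

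First, the target is wrong: you are trying to prove that $d_X(f(x),f(y))\ge d_{g_\infty}(x,y)$ for all $x,y\in M\backslash E$, i.e.\ that $f$ is a \emph{global} isometric embedding. Theorem~\ref{t2.01} only asserts a \emph{local} isometry, and the global inequality can fail under the stated hypotheses: $g_k$ is completely uncontrolled near $E$, so $g_k$-geodesics between fixed $x,y\in M\backslash E$ may take shortcuts through a neighborhood of $E$, giving $\lim_k d_{g_k}(x,y)<d_{g_\infty}(x,y)$. (This is exactly what happens in the intended applications, where $E$ is an exceptional set being contracted.) Second, the proposed mechanism is not valid: cutoff/capacity arguments built from $\dim_{\mathcal H}E\le n-2$ control Sobolev \emph{functions}, not $1$-dimensional curves; and even if one could push a $g_k$-geodesic off $E$, you would only move it into a small neighborhood of $E$ where $g_k$ need not be close to $g_\infty$, so there is no length control. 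The step you flag as ``the main obstacle'' is therefore not merely hard but, as stated, false.

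The paper sidesteps this entirely. It never attempts the global reverse inequality. Instead it builds, via carefully chosen $\epsilon_j$-nets in $(M\backslash E,g_k)$ kept a definite $g_k$-distance from $E$, a bijection between dense subsets $A\subset X\backslash S_X$ and $Y\subset M\backslash E$, and checks the local isometry on small balls $B_{d_X}(x_{i_s},r)$ with $\bar B\cap S_X=\emptyset$: there, minimizing curves for both metrics stay inside a fixed compact $K'\subset M\backslash E$, where $g_k\to g_\infty$ smoothly, so the two length structures coincide. For the extension, the paper explicitly observes that extending the map in the direction $X\to \overline{(M\backslash E,d_{g_\infty})}$ would require $\dim_{\mathcal H}S_X\le\dim_{\mathcal H}X-2$, which is \emph{not} known at this stage; it therefore extends only in the direction $\overline{(M\backslash E,d_{g_\infty})}\to X$, where the $1$-Lipschitz inequality (your easy direction) suffices. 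The codimension-two information about $S_X$, and the resulting global distance comparison, is obtained only later (Lemma~\ref{l3.1}) using Cheeger--Colding structure theory, not the bare hypotheses of Theorem~\ref{t2.01}.
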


\vskip2mm

\begin{proof} We first construct a dense subset $A\subseteq X\backslash
S_X$ and define a local isometric embedding $h: (A, d_{X})\to
(M\backslash E,d_{g_\infty})$ such that $f(A)$ is dense. Then we
will show that $f=h^{-1}: h(A)\to X\backslash S_X$ extends uniquely
to a continuous surjection $f: \overline{(M\backslash
E,g_\infty)}\to (X,d_X)$ such that $f$ is a homeomorphism and a
local isometric embedding on $(M\backslash E,d_{g_\infty})$.

Without loss of generality we may assume that for all $k\ge j$,
$d_{GH}((M,g_k),(M,g_j))<2^{-j}$. Let $\phi_j: (M,g_{j+1})\to
(M,g_j)$ denote an $2^{-j}$-Gromov-Hausdorff approximation. Then
$\phi_j^{j+s}=\phi_j\circ \cdots \circ \phi_{j+s-1}: (M,g_{j+s})\to
(M,g_{j+s-1})\to \cdots \to (M,g_j)$ is an
$2^{-j+1}$-Gromov-Hausdorff approximation. Recall  that there is an
admissible metric $d_Z$ on the disjoint union
$Z=(\coprod\limits_{k=1}^\infty(M,g_k))\coprod (X,d_X)$ such that
$(M,g_k)\stackrel{d_{Z,H}}\longrightarrow  (X,d_X)$ (see Appendix A,
Proposition A.1).

Let $\epsilon_j=j^{-1}$, $j=1, 2, \cdots $. For $\epsilon_1$ and
each $g_k$, take a finite $\epsilon_1$-net  $\{x^k_{i_1}\}\subset
(M\backslash E,g_k)$ such that
\begin{equation}\label{2.1.1}|\{x^k_{i_1}\}|\leq c_1', \ \ {\rm  and } \end{equation}
\begin{equation}\label{2.1.2}d_{g_k}(\{x^k_{i_1}\},E)\ge \frac{\epsilon_1}2,
\ \ {\rm where } \ \ d_{g_k}(\{x^k_{i_1}\},E)=\min
\{d_{g_k}(x^k_j,y), \,\, x^k_j\in \{x^k_{i_1}\}, \, y\in
E\}.\end{equation} We may assume, passing to a subsequence if
necessary, that $\{x^k_{i_1}\}\stackrel{d_{H,Z}}\longrightarrow
 \{x_{i_1}\}_{i_1=1}^{c_1}\subset (X,d_X)$, where $c_1=| \{x_{i_1}\}| $.
Then by (\ref{2.1.2}), $\{x_{i_1}\}_{i_1=1}^{c_1}\subset X\backslash
S_X$. We claim that there is $\bar{k}_1>0$ such that for all $k\ge
\bar{k}_1$, $\{\phi_{\bar{k}_1}^k(x^k_{i_1})\}\subset K_1=M
\backslash B_{g_{\bar{k}_1}}(E,\frac{\epsilon_1}4)$, a compact
subset. Here $B_{g_{\bar{k}_1}}(E,\frac{\epsilon_1}4)=\{y\in M|
d_{g_{\bar{k}_1}}(y,E)< \frac{\epsilon_1}4\}$. Assuming the claim,
by iii) we may assume that passing to a subsequence
$\{\phi^k_{\bar{k}_1}(x^k_{i_1})\}\to
\{y_{i_1}\}_{i_1=1}^{c_1}\subset (M\backslash E,g_\infty)$
point-wise, and we denote the corresponding subsequence by
$\{g_{k_1}\}\subset \{g_k\}$.

To verify the claim, we may assume $\bar{k}_1$ large so that for all
$k\ge \bar{k}_1$,
$$d_{Z,H}(\{x^k_{i_1}\},\{x_{i_1}\})<\frac{\epsilon_1}9,\qquad 2^{-\bar{k}_1}
\ll\epsilon_1.$$ For the sake of distinction, let $E_0=E\subset
(M,g_{\bar{k}_1})$. Then
\begin{eqnarray*} d_{g_{\bar{k}_1}}(\{\phi_{\bar{k}_1}^k(x^k_{i_1})\},E_0)&= & d_{Z,H}
(\{\phi_{\bar{k}_1}^k(x^k_{i_1})\},E_0)\\ & \ge &
d_{Z,H}(\{x^{\bar{k}_1}
_{i_1}\},E_0)-d_{Z,H}(\{x^{\bar{k}_1}_{i_1}\},\{\phi_{\bar{k}_1}^k(x^k_{i_1})\})\\&
\ge & \frac{\epsilon_1}2-[d_{Z,H}(\{x^{\bar{k}_1}_{i_1}\},
\{x_{i_1}\})+d_{Z,H}(\{\phi_{\bar{k}_1}^k(x^k_{i_1})\},\{x_{i_1}\})]\\&
\ge &
\frac{\epsilon_1}2-\left[\frac{\epsilon_1}9+d_{Z,H}(\{\phi_{\bar{k}_1}^k(x^k_{i_1})\},
\{x^k_{i_1}\})+d_{Z,H}(\{x^k_{i_1}\},\{x_{i_1}\})\right]\\&\ge &
\frac{\epsilon_1}2-\left(\frac{\epsilon_1}9+\frac{\epsilon_1}9+2^{-\bar{k}_1}\right)
\ge \frac{\epsilon_1}4.\end{eqnarray*}

For $\epsilon_2$ and each $g_{k_1}$, extend $\{x^{k_1}_{i_1}\}$ to
an $\epsilon_2$-dense subset of $(M\backslash E,g_{k_1})$,
$\{x^{k_1}_{i_1}\}\subset \{x^{k_1}_{i_2}\}$, such that for all
$g_{k_1}$  \begin{equation}\label{2.2.1}
d_{g_{k_1}}(x^{k_1}_{i_2},x^{k_1}_{i_2'} )\geq \frac{\epsilon_2}{4},
\ \ \  |\{x^{k_1}_{i_2}\}|\leq c_2', \ \ {\rm and } \end{equation}
\begin{equation}\label{2.2.2}d_{g_{k_1}}(\{x^{k_1}_{i_2}\},E)\ge
\frac{\epsilon_2}2.  \end{equation} Similarly, by (\ref{2.2.1}) and
(\ref{2.2.2}), passing to a subsequence we may assume that
$\{x^{k_1}_{i_2}\}\stackrel{d_{H,Z}}\longrightarrow
\{x_{i_2}\}\subset (X\backslash S_X,d_X)$. Clearly, $\{x_{i_1}\}
\subset \{x_{i_2}\}_{i_2=1}^{c_2}$, where $c_2=|\{x_{i_2}\}| $. By
the argument as in the above, we may assume large $\bar{k}_2>
\bar{k}_1$ such that for all $k\ge \bar{k}_2$,
$\{\phi_{\bar{k}_2}^k(x^k_{i_2})\}\subset K_2=(M\backslash
B_{g_{k_1}}(E,\frac {\epsilon_2}4))$. By the compactness of $K_2$
and iii), we may assume that
$\{\phi_{\bar{k}_2}^k(x^k_{i_2})\}\to
\{y_{i_2}\}_{i_2=1}^{c_1}\subset (M\backslash E,d_{g_\infty})$ point-wise. The
natural identification $\phi_{\bar{k}_1}^k(x^k_{i_1})\leftrightarrow
\phi_{\bar{k}_2}^k(x^k_{i_1})$ induces an injective map,
$\{y_{i_1}\}\hookrightarrow \{y_{i_2}\}$.

Repeating this process and together with a standard diagonal
argument, we obtain a sequence of finite subsets of $(X\backslash
S_X,d_X)$:
$$\{x_{i_1}\}_{i_1=1}^{c_1}\subset \cdots \subset \{x_{i_s}\}_{i_s=1}^{c_s}
\subset \cdots,$$ and a sequence of finite subsets of $(M\backslash
E,g_\infty)$:
\begin{equation}\label{4.2.1++}\{y_{i_1}\}_{i_1=1}^{c_1}\hookrightarrow
\cdots \hookrightarrow \{y_{i_s}\}_{i_s=1}^{c_s} \hookrightarrow
\cdots.\end{equation} Let $A=\bigcup\limits_{s=1}^\infty
\{x_{i_s}\}_{i_s=1}^{c_s}$, and $A_{k_l}=\bigcup\limits_{s=1}
^\infty\{x^{k_l}_{i_s}\}_{i_s=1}^{c_s}$. Since $A_{k_l}$ is dense in
$(M\backslash E,g_{k_l})$ for all $k_l$, $A\subset (X\backslash
S_{X},d_X)$ is a dense subset. Let $Y$ denote the direct limit of
(\ref{4.2.1++}). Then $Y\subseteq M-E$. We now define a map, $f:
A\to (M\backslash E,g_\infty)$ by
$$f(x_{i_s})=[y_{i_s}]=\{y_{i_s}\rightarrow \cdots \rightarrow \cdots \}.$$
It is clear that $f$ is injective since $f$ is injective on each
$\{x_{i_s}\}_{i_s=1}^{c_s}$, and $f(A)$ is dense in $(M\backslash
E,g_\infty)$. From the construction of $f$, we see that $f$ is a
local isometric embedding: for $x\in A$ we may assume that
$x=x_{i_s}$. Since $x_{i_s}\notin S_X$ which is compact subset of
$X$, there is a $r>0$ such that $\bar B_{d_{X}}(x_{i_s},r)\cap
S_X=\emptyset$. Recall that we may assume $\bar{k}_{v}$ large and
$\phi_{\bar{k}_{v}}^{k_l}(x^{k_l}_{i_s}) \subset K$ and
$\phi_{\bar{k}_{v}}^{k_l}(x^{k_l}_{i_s})\to y_{i_s}$ point-wise
with respect to $d_{g_{\infty}}$, where $K\subset M$ is compact such
that $K\cap E=\emptyset$. Clearly, we may assume that $r$ small and
a compact subset $K'\supseteq K$ such that
$B_{g_\infty}([y_{i_s}],r)\subset K'$ and $K'\cap E=\emptyset$. By
iii), $(K',g_{k_l})\to (K',g_\infty)$ in the $C^{\infty}$-sense.
Observe the following two facts:

\noindent (4.5) For $z, z'\in B_{g_\infty}([y_{i_s}],\frac r2)$, any
$g_\infty$-minimal geodesic from $z$ to $z'$ is contained in
$B_{g_\infty}([y_{i_s}],r)$.

\noindent (4.6) $d_{g_\infty}|_{B_{g_\infty}([y_{i_s}],\frac r2)}$
(resp. $d_X|_{B(x,\frac r2)}$) is determined by the lengths of
curves in $B_{g_\infty}([y_{i_s}],r)$ (resp. $B_{d_X}(x_{i_s},r)$).
The two length structures coincide, because $(K',g_k)\to
(K',g_\infty)$ in the $C^\infty$ sense.  As a consequence of (4.5)
and (4.6), we conclude that
$$f: \left(B_{d_X}\left(x_{i_s},\frac r2\right),d_X|_{B_{d_X}\left(x_{i_s},\frac r2\right)}\right)\to \left(B_{g_\infty}
\left([y_{i_s}],\frac r2\right),d_{g_\infty}|_{B_{d_{g_\infty}}
\left([y_{i_s}],\frac r2 \right)}\right)$$ is an isometry.

To uniquely extend $f: A\to (M\backslash E,g_\infty)$ to a
continuous surjection $f: (X,d_X)\to\overline{(M\backslash
E,d_{g_\infty})}$, one needs to show that $\{x_j\},\{y_j\}\subset A$
such that $d_X(x_j,y_j) \to 0$ implies that
$d_{g_\infty}(f(x_j),f(y_j))\to 0$; which may require that
$S_X\subset X$ has codimension at least $2$. Because we do not know
whether $\dim_{\mathcal{H}}(S_X)\le \dim_{\mathcal{H}}(X)-2$, we
will instead extend $f^{-1}: f(A)\to A$ to a continuous map,
$f^{-1}: \overline{(M\backslash E,d_{g_\infty})}\to (X,d_X)$. So, we
may assume that $\{x_j\}, \{y_j\}\subset f(A)$ such that
$d_{g_\infty}(x_j,y_j)\to 0$. Since $d_{g_\infty}$ is a length
metric, there is a path $\gamma_i\subset M\backslash E$ from $x_j$
to $y_j$ such that ${\rm
length}_{d_{g_\infty}}(\gamma_i)=d_{g_\infty}(x_j,y_j)+\delta_j$ and
$\delta_j\to 0$. Since $f^{-1}: (M\backslash E,d_{g_\infty})\to
(X\backslash S_X,d_X)$ is a local isometric embedding,
$$d_X(f^{-1}(x_j),f^{-1}(y_j))\le {\rm
length}_{d_{X}}(f(\gamma_i))={\rm
length}_{d_{g_\infty}}(\gamma_i)=d_{g_\infty} (x_j,y_j)+\delta_j\to
0.$$
\end{proof}

\vskip4mm

\section{Proofs  of   Theorem \ref{t1.1}, Theorem \ref{t1.01+} and Corollary \ref{c1.2}}
\vskip2mm

 Let  $M_{0}$ be a  Calabi-Yau $n$-variety with singular set $S$ which  admits a
crepant resolution $(\bar{M}, \bar{\pi})$,  and let
$\mathcal{L}_{0}$ be an ample line bundle on $M_{0}$. Note that
there is an embedding $M_{0}\hookrightarrow \mathbb{CP}^{N} $ such
that $\mathcal{L}_{0}^{m}=\mathcal{O}(1)|_{M_{0}}$ for an $m\geq 1$,
and that  the restriction of the Fubini-Study metric
$\omega_{FS}|_{M_{0}} $ represents $ m c_{1}(\mathcal{L}_{0}) $ in
$H^{1}(M_{0}, \mathcal{PH}_{M_{0}})$. By  Theorem 7.5 of \cite{EGZ},
there is a unique Ricci-flat K\"{a}hler metric $g$ on $M_{0} $ with
K\"{a}hler form $\omega \in  c_{1}(\mathcal{L}_{0})$. Let
$\{\bar{g}_{s}\}$ ($s\in (0, 1] $) be a family of Ricci-flat
  K\"{a}hler metrics with  K\"{a}hler classes
   $\lim\limits_{s\rightarrow 0}[\bar{\omega}_{s}]=\bar{\pi}^{*} c_{1}(\mathcal{L}_{0})$
   in $H^{1,1}(\bar{M}, \mathbb{R} )$, where $\bar{\omega}_{s}$ denotes
   the corresponding K\"{a}hler form of  $ \bar{g}_{s}$. Then
    \begin{equation}\label{e04.1}\lim_{s\rightarrow 0}\Vol_{\bar{g}_{s}}(\bar{M})=\frac{1}{n!}
 c_{1}^{n}(\mathcal{L}_{0})=\frac{1}{m^{n}n!}\int_{M_{0}}\omega_{FS}^{n}>0. \end{equation} Furthermore,
 it is  proved in \cite{To}  that
$$ \bar{g}_{s} \longrightarrow  \bar{\pi}^{*} g, \
 \  \ {\rm
and} \ \ \bar{\omega}_{s} \longrightarrow   \bar{\pi}^{*} \omega, \
\ \ {\rm when} \ \
 s\rightarrow 0,$$ in the $C^{\infty}$-sense  on any compact subset $K\subset\subset
 \bar{M}\backslash \bar{\pi}^{-1}(S)$.  By  \cite{RZ} and
 \cite{To}, the diameter  of $(\bar{M}, \bar{g}_{s})$ has a uniform
 bound, i.e.,   \begin{equation}\label{e04.2000+}{\rm diam}_{\bar{g}_{s}}(\bar{M})\leq C \end{equation} where
 $C$ is a constant
 independent of $s$.
By the Bishop-Gromov relative volume  comparison  and  (\ref{e04.1}),
$(\bar{M},\bar{g}_{s})$ is non-collapsed, i.e.,  there is a constant
$\kappa >0$ independent of $s$  such
  that  \begin{equation}\label{e04.2}
  {\rm Vol}_{\bar{g}_{s}}(B_{\bar{g}_{s}}(p,  r))\geq \kappa  r^{2n}, \end{equation} for any metric
  ball $B_{\bar{g}_{s}}(p,  r)\subset (M,\bar{g}_{s})$.  Gromov's  pre-compactness theorem (cf. \cite{G1})  implies that,
 for any sequence $s_{k}\rightarrow 0$, a subsequence of
 $(\bar{M},\bar{g}_{s_{k}})$ converges to a compact length metric space $(X, d_{X})$ in the
Gromov-Hausdorff topology.  First, we explore some metric properties
of $(X, d_{X})$.   \vskip2mm

\begin{lemma}\label{l3.1} Let   $(X, d_{X})$ be as in the above. Then  the following properties hold:
 \begin{itemize}\label{00}
  \item[i)]  there is a closed subset $S_{X}\subset X$ of Hausdorff
  dimension $\dim_{\mathcal{H}}S_{X} \leq 2n-4$, and $(X\backslash S_{X},
    d_{X}|_{X\backslash S_{X}})$ is a path metric space,
    i.e., for any
  $\delta>0$ and  any two points $x_{1}, x_{2}\in X\backslash S_{X}$,
  there is a cure $\gamma_{\delta}\subset X\backslash S_{X}$
  connecting $x_{1}$ and $ x_{2}$ satisfying $$
  \leng_{d_{X}}(\gamma_{\delta})\leq d_{X}(x_{1}, x_{2})+\delta,$$
   \item[ii)] there is a homeomorphic  local isometry $f: (X\backslash S_{X},d_X)\to (M_{0}\backslash
   S,d_{g})$, i.e.,    for  $x\in  X\backslash S_{X}$, there is
   an open subset
  $U_{x}\subset\subset X\backslash S_{X}$ such that for any $x_{1}$,  $x_{2}\in U_{x}$,
  $d_{X}(x_{1},x_{2})=d_{g}(f(x_{1}),f(x_{2})).$
   \item[iii)]  $(X, d_{X})$ is isometric to the metric completion
   $\overline{(M_{0}\backslash S,d_{g})} $.
   \end{itemize}
 \end{lemma}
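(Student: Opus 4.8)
The plan is to derive Lemma~\ref{l3.1} from the almost gauge fixing Theorem~\ref{t2.01} applied to the sequence $(\bar M,\bar g_{s_k})$, transporting its conclusion to $M_0\setminus S$ through the biholomorphism $\bar\pi$. First I take $M=\bar M$ (so $\dim_{\mathbb R}\bar M=2n$), $g_k=\bar g_{s_k}$, $E=\bar\pi^{-1}(S)$ and $g_\infty=\bar\pi^{*}g$ on $\bar M\setminus E$; the hypotheses preceding Theorem~\ref{t2.01} are precisely the facts already collected above in this section. Indeed $\Ric(g_k)\equiv 0$; by (\ref{e04.1}) the volumes converge to $\frac{1}{n!}c_{1}^{n}(\mathcal{L}_{0})>0$ so $\Vol_{g_k}(\bar M)\ge V>0$ for $k$ large; by (\ref{e04.2000+}) the diameters are uniformly bounded; $(\bar M,g_k)\to(X,d_X)$; $E$ is a proper closed analytic subset of $\bar M$, so $\dim_{\mathcal H}E=2\dim_{\mathbb C}E\le 2n-2=\dim_{\mathbb R}\bar M-2$; and $g_k\to g_\infty$ in $C^{\infty}$ on compact subsets of $\bar M\setminus E$ by the result of \cite{To} recalled above.

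Theorem~\ref{t2.01} then yields a closed, hence compact, subset $S_X\subset X$ --- the set of $d_X$-limits of points of $E$ --- and a continuous surjection $q_0:\overline{(\bar M\setminus E,d_{g_\infty})}\to(X,d_X)$ restricting to a homeomorphism and local isometry $(\bar M\setminus E,d_{g_\infty})\to(X\setminus S_X,d_X)$. Because $\bar\pi$ carries $\bar M\setminus E$ biholomorphically onto $M_0\setminus S$ with $\bar\pi^{*}g=g_\infty$, it is a Riemannian isometry $(\bar M\setminus E,g_\infty)\to(M_0\setminus S,g)$, hence an isometry of the induced length metrics, and it extends to an isometry $\overline{(\bar M\setminus E,d_{g_\infty})}\cong\overline{(M_0\setminus S,d_g)}$. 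Composing, $q_0$ becomes a continuous surjection $q:\overline{(M_0\setminus S,d_g)}\to(X,d_X)$ with $q(M_0\setminus S)=X\setminus S_X$; inverting the restriction of $q$ gives the homeomorphism and local isometry $f:(X\setminus S_X,d_X)\to(M_0\setminus S,d_g)$ of ii). Moreover $X\setminus S_X=q(M_0\setminus S)$ is dense in $X$, since $M_0\setminus S$ is dense in $\overline{(M_0\setminus S,d_g)}$ and $q$ is a continuous surjection.

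The one point that does not come for free from Theorem~\ref{t2.01} --- which only gives $\dim_{\mathcal H}S_X\le 2n-2$ --- is the sharper bound $\dim_{\mathcal H}S_X\le 2n-4$. Conceptually this is because the fibers of $\bar\pi$ over $S$ collapse along the family: the limiting class $\bar\pi^{*}c_{1}(\mathcal{L}_{0})$ pairs to $0$ with every curve $C$ contained in a fiber $\bar\pi^{-1}(p)$, $p\in S$, so $\int_{C}\bar\omega_{s_k}\to 0$ and ${\rm diam}_{\bar g_{s_k}}(\bar\pi^{-1}(p))\to 0$; hence $E$ contracts, metrically, onto a set no larger than $S$ itself, which has complex codimension $\ge 2$ in $M_0$ by normality. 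To turn this into a proof I would show a uniform volume estimate $\Vol_{\bar g_{s_k}}\!\big(B_{\bar g_{s_k}}(E,r)\big)\le Cr^{4}$ for all small $r$ --- the step where the codimension of $S$ and the vanishing of the mass of $\omega^{n}$ near $S$ enter --- pass to the Gromov--Hausdorff limit to obtain $\mathcal H^{2n}\big(B_{d_X}(S_X,r)\big)\le C'r^{4}$, and combine with the non-collapsing bound $\mathcal H^{2n}(B_{d_X}(x,\rho))\ge\kappa\rho^{2n}$ from (\ref{e04.2}) through a Vitali covering argument to conclude $\mathcal H^{2n-4}(S_X)<\infty$. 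I expect this volume bound near the exceptional locus to be the main obstacle.

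Granting the dimension bound, i) and iii) follow quickly. Since $\dim_{\mathcal H}S_X\le 2n-4<2n-1$, $S_X$ is $\mathcal H^{2n-1}$-null, so excising it does not alter the length structure of the compact length space $(X,d_X)$; thus $(X\setminus S_X,d_X|_{X\setminus S_X})$ is again a path metric space, giving i). For iii): by i), $d_X|_{X\setminus S_X}$ is an intrinsic (length) metric, so the bijective local isometry $f$ of ii) preserves lengths of curves and is therefore a global isometry $(X\setminus S_X,d_X)\to(M_0\setminus S,d_g)$. As $X$ is compact --- hence complete --- with $X\setminus S_X$ dense, and $M_0\setminus S$ is dense in $\overline{(M_0\setminus S,d_g)}$, $f$ extends uniquely to an isometry $X=\overline{X\setminus S_X}\cong\overline{(M_0\setminus S,d_g)}$, which is iii).
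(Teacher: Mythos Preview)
Your reduction to Theorem~\ref{t2.01} via $\bar\pi$ is correct and matches the paper's route to ii) and iii). The real issue, as you anticipate, is the bound $\dim_{\mathcal H}S_X\le 2n-4$, and here your plan diverges from the paper and carries a genuine gap. The uniform tubular estimate $\Vol_{\bar g_{s_k}}\!\big(B_{\bar g_{s_k}}(E,r)\big)\le Cr^{4}$ you propose is not available from the ingredients at hand: you have no pointwise control of $\bar g_{s_k}$ near $E$, and the cohomological fact that $\int_{E_i}\bar\omega_{s_k}^{\dim E_i}\to 0$ does not by itself bound the volume of an $r$-tube (tubes can be long and thin without the volume of the core being small in a controlled way). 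Also, Theorem~\ref{t2.01} does not give $\dim_{\mathcal H}S_X\le 2n-2$; it says nothing about the dimension of $S_X$, so even your fallback bound is unjustified.

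The paper avoids any tubular volume estimate by bringing in the Cheeger--Colding--Tian structure theory for non-collapsed Ricci-flat limits: there is a priori a closed set $S'\subset X$ with $\dim_{\mathcal H}S'\le 2n-4$ (the set of non-Euclidean tangent cones), and the path-metric property of $X\setminus S'$ (hence i)) comes from \cite{CC2}. The work is then to show $S_X=S'$. The inclusion $S'\subseteq S_X$ is immediate from ii). For the reverse, suppose $x\in S_X\setminus S'$; then near $x$ the limit is smooth, so by volume convergence and Anderson's harmonic-radius/regularity one gets uniform sectional-curvature and injectivity-radius bounds for $\bar g_{s_k}$ at the approximating points $\bar x_k\in E$. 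The paper's Theorem~\ref{p2+.1} (a monotonicity lower bound for the volume of a complex subvariety through a point with such curvature control) then gives $\Vol_{\bar g_{s_k}}(\bar\pi^{-1}(S))\ge C>0$ uniformly. This contradicts the cohomological computation $\Vol_{\bar g_{s_k}}(\bar\pi^{-1}(S))=\sum_i\frac{1}{(\dim E_i)!}\int_{E_i}\bar\omega_{s_k}^{\dim E_i}\to 0$, since $[\bar\omega_{s_k}]\to\bar\pi^{*}c_1(\mathcal L_0)$ vanishes on exceptional classes. So the key idea you are missing is to exploit the \emph{smooth} structure of $X\setminus S'$ to get curvature bounds back on $\bar M$, and then use that $E$ is a \emph{calibrated} (complex) subvariety to force a uniform positive lower volume bound, which clashes with the degeneration of the K\"ahler class.
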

 \vskip2mm

  \begin{proof}  Applying  general theorems in  \cite{CC1},   \cite{CT} and
  \cite{CCT} to our situation, i.e.,  $(\bar{M}, \bar{g}_{s_{k}})
\stackrel{d_{GH}}\longrightarrow (X, d_{X})$,   we see the following
properties:
\begin{itemize}\label{00}
  \item[i)]   there is a closed subset $S'\subset X$ of Hausdorff
  dimension $\dim_{\mathcal{H}}S' \leq 2n-4$ such  that
  for any $x\in S'$, there is a tangent cone $T_{x}X$ that is    not
  isometric to $\mathbb{R}^{2n} $.
  \item[ii)] $X\backslash S'$ is a smooth open  complex manifold,  and $d_{X}|_{X\backslash S'}$ is induced
  by a Ricci-flat K\"{a}hler metric $g_{\infty}$ on $X\backslash S'
  $.\end{itemize}
    From   Section 3 of  \cite{CC2}, we see that  for any $x_{1}, x_{2}\in X\backslash S'$,
    and any $\delta >0$, there is  a  curve $\gamma_{\delta}$ connecting $x_{1}$ and $
 x_{2}$ in  $X\backslash S'$ such that $$ {\rm length}_{d_{X}}(\gamma_{\delta}) \leq
  \delta
  +d_{X}(x_{1},x_{2}).$$

  Note that $\bar{\pi}^{-1}(S)$ is a  finite disjoint
   union of complex subvarieties  $E_{i}$, i.e.,  $\bar{\pi}^{-1}(S)=\coprod E_{i}$.
   If  $S_X\subset X$ denotes  the subset consisting of points $x\in X$
such that for each $k$  there is an   $\bar{x}_k$ in  the smooth
part of $ \bar{\pi}^{-1}(S) \subset (\bar{M},\bar{g}_{s_{k}})$ and
  $\bar{x}_k\to x$ under the Gromov-Hausdorff convergence of
$\{(\bar{M},\bar{g}_{s_{k}})\}$  to $(X, d_{X})$, then  by Theorem
\ref{t2.01}  there is a homeomorphic  local isometry $f:
(X\backslash S_{X},d_X) \rightarrow (M_{0}\backslash S,g)$.  Thus,
for any $x\in X\backslash S_{X}$, the tangent cone $T_{x}X $ is
unique and  isometric to $\mathbb{R}^{2n} $, which implies that
$X\backslash S_{X}\subseteq X\backslash S' $, i.e.,  $ S'\subseteq
S_{X}$.

 We claim that  $S_{X}=  S' $.
If false, there is a  $x\in S_{X}\backslash S'$ and there is a
$\sigma>0$ such  that the metric ball $ B_{g_{\infty}}
(x,\sigma)\subset X\backslash S'$.
 By  the volume
    convergence theorem due  to  Cheeger and Colding  (cf. \cite{C},
    \cite{CC1}) and from $\bar{x}_{k}\rightarrow
x $,  we derive that for any $0< \rho\leq \sigma$,
    $$\lim_{k\rightarrow\infty}{\rm Vol}_{\bar{g}_{s_{k}}}(B_{\bar{g}_{s_{k}}}(\bar{x}_{k},\rho))
    ={\rm
    Vol}_{g_{\infty}}(B_{g_{\infty}}(x,\rho)).$$  Since $g_{\infty}$
    is a smooth metric, $\lim\limits_{\rho\rightarrow 0}|
    \frac{{\rm
    Vol}_{g_{\infty}}(B_{g_{\infty}}(x,\rho))}{\varpi_{2n}\rho^{2n}}-1|=0$,
    where $\varpi_{2n}$ denotes the volume of the  metric 1-ball in the
     Euclidean space $\mathbb{R}^{2n}$.  Thus for any
    $\varepsilon>0$ we can find a  $\rho \ll 1$ and a  $k(\rho)\gg 1$ such
    that for any $k\geq k(\rho)$ we have  $$\left|
    \frac{{\rm Vol}_{\bar{g}_{s_{k}}}(B_{\bar{g}_{s_{k}}}(\bar{x}_{k},\rho))
    }{\varpi_{2n}\rho^{2n}}-1\right|\leq \varepsilon.$$ By the proof of  Theorem
    3.2 in \cite{An2},  we see that  there is a uniform  lower bound $0< \rho_{h} < \rho$
     (independent of $s_{k}$)  for  the
    harmonic radius  of $\bar{g}_{s_{k}}$ at  $\bar{x}_{k}$ i.e.,
        there are harmonic coordinates $
    h^{1}, \cdots, h^{2n}$ on
    $B_{\bar{g}_{s_{k}}}(\bar{x}_{k},\rho_{h})$ such that  $\bar{g}_{s_{k}}
    =\sum\limits_{ij} \bar{g}_{s_{k},ij}dh^{i}dh^{j}$, $$2^{-1}(\delta_{ij})\leq ( \bar{g}_{s_{k},ij})
     \leq 2 ( \delta_{ij}),
     \ \ \ \ \ \ \rho^{1+\alpha}\| \bar{g}_{s_{k},ij}\|_{C^{1,\alpha}}\leq2,
     $$ where $\alpha\in(0,1) $. Furthermore, by Ricci flatness
     there are constants $C_{l}>0$ independent of $k$  such that $$\| \bar{g}_{s_{k},ij}\|_{C^{l}}\leq
    C_{l}  $$ on  $B_{\bar{g}_{s_{k}}}(\bar{x}_{k},
\frac{\rho_{h}}{2})$ (cf. Section 4  in   \cite{An0}). Hence   the
sectional curvature ${\rm Sec}_{\bar{g}_{s_{k}}} $ of
   $\bar{g}_{s_{k}}$ on $B_{\bar{g}_{s_{k}}}(\bar{x}_{k},
\frac{\rho_{h}}{2})$ and the injectivity radius
$i_{\bar{g}_{s_{k}}}(\bar{x}_{k})$ have uniform bounds,
$$\sup_{B_{\bar{g}_{s_{k}}}(\bar{x}_{k},
\frac{\rho_{h}}{2})}|{\rm Sec}_{\bar{g}_{s_{k}}} |\leq \Lambda, \ \
\
 \ \ \ i_{\bar{g}_{s_{k}}}(\bar{x}_{k}) >\iota,$$ where $\Lambda$ and $\iota$ are
  two
constants  independent of $k$.

 In the rest of proof of Lemma \ref{l3.1},   we need the
following theorem.
 \vskip2mm
\begin{theorem}\label{p2+.1}
Let $(M,  g, \omega)$ be   a complete   K\"{a}hler   $n$-manifold,
    and $p\in M$. Assume that the sectional curvature ${\rm Sec}_{g}$
  satisfies $$ \sup_{B_{g}(p, \frac{2\pi}{\sqrt{\Lambda}})}{\rm Sec}_{g}\leq \Lambda, \ \ \ \ \Lambda>0, $$ and there is a
   complex subvariety  $E$ of dimension $m\leq n$ such that  $p$ belongs
   to the regular part of $
  E$.  Then  $${\Vol}_{g}(B_{g}(p,r)\cap E)\geq \varpi r^{2m},
  $$ for any $r\leq \min \{i_{g}(p), \frac{\pi}{2\sqrt{\Lambda}}\}$, where $i_{g}(p)$ denotes the injectivity radius
  of $g$ at $p$, and $\varpi=\varpi(m,\Lambda)$ is a constant depending only on $m$ and
  $\Lambda$.
\end{theorem}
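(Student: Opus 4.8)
The plan is to exploit the fact that the regular part of a complex analytic subvariety of a K\"ahler manifold is a minimal submanifold, and to combine this with a monotonicity formula carrying a curvature correction controlled by $\Lambda$. Set $r_{0}=\min\{i_{g}(p),\frac{\pi}{2\sqrt{\Lambda}}\}$. First I would record two structural facts about $E$ inside the geodesic ball $B_{g}(p,r_{0})$: its regular part $E_{\mathrm{reg}}$ is a smooth complex submanifold of real dimension $2m$, and, since $(M,g,\omega)$ is K\"ahler, Wirtinger's inequality shows it is calibrated by $\frac{1}{m!}\omega^{m}$, hence minimal in $(M,g)$; its singular part $E_{\mathrm{sing}}$ has Hausdorff dimension $\le 2m-2$, so it has vanishing $\mathcal{H}^{2m}$-measure and $E\cap B_{g}(p,r_{0})$, taken with multiplicity one, defines a stationary integral $2m$-varifold, with density $\Theta(E,x)\ge 1$ everywhere on $E$ and $\Theta(E,p)=1$ because $p$ is a regular point.

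The core of the proof is the monotonicity formula for this varifold. On $B_{g}(p,r_{0})\setminus\{p\}$ the distance function $\rho=\dist_{g}(\cdot,p)$ is smooth, and the hypothesis ${\rm Sec}_{g}\le \Lambda$ on $B_{g}(p,\frac{2\pi}{\sqrt{\Lambda}})$ lets me apply the Hessian comparison theorem: $\Hess_{g}\rho\ge \sqrt{\Lambda}\cot(\sqrt{\Lambda}\rho)\,(g-d\rho\otimes d\rho)\ge 0$ there, with $\sqrt{\Lambda}\rho\cot(\sqrt{\Lambda}\rho)\ge 1-C_{0}\Lambda\rho^{2}$ for a universal $C_{0}$. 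Inserting the vector field $X=\rho\,\nabla_{g}\rho$ into the first variation identity
$$\int_{E\cap B_{g}(p,r)}\mathrm{div}_{E}X\,d\mathcal{H}^{2m}=\int_{E\cap\partial B_{g}(p,r)}\langle X,\eta\rangle\,d\mathcal{H}^{2m-1},$$
where $\eta$ is the outward unit conormal of $E\cap\partial B_{g}(p,r)$ in $E$, and bounding $\mathrm{div}_{E}X=|\nabla^{E}\rho|^{2}+\rho\,\tr_{E}\Hess_{g}\rho$ from below via the comparison inequality exactly as in the Euclidean derivation (where the quantity equals $2m$), one obtains, after a Cauchy--Schwarz step and the coarea formula, that
$$r\longmapsto e^{C_{m}\Lambda r^{2}}\,\frac{\Vol_{g}(E\cap B_{g}(p,r))}{r^{2m}}$$
is non-decreasing on $(0,r_{0}]$, for a constant $C_{m}$ depending only on $m$.

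Letting $r\to 0$, the monotone quantity converges to $\varpi_{2m}\,\Theta(E,p)=\varpi_{2m}$, where $\varpi_{2m}$ is the volume of the Euclidean unit ball in $\mathbb{R}^{2m}$; monotonicity then gives $\Vol_{g}(E\cap B_{g}(p,r))\ge e^{-C_{m}\Lambda r^{2}}\varpi_{2m}r^{2m}$ for all $r\le r_{0}$. Since $r_{0}\le \frac{\pi}{2\sqrt{\Lambda}}$, this yields $\Vol_{g}(E\cap B_{g}(p,r))\ge e^{-C_{m}\pi^{2}/4}\varpi_{2m}r^{2m}$, which is the asserted estimate with $\varpi(m,\Lambda)=e^{-C_{m}\pi^{2}/4}\varpi_{2m}$ (in fact independent of $\Lambda$). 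I expect the main obstacle to be the bookkeeping in the monotonicity formula rather than any conceptual point: one must justify that $E$ is genuinely a stationary varifold across its singular locus, so that the first variation identity carries no spurious interior boundary contribution from $E_{\mathrm{sing}}$, and one must organize the curvature error terms so that the degeneration of $\cot(\sqrt{\Lambda}\rho)$ as $\rho\to\frac{\pi}{2\sqrt{\Lambda}}$ is absorbed harmlessly into the exponential factor $e^{C_{m}\Lambda r^{2}}$.
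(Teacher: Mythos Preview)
Your argument is correct and proceeds via the standard first-variation route: stationarity of the integral varifold $E$, the Hessian comparison $\Hess_{g}\rho\ge\sqrt{\Lambda}\cot(\sqrt{\Lambda}\rho)(g-d\rho\otimes d\rho)$, and the test field $X=\rho\nabla\rho$ yield the differential inequality $r\Theta'(r)\ge 2m\bigl(1-C_{0}\Lambda r^{2}\bigr)\Theta(r)$ and hence the monotonicity of $e^{C_{m}\Lambda r^{2}}\Theta(r)/r^{2m}$. The issues you flag (stationarity across $E_{\mathrm{sing}}$, coarea) are standard and do not obstruct the proof.

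The paper takes a genuinely different path. Instead of stationarity plus Hessian comparison, it uses the stronger fact that a complex subvariety in a K\"ahler manifold is \emph{area-minimizing} (not merely stationary), and employs a \emph{cone competitor}: the geodesic cone $\mathcal{C}$ over $E\cap\partial B_{g}(p,r)$ bounds the same slice, so $\Theta(r)\le\mathcal{H}^{2m}(\mathcal{C})$. The area of $\mathcal{C}$ is controlled by the Rauch/Jacobi-field comparison $g_{\rho}\le\bigl(\sin\sqrt{\Lambda}\rho/\sin\sqrt{\Lambda}r\bigr)^{2}g_{r}$, giving $\Theta(r)\le\Theta'(r)\,\sin^{-(2m-1)}(\sqrt{\Lambda}r)\int_{0}^{r}\sin^{2m-1}(\sqrt{\Lambda}\rho)\,d\rho$ and hence monotonicity of $\Theta(r)/\!\int_{0}^{r}\sin^{2m-1}(\sqrt{\Lambda}\rho)\,d\rho$. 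This quantity is sharp in the model space-form and avoids the exponential correction factor, at the cost of invoking the calibration/minimizing property rather than mere minimality. Your approach applies verbatim to any stationary varifold (not only calibrated ones), while the paper's argument is closer to the classical Bishop--Federer slicing proof and yields a cleaner monotone ratio; both give the same qualitative bound, and indeed in both cases the final constant $\varpi$ turns out to depend only on $m$ once $r\le\pi/(2\sqrt{\Lambda})$ is imposed.
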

 \vskip2mm
Note that  similar volume  comparison results  were  obtained  for
smooth minimal
 submanifolds in  \cite{Ma} and  \cite{Go}, for  complex subvarieties  of
 $\mathbb{C}^{n}$ in \cite{GH1}, and for  minimal currents  in  $\mathbb{R}^{n}$
  (cf.  \cite{Mo}).  Since the authors could not find a
  proof of Theorem \ref{p2+.1} in the
literature, we shall  present a proof  at the end of this
  section.

  By Theorem
\ref{p2+.1}  and  taking $r=\min \{\iota,
\frac{\pi}{2\sqrt{\Lambda}}, \frac{\rho_{h}}{2}\}$, we obtain
$$\Vol_{\bar{g}_{s_{k}}}(\bar{\pi}^{-1}(S) )\geq  \Vol_{\bar{g}_{s_{k}}}(\bar{\pi}^{-1}(S) \cap
B_{\bar{g}_{s_{k}}}
  (\bar{x}_{k},r))>C,$$where   $C>0$ is  a constant independent of $k$. On
  the other hand, since $\lim\limits_{s\rightarrow 0}[\bar{\omega}_{s}]=\bar{\pi}^{*} c_{1}(\mathcal{L})|_{M_{0}} $
   in $H^{1,1}(M, \mathbb{R} )$, we have $$\lim_{k\rightarrow\infty}
   \Vol_{\bar{g}_{s_{k}}}(\bar{\pi}^{-1}(S) )=  \sum_{i}\lim_{k\rightarrow\infty} \frac{1}{\dim_{\mathbb{C}} E_{i}!}\int_{E_{i} }
  \bar{\omega}_{s_{k}}^{\dim_{\mathbb{C}} E_{i}}=0,  $$ which is a contradiction.

   Note that by i)  $(X\backslash S_{X}, d_{X})$ coincides with the length
   metric structure $(X\backslash S_{X}, d_{d_{X}})$. Consequently,
   $f: (X\backslash S_{X},d_X)\to (M_{0}\backslash
   S,d_{g})$ is an isometry, and this implies  iii).
    \end{proof}
\vskip2mm

    \begin{lemma}\label{l3.2}Let   $(X, d_{X})$ be as in Lemma \ref{l3.1}, and let $(M_{k}, g_{k}, \omega_{k})$ be any
     family of
    Ricci-flat K\"{a}hler  $n$-dimensional manifolds satisfying
 \begin{itemize}\label{00}
  \item[i)] $$ \lim_{k \rightarrow \infty}\Vol_{g_{k}}(M_{k})=\frac{1}{n!}
 c_{1}^{n}(\mathcal{L}_{0}), $$
   \item[ii)]  there is a family of embeddings $F_{k}: M_{0}\backslash S \rightarrow
M_{k}$ such that $$F_{k}^{*}g_{k} \rightarrow g, \ \ {\rm and} \ \
F_{k}^{*}\omega_{k} \rightarrow \omega, \ \ \ {\rm when} \ \ k
\rightarrow \infty,$$ in the $C^{\infty}$-sense on any compact
subset $K\subset\subset M_{0}\backslash S$.
   \end{itemize} Then  $$( M_{k}, g_{k}) \stackrel{d_{GH}}\longrightarrow (X,
d_{X}) \stackrel{d_{GH}}\longleftarrow ( \bar{M}, \bar{g}_{s_{k}}).
$$
 \end{lemma}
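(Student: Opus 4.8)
\textbf{Proof proposal for Lemma \ref{l3.2}.} The plan is to show that \emph{every} subsequential Gromov--Hausdorff limit of $(M_{k},g_{k})$ is isometric to the metric completion $\overline{(M_{0}\backslash S,d_{g})}$; since by Lemma \ref{l3.1}(iii) this completion is $(X,d_{X})$ and does not depend on the subsequence, it follows that the whole sequence $(M_{k},g_{k})$ converges to $(X,d_{X})$ (the convergence $(\bar{M},\bar{g}_{s_{k}})\stackrel{d_{GH}}\longrightarrow(X,d_{X})$ being, in the same way, already contained in Lemma \ref{l3.1}). First I would record the basic bounds for $(M_{k},g_{k})$: by hypothesis ${\rm Ric}_{g_{k}}\equiv 0$, and by i) the volumes $\Vol_{g_{k}}(M_{k})$ are bounded above and, for $k$ large, bounded below by some $V>0$.

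The first nontrivial point is a uniform diameter bound ${\rm diam}_{g_{k}}(M_{k})\le D$, which I would obtain exactly as in the proof of Theorem \ref{t1.1+}: fix $q\in M_{0}\backslash S$ and $\rho>0$ with $\overline{B_{g}(q,2\rho)}\subset\subset M_{0}\backslash S$; by ii) the convergence $F_{k}^{*}g_{k}\to g$ in $C^{\infty}$ on $\overline{B_{g}(q,2\rho)}$ forces, for $k$ large, $B_{g_{k}}(F_{k}(q),\tfrac{\rho}{2})\subset F_{k}(M_{0}\backslash S)$ and $\Vol_{g_{k}}(B_{g_{k}}(F_{k}(q),\tfrac{\rho}{2}))\ge v_{0}>0$; Bishop--Gromov then gives $\Vol_{g_{k}}(B_{g_{k}}(F_{k}(q),1))\ge c_{0}>0$; and applying Lemma \ref{t3.03} with $p=F_{k}(q)$ and $R=\tfrac{1}{2}{\rm diam}_{g_{k}}(M_{k})$ together with the upper volume bound from i) yields ${\rm diam}_{g_{k}}(M_{k})\le D$ for a constant $D$ independent of $k$. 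Having ${\rm Ric}_{g_{k}}\equiv 0$, ${\rm diam}_{g_{k}}(M_{k})\le D$ and $\Vol_{g_{k}}(M_{k})\ge V$, Bishop--Gromov also gives the non-collapsing bound $\Vol_{g_{k}}(B_{g_{k}}(p,r))\ge\kappa r^{2n}$ uniformly, so by Gromov's pre-compactness theorem every subsequence of $(M_{k},g_{k})$ has a further subsequence converging to a compact length metric space $(X',d_{X'})$, and by the structure results of \cite{CC1}, \cite{CT}, \cite{CCT} applied as in the proof of Lemma \ref{l3.1} there is a closed $S'\subset X'$ with $\dim_{\mathcal{H}}S'\le 2n-4$ such that $X'\backslash S'$ is a smooth complex manifold and $d_{X'}|_{X'\backslash S'}$ is the length distance of a smooth Ricci-flat K\"ahler metric $g'_{\infty}$.

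The core of the argument, and the step I expect to be the main obstacle, is identifying $(X',d_{X'})$ with $\overline{(M_{0}\backslash S,d_{g})}$. I would run the same scheme as in the proof of Lemma \ref{l3.1}, with $(\bar{M},\bar{g}_{s_{k}},\bar{\pi}^{-1}(S))$ replaced by $(M_{k},g_{k},E_{k})$, where $E_{k}=M_{k}\backslash F_{k}(M_{0}\backslash S)$ (in the applications a fixed family of vanishing cycles of Hausdorff dimension $\le 2n-2$), and the convergence $\bar{g}_{s_{k}}\to\bar{\pi}^{*}g$ on compact subsets replaced by $F_{k}^{*}g_{k}\to g$ from ii): Theorem \ref{t2.01} then produces a homeomorphic local isometry of $X'\backslash S_{X'}$ onto $(M_{0}\backslash S,g)$, where $S_{X'}$ is the set of Gromov--Hausdorff limits of points of $E_{k}$; the volume identity i) together with the Cheeger--Colding volume convergence theorem (\cite{C}, \cite{CC1}) and Theorem \ref{p2+.1} forces $S_{X'}=S'$; and since $d_{X'}|_{X'\backslash S'}$ is a length metric, this local isometry is a genuine isometry onto $(M_{0}\backslash S,d_{g})$, so $X'=\overline{X'\backslash S'}$ is isometric to $\overline{(M_{0}\backslash S,d_{g})}$, which by Lemma \ref{l3.1}(iii) is $(X,d_{X})$. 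The delicate point here, absent in Lemma \ref{l3.1}, is that the exceptional sets $E_{k}$ are no longer a single proper subvariety of a fixed manifold, so one must check that they satisfy the Hausdorff-dimension hypothesis of Theorem \ref{t2.01} and are uniformly controlled enough for the limiting argument; granting this, the conclusion $(M_{k},g_{k})\stackrel{d_{GH}}\longrightarrow(X,d_{X})$ follows because the limit is independent of the subsequence.
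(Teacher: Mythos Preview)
Your overall outline (uniform diameter bound via Lemma \ref{t3.03}, non-collapsing, Gromov pre-compactness, then identification of any subsequential limit with $\overline{(M_{0}\backslash S,d_{g})}$) matches the paper, and the diameter argument is essentially the same. The divergence---and the real gap---is in the identification step.

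You try to rerun the proof of Lemma \ref{l3.1} on $(M_{k},g_{k})$: apply Theorem \ref{t2.01} with exceptional sets $E_{k}=M_{k}\backslash F_{k}(M_{0}\backslash S)$, and then invoke Theorem \ref{p2+.1} to force $S_{X'}=S'$. Neither tool is available here. Theorem \ref{t2.01} is formulated for a \emph{fixed} compact manifold $M$ with a \emph{fixed} closed subset $E$ of Hausdorff dimension $\le n-2$ and metrics $g_{k}$ on $M$ converging on $M\backslash E$; in Lemma \ref{l3.2} the $M_{k}$ are a priori different manifolds and the $E_{k}$ vary with $k$, with no hypothesis whatsoever on their Hausdorff dimension. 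More seriously, Theorem \ref{p2+.1} requires the exceptional set to be a complex subvariety (so that volume-minimization gives the lower volume bound), and that its volume tends to $0$ because the K\"ahler class degenerates. In Lemma \ref{l3.1} this holds because $\bar{\pi}^{-1}(S)$ is a fixed subvariety of $\bar{M}$ and $[\bar{\omega}_{s}]|_{\bar{\pi}^{-1}(S)}\to 0$. For your $E_{k}$ neither is true: $E_{k}$ is just the complement of a smooth embedding and carries no complex-analytic structure, so the step ``$S_{X'}=S'$'' cannot be carried out. You flag this yourself, but ``granting this'' is precisely what cannot be granted at the stated level of generality.

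The paper avoids this obstacle entirely by \emph{not} reproving Lemma \ref{l3.1} for $(M_{k},g_{k})$. Instead it uses the structure of $(X,d_{X})$ already obtained from the resolution side: take the homeomorphic local isometry $f:(X\backslash S_{X},d_{X})\to (M_{0}\backslash S,g)$ from Lemma \ref{l3.1}(ii); build (as in \cite{RZ}, Lemma 4.1, using only the $C^{\infty}$-convergence $F_{k}^{*}g_{k}\to g$) a local isometric embedding $\psi':(M_{0}\backslash S,g)\to(Y,d_{Y})$ into any subsequential limit $Y$; compose to get $\psi=\psi'\circ f:X\backslash S_{X}\to Y$; and then extend $\psi$ to a map $\tilde{\psi}:X\to Y$. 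Surjectivity of $\tilde{\psi}$ comes from Cheeger--Colding volume convergence together with hypothesis i), and the crucial isometry step uses only that $\dim_{\mathcal{H}}S_{X}\le 2n-2$ (hence $\mathcal{H}^{2n-1}(\tilde{\psi}(S_{X}))=0$) so that, by \cite{CC2}, curves in $Y$ can be perturbed off $Y\backslash\psi(X\backslash S_{X})$. The point is that all codimension control is borrowed from $S_{X}$, which was handled once and for all via the crepant resolution; no control on $E_{k}$ or on the singular set of $Y$ is ever needed.
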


\vskip2mm
 \begin{proof}    By  Lemma \ref{l3.1},  there is a homeomorphic
  local isometry $f: (X\backslash S_{X},d_X)\to (M_{0}\backslash
   S,g)$.
  For an  $x\in X\backslash S_{X} $,   ii) of Lemma \ref{l3.2}
  implies that $ {\rm Vol}_{g_{k}}(B_{g_{k}}(F_{k}(f(x)), 1))\geq \upsilon
  $ for a constant $\upsilon >0 $ independent of $k$. For any
  $1<R< {\rm
diam}_{g_{k}}(M_{k})$,  Lemma \ref{t3.03}  (Lemma 2.3 in \cite{Pa})
shows that
$$R\leq 1+ 2n \frac{{\rm Vol}_{g_{k}}(B_{g_{k}}(F_{k}(f(x)),
2R+2))}{{\rm Vol}_{g_{k}}(B_{g_{k}}(F_{k}(f(x)), 1))}.$$ By taking
$R=\frac{1}{2} {\rm diam}_{g_{k}}(M_{k})$, we obtain that
$${\rm diam}_{g_{k}}(M_{k})<2+ 4n\upsilon^{-1}\frac{1}{n!}
 c_{1}^{n}(\mathcal{L}_{0}).$$   By the
 Bishop-Gromov relative volume   comparison, $(M_{k},g_{k})$ is
non-collapsed, i.e.,  there is a constant $\kappa >0$ independent of
$k$  such
  that  for any metric
  ball $B_{g_{k}}(p,  r) \subset M_{k} $  \begin{equation}\label{e034.1}
  {\rm Vol}_{g_{k}}(B_{g_{k}}(p,  r))\geq \kappa  r^{2n}.   \end{equation}
  Gromov's  pre-compactness   theorem implies that  a
  subsequence of    $\{(M_{k}, g_{k})\}$ $d_{GH}$-converges to a compact length  metric
  space $(Y, d_{Y})$.  Following   the proof of Theorem 4.1 in  \cite{RZ} with a
  minor modification
   will  prove  that $(Y, d_{Y})$ is isometric to $(X, d_{X})$.
    Because of this,    we only  present a  sketch of the proof.

First, the   same arguments  as the proof of Lemma 4.1 in \cite{RZ}
imply   that there exists an
  embedding $\psi': (M_{0}\backslash S, g)\rightarrow (Y, d_{Y})$ which is a local
  isometry. Hence $\psi=\psi'\circ f: (X\backslash S_{X}, d_{X})\rightarrow  (Y,
  d_{Y})$ is a local isometric embedding.
  Thus, if $\gamma$ is a geodesic in $ ( X\backslash S_{X},d_{X})$,
  then $\psi(\gamma)$ is a geodesic in $ ( \psi(X\backslash
  S_{X}),d_{Y})$.
  For any  $x_{1}, x_{2}\in X$, and  two
      sequences  $\{x_{1,j}\}, \{x_{2,j}\}\subset X\backslash S_{X}$   converging to $x_{1}$ and $ x_{2}$ respectively,
   there are curves    $\gamma_{j}$  connecting
      $x_{1,j}$ and  $x_{2,j}$ in $X\backslash S_{X}$ with ${\rm length}_{
      g}(\gamma_{j})\leq
      d_{X}(x_{1,j}, x_{2,j})+\frac{1}{j}$ by Lemma \ref{l3.1},  which implies
         \begin{equation}\label{e034.12}d_{Y}(\psi(x_{1,j}), \psi(x_{2,j}))\leq
      {\rm length}_{d_{Y}}(\psi(\gamma_{j}))= {\rm length}_{
      g}(\gamma_{j})\leq
      d_{X}(x_{1,j}, x_{2,j})+\frac{1}{j}.  \end{equation}   If $x_{1}=x_{2}=x$,   both
      $\{\psi(x_{j})\}$ and $\{\psi(x'_{j})\}$ are Cauchy sequences, and converge to  the same  limit
       $y$ in $Y$.  By defining  $\tilde{\psi}(x)=y$, $\psi$ extends to a  continuous
    map
      $\tilde{\psi}: X \longrightarrow Y$ such that   $\tilde{\psi}(X)\subseteq  Y$ is closed.

   If $\tilde{\psi}(X)\subsetneq Y$, then  there is  a metric ball   $B_{d_{Y}}(y, \delta)\subset\subset Y\backslash
    \tilde{\psi}(X)$ for a $\delta >0$.  By (\ref{e04.1}), (\ref{e034.1}), and  the volume
    convergence theorem due  to  Cheeger and Colding  (cf. \cite{C},
    \cite{CC1}), we derive
  $$
   \mathcal{H}^{2n}(Y)=\mathcal{H}^{2n}(X)={\rm Vol}_{g}(X\backslash S_{X})
   \quad \mbox{ and } \quad \mathcal{H}^{2n}(B_{d_{Y}}(y, \delta))\geq \kappa \delta^{2n} , $$ where
 $\mathcal{H}^{2n} $ denotes    the $2n$-dimensional Hausdorff measure. Thus  $$\mathcal{H}^{2n}(Y)\geq
   \mathcal{H}^{2n}(\psi(X\backslash S_{X}))+\mathcal{H}^{2n}(B_{d_{Y}}(y,
   \delta))\geq {\rm Vol}_{g}(X\backslash S_{X})+ \kappa \delta^{2n}>\mathcal{H}^{2n}(Y),$$
   a contradiction.

   To show that $\tilde{\psi}$ is an isometry, we first check that
   $\tilde{\psi}$ is 1-Lipschitz.
For any $x_{1}\neq  x_{2}\in X$, there are
  sequences of points $\{x_{i,j}\}\subset X\backslash S_{X}$, $i=1,2$,  such that $d_{X}(x_{i,j}, x_{i})\rightarrow 0$ when $j\rightarrow\infty$.
  Thus $d_{Y}(\psi(x_{i,j}), \tilde{\psi}( x_{j}))\rightarrow 0$, $i=1,2$,  when
  $j\rightarrow\infty$.  By (\ref{e034.12}) and letting $j\rightarrow\infty$,
we obtain that \begin{equation}\label{3.8}
d_{Y}(\tilde{\psi}(x_{1}),\tilde{\psi}(x_{2}))  \leq
  d_{X}(x_{1}, x_{2}) \end{equation} i.e.,  $\tilde{\psi}$ is a 1-Lipschitz  map.

   Because   $\tilde{\psi}(S_{X})\bigcup \psi(X\backslash S_{X})=Y$,
    $\tilde{\psi}(S_{X})\supseteq Y\backslash \psi(X\backslash S_{X})$.
    Since
  $\dim_{\mathcal{H}}S_{X} \leq 2n-2$,
 $$\mathcal{H}^{2n-1}(Y\backslash \psi(X\backslash S_{X}))\leq \mathcal{H}^{2n-1}( \tilde{\psi}(S_{X}) )\leq
\mathcal{H}^{2n-1}(S_{X})=0.
 $$   If there is a $j$ such that  $\varrho=d_{X}(x_{1,j},
x_{2,j})-d_{Y}(\psi(x_{1,j}), \psi(x_{2,j}))>0$,
 then by  Section 3 of  \cite{CC2} we see
   that
there
  is a curve $\bar{\gamma}$ connecting $\psi(x_{1,j}), \psi(x_{2,j})$ in
    $ \psi(X\backslash S_{X})$ and  $$
   d_{X}(x_{1,j}, x_{2,j})\leq   {\rm
    length}_{d_{X}}(\psi^{-1}(\bar{\gamma}))=
    {\rm length}_{d_{Y}}(\bar{\gamma})\leq d_{Y}(\psi(x_{1,j}), \psi(x_{2,j}))+
    \frac{1}{2}\varrho,$$  a contradiction. Then $d_{X}(x_{1,j},
x_{2,j})=d_{Y}(\psi(x_{1,j}), \psi(x_{2,j}))$ and thus   by letting
  $j\rightarrow\infty$, we obtain that  $$d_{Y}(\tilde{\psi}(x_{1}),\tilde{\psi}(x_{2}))
  =
  d_{X}(x_{1}, x_{2}).$$ By now, we have proved that   $\tilde{\psi}:(X,d_{X})\longrightarrow (Y,d_{Y})$  is an
  isometry.
 \end{proof}

\vskip4mm

After the above preparation, we are ready to prove Theorem
\ref{t1.1}, Theorem \ref{t1.01+} and Corollary \ref{c1.2}. \vskip2mm

 \begin{proof}[Proof of Theorem \ref{t1.1}]
Let $M_{0}$, $\pi: \mathcal{M}\rightarrow \Delta$,
$\mathcal{K}_{\mathcal{M}/\Delta}$, $\mathcal{L}$, $M_{t}$,
$\tilde{g}_{t}$, $\tilde{\omega}_{t}$, $(\bar{M}, \bar{\pi})$,
$\bar{g}_{s}$, $\bar{\omega}_{s}$ be as in Theorem \ref{t1.1}.

 Note that  $$\lim_{s\rightarrow 0}\Vol_{\bar{g}_{s}}(\bar{M})=\frac{1}{n!}
 c_{1}^{n}(\mathcal{L})|_{M_{0}} \equiv \frac{1}{n!}
c_{1}^{n}(\mathcal{L})|_{M_{t}}= \Vol_{\tilde{g}_{t}}(M_{t}) .
$$ By   \cite{To},
$$   \bar{g}_{s} \longrightarrow  \bar{\pi}^{*} g, \
 \  \ {\rm
and} \ \ \bar{\omega}_{s} \longrightarrow   \bar{\pi}^{*} \omega, \
\ \ {\rm when} \ \
 s\rightarrow 0,$$ in the $C^{\infty}$-sense  on any compact subset $K\subset\subset
 \bar{M}\backslash \bar{\pi}^{-1}(S)$.  By (\ref{e04.2000+}) and Ricci flatness,   we apply Gromov's compactness theorem
 to conclude  that
 for any sequence $s_{k}\rightarrow 0$, a subsequence of
 $(\bar{M},\bar{g}_{s_{k}})$ $d_{GH}$-converges to a compact path metric space $(X, d_{X})$,
 which satisfies the conclusion of Lemma
\ref{l3.1}.   By  Lemma \ref{l3.2},   for any other sequence
$s'_{k}\rightarrow 0$,
 $(\bar{M},\bar{g}_{s'_{k}})$ $d_{GH}$-converges to  $(X, d_{X})$ too.
  Thus $$\lim_{s\rightarrow 0}d_{GH}
((\bar{M},\bar{g}_{s}), (X, d_{X}))=0. $$

By  Theorem \ref{t4.2+},  $$F_{t}^{*}\tilde{g}_{t} \rightarrow g, \
\ {\rm and} \ \ F_{t}^{*}\tilde{\omega}_{t} \rightarrow \omega, \ \
\ {\rm when} \ \ t\rightarrow 0,$$ in the $C^{\infty}$-sense on any
compact subset $K\subset\subset M_{0}\backslash S$, where $F_{t}:
M_{0}\backslash S \rightarrow M_{t}$ is a family of embeddings.
 By   Lemma \ref{l3.2} and the fact that the limit is independent of convergent
  subsequences,   we obtain   the
 conclusion,   $$\lim_{t\rightarrow 0}d_{GH}
((M_{t},\tilde{g}_{t}), (X, d_{X}))=0. $$
\end{proof}

\vskip2mm

 The same argument in the proof of Theorem  \ref{t1.1} also
gives a proof of Theorem \ref{t1.01+}.

\vskip2mm
 \begin{proof}[Proof of Corollary  \ref{c1.2}] Let   $M$ be  a Calabi-Yau
 manifold, and  let $\omega_{s}$ ($s\in [0,1]$)  be a family of Ricci-flat
 K\"{a}hler forms.  It is clear that $\omega_{s}$ converges to
 $\omega_{0}$ when  $s\rightarrow 0$ in the $C^{\infty}$-sense, which implies  that $\mathfrak{M}_{M}$ is path
 connected in $(\mathfrak{X},d_{GH})$. Let  $\pi': \bar{\mathcal{M}}\rightarrow \Delta$ be  a
 smooth family of Calabi-Yau
 manifolds over the unit  disc $\Delta\subset \mathbb{C} $ with
 an ample line bundle  $ \mathcal{L}$ on $\bar{\mathcal{M}}  $, and let  $ \tilde{\omega}_{t}$ be the
 unique Ricci-flat
 K\"{a}hler form on $\pi'^{-1}(t)=M_{t} $ with $ \tilde{\omega}_{t}\in  c_{1}(
 \mathcal{L})|_{M_{t}}$. It is standard   that $
F_{t}^{*} \tilde{\omega}_{t}$ converges to $ \tilde{\omega}_{0}$ in
the $C^{\infty}$-sense,  when $t\rightarrow 0
 $, where $F_{t}:M_{0}\rightarrow M_{t}$ is a smooth family of
 diffeomorphisms.  Thus, if $\pi': \bar{\mathcal{M}}\rightarrow \mathcal{D}$ is   a
 smooth family of Calabi-Yau
 manifolds over connected  complex manifold  $\mathcal{D}$,  then
  $$\bigcup_{M_{t}=\pi'^{-1}(t),  t\in \mathcal{D}} \mathfrak{M}_{M_{t}}
 \subset (\mathfrak{X},d_{GH}) $$  is path connected.

 Note  that
 Calabi-Yau manifolds are minimal models.
If $M$ and $M'$ are  two birationally equivalent three-dimensional Calabi-Yau
manifolds, then $M$ and $M'$ are related by a sequence of flops
(cf.  \cite{Kol}  \cite{KM}), i.e., there is  a sequence of
varieties $ M_{1}, \cdots, M_{k}$ such that $M=M_{1}$, $M'=M_{k}$,
and $M_{j+1}$ is obtained by a flop from $M_{j}$. Consequently there
are normal  projective varieties $ M_{0,1}, \cdots, M_{0,k-1}$, and
small resolutions $\bar{\pi}_{j}:M_{j}\rightarrow M_{0,j}$ and
$\bar{\pi}_{j}^{+}:M_{j}\rightarrow M_{0,j-1}$.  By  \cite{Kol},
$M_{j}$ has  the same singularities as $M$, and thus $M_{j}$ is
smooth.  Since the exceptional locus of $\bar{\pi}_{j}$ and
$\bar{\pi}_{j}^{+}$ are of co-dimension at least   2,  $M_{0,j}$ has
only canonical singularities, and  the canonical bundle  of
$M_{0,j}$ is  trivial (cf. Corollary 1.5 in  \cite{Kaw}).  Therefore
$M_{0,j}$ is a three-dimensional Calabi-Yau variety, and $M_{j}$ is a
three-dimensional Calabi-Yau
manifold. By  Theorem \ref{t1.01+}, for any $j>0$,
$$\overline{\mathfrak{M}}_{M_{j}}\bigcup
\overline{\mathfrak{M}}_{M_{j+1}}$$ is path connected,  where
$\overline{\mathfrak{M}}_{M_{j}}$ denotes the closure of
$\mathfrak{M}_{M_{j}}\subset (\mathfrak{X}, d_{GH})$.  By now we
have proved  i) of
 Corollary \ref{c1.2}.

Let  $M_{0}$ be a three-dimensional complete  intersection  Calabi-Yau
conifold    in $\mathbb{C}P^{m_{1}}\times\cdots\times
 \mathbb{C}P^{m_{l}}$, and $\bar{M}$ be a small resolution of  $M_{0}$,
 which is a three-dimensional complete  intersection  Calabi-Yau (CICY)
manifold   in  products  of  projective
 spaces. By  Theorem \ref{t1.1}, we see
  that  $$\bigcup_{\tilde{M}\in \mathfrak{D}(M_{0})}\overline{\mathfrak{M}}_{\tilde{M}}\bigcup
\overline{\mathfrak{M}}_{\bar{M}}\subset (\mathfrak{X}, d_{GH})$$ is
path connected, where
 $\mathfrak{D}(M_{0}) $ denotes the set of three-dimensional  CICY
manifolds   in $\mathbb{C}P^{m_{1}}\times\cdots\times
 \mathbb{C}P^{m_{l}}$ obtained by a  smoothing  of $M_{0}$. If $M$ and $M'$ are  two three-dimensional
 CICY
manifolds   in  products  of  projective
 spaces, then  by   \cite{GH}  $M$ and $M'$ are related by a sequence of conifold
 transitions, or  inverse conifold transitions.  Precisely,  there is  a sequence
 of three-dimensional CICY
manifolds $ M_{1}, \cdots, M_{k}$ with $M=M_{1}$ and $M'=M_{k}$
such that  for any $1\leq j \leq k$, there is  a three-dimensional CICY
conifold  $M_{0,j}$  in some $\mathbb{C}P^{m_{1}}\times\cdots\times
 \mathbb{C}P^{m_{l}}$, $M_{j}$ is a small resolution of $M_{0,j}$
 and $M_{j+1}\in \mathfrak{D}(M_{0,j})$, or vice  versa   $M_{j+1}$ is a small resolution of $M_{0,j}$
 and $M_{j}\in \mathfrak{D}(M_{0,j})$.  Thus  ii) of
 Corollary \ref{c1.2} is followed i.e., $\overline{\mathfrak{CP}}$
 is path connected.

 In  \cite{ACJM} and  \cite{CGGK},    many  complete
intersection  Calabi-Yau
   3-manifolds in toric varieties  were verified  to be
   connected
   by extremal transitions, which include
   Calabi-Yau hypersurfaces in  all  toric 4-manifolds obtained by resolving
    weighted projective 4-spaces. Let $\mathfrak{CT}_{0}$ be the set
    of the above Calabi-Yau 3-manifolds with Ricci-flat K\"ahler metrics of volume
    1. By Theorem \ref{t1.1}, the closure
    $\overline{\mathfrak{CT}}_{0}$ is path connected. Note that   a quintic in $\mathbb{CP}^{4}$ with
    Ricci-flat K\"ahler metric of volume
    1  is in  $\mathfrak{CT}_{0}\bigcap \mathfrak{CP} $.
   Thus  iii) of
 Corollary \ref{c1.2} is obtained.
 \end{proof}

 \vskip2mm
Now we give a proof of Theorem  \ref{p2+.1}, which can be viewed as
a combination of the proof of   Theorem 2.0.1   in \cite{Go} and the
proof of Theorem 9.3 in \cite{Mo}.

\vskip2mm
  \begin{proof}[ Proof of Theorem \ref{p2+.1}] For any $r\leq \min \{i_{g}(p), \frac{\pi}{2\sqrt{\Lambda}}\}$,  there are normal
  coordinates such that $g=d\rho^{2}+g_{\rho}$  on $B_{g}(p,r)$ where $g_{\rho}$ is a Riemannian metric on $ \partial
  B_{g}(p,\rho)$, $0< \rho\leq r$. Let  $f(q, \rho)=\exp_{p} \frac{\rho}{r}(\exp^{-1}_{p}
  q)$ for any $q\in \partial
  B_{g}(p,r)$.   Then $f:\partial
  B_{g}(p,r)\times (0,r] \rightarrow B_{g}(p,r)\backslash \{p\}$ is
  a diffeomorphism.  For any $w\in T_{q}(\partial
  B_{g}(p,r))$, it is clear that   $J(\rho)=df|_{(\rho, q)}w$ is a  normal  Jacobi field along the geodesic
    $\gamma(\rho)=f(q, \rho)$ with $J(0)=0
  $ and $J(r)=w$.
 A standard Rauch comparison  argument  shows that
 $$|J(\rho)|_{g_{\rho}}\leq \frac{\sin \sqrt{\Lambda}\rho}{\sin \sqrt{\Lambda}
 r}|J(r)|_{g_{r}} =
   \frac{\sin \sqrt{\Lambda}\rho}{\sin \sqrt{\Lambda} r}
   |w|_{g_{r}}, \ \ $$ (cf. Lemma 2.0.1  \cite{Go}). Thus the norm  of  the
   differential  $df|_{\partial
  B_{g}(p,r)\times\{\rho\}}$ corresponding the  metric
   $g_{r}$ on $\partial
  B_{g}(p,r)\times\{\rho\}$ and $g_{\rho}$ on $\partial
  B_{g}(p,\rho)$ satisfies
  $$|df|_{\partial
  B_{g}(p,r)\times\{\rho\}}|\leq  \frac{\sin \sqrt{\Lambda}\rho}{\sin \sqrt{\Lambda}
  r},$$
    which  implies
    \begin{equation}\label{e++1}   g_{\rho}\leq \frac{\sin^{2} \sqrt{\Lambda}\rho}{\sin^{2} \sqrt{\Lambda}
  r}g_{r}.  \end{equation}

    Denote  $$\Theta(r)={\Vol}_{g}(B_{g}(p,r)\cap
  E).$$ Since $\Theta(r)$ is monotonically increasing, $\Theta'(r)$ exists for
  almost all $r$.   By 4.11(3) in \cite{Mo},  we have
   $$\mathcal{H}_{g_{r}}^{2m-1}(\partial B_{g}(p,r)\cap
  E)\leq \Theta'(r). $$ Let  $\mathcal{C}=(\partial
  B_{g}(p,r)\cap E)\times  (0,r]\subset \partial
  B_{g}(p,r)\times  (0,r]=B_{g}(p,r)\backslash \{p\}$. By  Fubini's
  theorem and (\ref{e++1}), we see  that \begin{eqnarray*}
  \mathcal{H}_{g}^{2m}(\mathcal{C})& = &
  \int_{0}^{r}\mathcal{H}_{g_{\rho}}^{2m-1}(\partial B_{g}(p,r)\cap
  E)d\rho \\ & \leq & \mathcal{H}_{g_{r}}^{2m-1}(\partial B_{g}(p,r)\cap
  E)\int_{0}^{r}\frac{\sin^{2m-1} \sqrt{\Lambda}\rho}{\sin^{2m-1} \sqrt{\Lambda}
  r}d\rho . \end{eqnarray*} Since $E$ is a complex subvariety, $E$ is a volume minimizer and thus
    $$\Theta(r)\leq
  \mathcal{H}_{g}^{2m}(\mathcal{C}) \leq \frac{\int_{0}^{r} \sin^{2m-1} \sqrt{\Lambda}\rho d\rho }{\sin^{2m-1} \sqrt{\Lambda}
  r} \Theta'(r).$$ Therefore  $$  \frac{d}{dr}\left(\frac{\Theta(r)}
  {\int_{0}^{r} \sin^{2m-1} \sqrt{\Lambda}\rho
  d\rho}\right)\geq 0.$$ Since $p$ is a smooth point of $E$, $$\lim_{\bar{r}\rightarrow0}
  \frac{\Theta(\bar{r})}{\int_{0}^{\bar{r}} \sin^{2m-1} \sqrt{\Lambda}\rho
  d\rho}=C,  $$ where $C$
  is a constant depending only on $\Lambda $ and $m$. Thus
   $$\Theta(r)\geq C\int_{0}^{r} \sin^{2m-1} \sqrt{\Lambda}\rho
  d\rho \geq \varpi r^{2m} .$$
    \end{proof}
\vskip5mm

\vspace{0.7cm}

\appendix
\section{Gromov-Hausdorff Convergence of Compact Metric Spaces}

\vskip2mm

In the proof of Theorem \ref{t2.01},  we freely used some basic
properties of the Gromov-Hausdorff convergence of compact metric
spaces. For the convenience of readers, we will briefly recall
related notions and proofs of these properties (cf. \cite{Rong}).

Let $(Z,d)$ be a metric space, and let $C^Z$ denote the set of all
compact subsets of $Z$. For $A, B\in C^Z$, the  Hausdorff distance
of $A$ and $B$  is
$$d_H(A,B)=\inf\{\epsilon, U_\epsilon(A)\supseteq B, U_\epsilon(B)\supseteq A\},$$
where $U_\epsilon(S)$ denotes the $\epsilon$-neighborhood of $S$.
Then $(C^Z,d_H)$ is a complete metric space. The Gromov-Hausdorff
distance can be viewed as an abstract extension of $d_H$ on $
\mathfrak{X}$: the space of isometric classes of all compact metric
spaces. For $X, Y\in \mathfrak{X}$, the Gromov-Hausdorff distance of
$X$ and $Y$ is
$$d_{GH}(X,Y)=\inf_Z\{d_H^Z(X,Y), \exists \text{ isometric embeddings,
 $X, Y\hookrightarrow Z$, a metric space}\}.$$ In the above
 definition, one can consider the disjoint union  that  $Z=X\coprod
 Y$ with an admissible metric $d$, i.e., a metric on $Z$ such that
 the restriction on $X$ (resp. $Y$) is the metric on $X$ (resp.
 $Y$).

It is not hard to check that $d_{GH}(X,Y)=0$  if and only if $X$ is
isometric to $Y$ and $d_{GH}$ satisfies the triangle inequality.
Hence, $(  \mathfrak{X} ,d_{GH})$ is a metric space.

In the proof of Theorem \ref{t2.01}, the following proposition is
used.

\vskip2mm

\begin{proposition}\label{A1}

Given $\{X_i\}$ in $  \mathfrak{X} $ such that $d_{GH}(X_i,X_{i+k})
<2^{-i}$ for all $i$ and $k$, let $Y=\coprod\limits_{i} X_i$.

\noindent i) There is a  metric $d_Y$ on $Y$ such that the
restriction of $d_Y$ on each $X_i$ is the metric on $X_i$ and
$\{X_i\}$ is a Cauchy sequence with respect to $d_{Y,H}$.

\noindent ii) Let $X$ be the collection of equivalent Cauchy
sequences, $\{\{x_i\}, \,\, x_i\in X_i\}$, equipped with the metric
$\hat d(\{x_i\}, \{y_i\})= \lim\limits_{i\rightarrow \infty}
d_Y(x_i,y_i)$. Then $Y\coprod X$ has an admissible metric defined by
$d(x,\{x_i\})= \lim\limits_{i\rightarrow \infty} d(x,x_i)$.

\noindent iii) For all $\epsilon>0$, $X$ has a finite
$\epsilon$-dense subset (thus the completion of $X$ is compact).

\noindent iv) $d_H(X_i,X)\to 0$ as $i\to \infty$.
\end{proposition}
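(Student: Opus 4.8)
The plan is to run the classical construction by which a Cauchy-type sequence in $(\mathfrak{X},d_{GH})$ produces its Gromov--Hausdorff limit, i.e.\ a "direct limit" of compact metric spaces along a family of admissible metrics on consecutive pairs.

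For i), I would first invoke $d_{GH}(X_i,X_{i+1})<2^{-i}$ to pick, for each $i$, an admissible metric $\rho_i$ on $X_i\sqcup X_{i+1}$ with $d^{\rho_i}_H(X_i,X_{i+1})<2^{-i}$; by admissibility $\rho_i$ restricts to the given metric on $X_i$ and on $X_{i+1}$ (so $\rho_i$ and $\rho_{i+1}$ agree on $X_{i+1}$). Then I define $d_Y$ on $Y=\coprod_i X_i$ as the associated chain (length) pseudometric: $d_Y(x,y)$ is the infimum of $\sum_{k=1}^m\rho(p_{k-1},p_k)$ over finite chains $x=p_0,\dots,p_m=y$ in $Y$ in which each consecutive pair lies in a common $X_i$ (use $d_{X_i}$) or a common $X_i\sqcup X_{i+1}$ (use $\rho_i$). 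Symmetry and the triangle inequality are immediate from concatenation. The substantive check is that $d_Y$ restricts to the original metric on each $X_i$: "$\le$" is the one-edge chain, and "$\ge$" follows by induction on the highest level a chain reaches, collapsing each maximal excursion of the chain above level $i$ back down one level at a time using the triangle inequality of the relevant $\rho_j$ (this uses crucially that adjacent $\rho_j$'s agree on the shared $X_{j}$). Since the one-edge chain gives $d_Y\le\rho_i$ on $X_i\sqcup X_{i+1}$, we get $d_{Y,H}(X_i,X_{i+1})\le d^{\rho_i}_H(X_i,X_{i+1})<2^{-i}$, hence by the triangle inequality for Hausdorff distance $d_{Y,H}(X_i,X_j)<\sum_{k\ge i}2^{-k}=2^{-i+1}$ for $j>i$, so $\{X_i\}$ is $d_{Y,H}$-Cauchy. (Note $d_Y$ may only be a pseudometric, as points in distinct $X_i$'s can be at $d_Y$-distance $0$; this is harmless, since only Hausdorff distances enter below, and one may pass to the metric quotient if a genuine metric is wanted.)

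For ii), let $X$ be the set of equivalence classes $[\{x_i\}]$ of $d_Y$-Cauchy sequences with $x_i\in X_i$, where $\{x_i\}\sim\{x_i'\}$ iff $d_Y(x_i,x_i')\to0$. All the defining limits exist by Cauchy estimates: $|d_Y(x_i,y_i)-d_Y(x_j,y_j)|\le d_Y(x_i,x_j)+d_Y(y_i,y_j)$ makes $\hat d([\{x_i\}],[\{y_i\}])=\lim_i d_Y(x_i,y_i)$ well defined and a genuine metric on $X$; and $|d_Y(x,x_i)-d_Y(x,x_j)|\le d_Y(x_i,x_j)$ makes $d(x,\{x_i\})=\lim_i d_Y(x,x_i)$ well defined for $x\in Y$, independent of representative because $|d_Y(x,x_i)-d_Y(x,x_i')|\le d_Y(x_i,x_i')\to0$. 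Setting $d$ to equal $d_Y$ on $Y$, $\hat d$ on $X$, and $d(x,\{x_i\})$ across, the triangle inequality for each of the four configurations of three points (all in $Y$; all in $X$; two in $Y$ and one in $X$; one in $Y$ and two in $X$) is obtained by passing to the limit in the corresponding inequality for $d_Y$; this is the claimed admissible metric on $Y\sqcup X$.

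For iv) I would prove the uniform bound $d_H(X_i,X)\le 2^{-i+1}$ in $Y\sqcup X$. Given $\xi=[\{x_j\}]\in X$ and $\delta>0$, pick $j\ge i$ with $d_Y(x_j,x_k)<\delta$ for all $k\ge j$; since $d_{Y,H}(X_i,X_j)<2^{-i+1}$ there is $p\in X_i$ with $d_Y(p,x_j)<2^{-i+1}$, so $d_Y(p,x_k)<2^{-i+1}+\delta$ for $k\ge j$, whence $d(p,\xi)=\lim_k d_Y(p,x_k)\le 2^{-i+1}+\delta$; letting $\delta\to0$ gives $d(\xi,X_i)\le 2^{-i+1}$. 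Conversely, for $p\in X_i$ build a sequence with $x_i=p$ and, choosing inductively upward and downward using $d_{Y,H}(X_j,X_{j+1})<2^{-j}$, points $x_j\in X_j$ with $d_Y(x_j,x_{j+1})<2^{-j}$; this is Cauchy, defines $\xi\in X$, and $d(p,\xi)=\lim_j d_Y(x_i,x_j)\le\sum_{l\ge i}2^{-l}=2^{-i+1}$. Hence $d_H(X_i,X)\le 2^{-i+1}\to0$. Part iii) then follows: given $\epsilon>0$ take $i$ with $2^{-i+1}<\epsilon/3$, a finite $\tfrac\epsilon3$-net of the compact space $X_i$, replace each net point by a point of $X$ within $2^{-i+1}$ of it, and check via the triangle inequality that this finite subset of $X$ is $\epsilon$-dense; total boundedness gives a compact completion (and a standard diagonal argument shows $X$ is itself complete). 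The main point requiring care is really just the "collapsing excursions" verification in i) that $d_Y$ restricts correctly to each $X_i$, together with tracking the Hausdorff-distance bounds consistently so that the \emph{uniform} estimate $d_H(X_i,X)\le 2^{-i+1}$ emerges in iv); no deeper obstacle arises, as this is the standard completeness-type statement for $(\mathfrak{X},d_{GH})$.
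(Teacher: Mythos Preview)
Your proof is correct and follows essentially the same classical construction as the paper: glue admissible metrics on consecutive pairs $X_i\sqcup X_{i+1}$ into a chain metric on $Y$, then complete. The only notable organizational difference is that you derive iii) from iv) (project a finite net of $X_i$ into $X$ using the uniform bound $d_H(X_i,X)\le 2^{-i+1}$), whereas the paper proves iii) directly by lifting an $\tfrac{\epsilon}{5}$-net of some $X_i$ to explicit Cauchy sequences and then uses iii) inside the proof of iv); your ordering is a bit cleaner, and you are also more explicit than the paper about why $d_Y$ restricts to the given metric on each $X_i$ (the ``collapsing excursions'' check), which the paper leaves implicit.
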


\vskip2mm

\begin{proof} i) We first take, for each $i$, an admissible
metric $d_{i,i+1}$ on $X_i\coprod X_{i+1}$ such that $d_{i,i+1}
(X_i,X_{i+1})<d_{GH}(X_i,X_{i+1})+2^{-i}<2^{-i+1}$. We then extend
$\{d_{i,i+1}\}$ to an admissible metric on $Y$ by defining, for each
pair $(i,j)$, an admissible metric $d_{i,i+j}$ on $X_i\coprod
X_{i+j}$, as follows:
$$d_Y(x_i,x_{i+j})=\inf _{x_{i+k}\in X_{i+k}}\{\sum_{k=0}^{j-1}
d_{i+k,i+k+1}(x_{i+k},x_{i+k+1})\}.$$ It is straightforward to check
that $d_Y$ satisfies the triangle inequality. Then $\{X_i\}$ is a
Cauchy sequence with respect to $d_{Y,H}$, because for all $j$,
\begin{eqnarray*} d_{Y,H}(X_i,X_{i+j})&\le & d_{Y,H}(X_i,X_{i+1})+\cdots +d_{Y,H}
(X_{i+j-1},X_{i+j})\\&\le & 2^{-i+1}+2^{-i}+\cdots +2^{-i-j+2}\\
& \le & 2^{-i+2}.\end{eqnarray*} Note that $(Y,d_Y)$ may not be
complete, and if not, the unique limit point is the desired limit
space $X$.

ii) Consider a subset of Cauchy sequences in $Y$,
$$\hat X=\{\{x_i\}: x_i\in X_i \text{ is a Cauchy sequence in
$Y$}\},$$ and define a pseudo-metric on $\hat X$,
$$\hat d(\{x_i\},\{y_i\})=\lim_{i\to \infty} d_Y(x_i,y_i),$$
where the existence of the limit is from
$$|d_Y(x_i,y_i)-d_Y(x_j,y_j)|\le d_Y(x_i,x_j)+d_Y(y_i,y_j)\to 0
\qquad \text{ as } i, j \to \infty.$$ Then $\hat d$ yields a metric
on the quotient space $X=\hat X/\sim$, where
$$\{x_i\}\sim \{y_i\} \qquad \text{ iff } \qquad \hat d(\{x_i\},
\{y_i\})=0.$$ We now define an admissible metric on $X\coprod Y$ by
declaring
$$d(\{x_i\},y)=\lim_{i\to \infty}d_Y(x_i,y).$$
(Because  $|d_Y(x_i,y)-d_Y(x_j,y)|\le d_Y(x_i,x_j)$, $d_Y(x_i,y)$ is
a Canchy sequence.) Since $d(\{x_i\},y)\ge d_Y(x_k,y)$ for some
$x_{k}\in \{x_i\}$, $d$ is indeed a metric (because
$d_{Y}(x_{k+j},y)>d_{Y}(x_k,y)>0$).

iii) Given $\epsilon>0$, we will construct a finite $\epsilon$-dense
subset of $X$ as follows: choose $i$ so that $2^{-i}<\frac\epsilon
5$. Because $X_i$ is compact, we may assume a finite $\frac \epsilon
5$-net, $\{x_i^1,...,x_i^\ell\}$, of $X_i$. Let
$x_{i+1}^1,...,x_{i+1}^\ell\in X_{i+1}$ such that
$d(x_i^j,x_{i+1}^j)<2^{-i}$. Let $x_{i+2}^1,...,x_{i+2}^\ell\in
X_{i+2}$ such that $d(x_{i+1}^j,x_{i+2}^j)<2^{-i-1}$. Repeating
this, we obtain, for each $k$, $x_{i+k}^1,...,x_{i+k}^\ell\in
X_{i+k}$ such that $d(x_{i+k-1}^j,x_{i+k}^j)<2^{-i-k+1}$. For each
$1\le j\le \ell$, it is clear that $\{x_{i+k}^j\}_{k=1}^\infty$ is a
Cauchy sequence, that is, $\{x_{i+k}^j\}_{k=1}^\infty\in X$.
Moreover, for each $1\le k<\infty$, $x_{i+k}^1,...,x_{i+k}^\ell$ is
$\frac{3\epsilon}5$-dense in $X_{i+k}$. This is because for any
$x\in X_{i+k}$, we can choose $x'\in X_i$ such that
$d(x,x')<2^{-i}$, and let $x_i^j\in \{x^j_i\}$ such that
$d(x',x_i^j)<\frac \epsilon 5$. Then $d(x,x_{i+k}^j)\le
d(x,x')+d(x',x_i^j)+d(x_i^j, x_{i+k}^j)<\frac \epsilon 5+2\cdot
2^{-i}<\frac{3\epsilon}5$.

Finally, we check that $\{\{x_{i+k}^1\}_{k=1}^\infty,...,
\{x_{i+k}^\ell\}_{k=1}^\infty\}$ is an $\epsilon$-dense subset in
$X$. Given any $\{y_k\}\in X$, we may assume that for a large $k$, $
d(y_k,y_{k+j})<\frac \epsilon 5$. Since $\{x_k^1,...,x_k^\ell\}$ is
a $\frac {3\epsilon} 5$-net for $X_k$, we may assume that
$d(y_k,x_k^s)<\frac {3\epsilon}5$. Then
$$d(y_{k+j},x^s_{k+j})\le d(y_k,y_{k+j})+d(y_k,x_k^s)+
d(x^s_k,x^s_{k+j})<\frac \epsilon 5+\frac {3\epsilon} 5 +\frac
\epsilon 5=\epsilon,$$ and thus $d(\{y_k\},\{x_k^s\})<\epsilon$.

iv) We shall show that for any $\epsilon>0 $,
$B_\epsilon(X)\supseteq X_i$ and $B_\epsilon(X_i)\supseteq X$ for
all large $i$.

For any $\epsilon>0$, let $2^{-i+1}<\epsilon$. For $x_i\in X_i$,
from the condition that $d_{i,i+j,H}(X_i,X_{i+j})<2^{-i+1}$, we
define a sequence $y_k\in X_k$ such that $d(y_k,y_{k+j})<2^{-k+1}$
and $y_i=x_i$ (we can choose $y_1, ...,y_{i-1}$ arbitrarily).
Clearly, $\{y_k\}$ is a Cauchy sequence and
$d(x_i,\{y_k\})<2^{-i+1}< \epsilon$. This show that $X_i\subseteq
B_\epsilon(X)$ for $i\ge \frac{-\ln \epsilon}{\ln 2}+1$.

For any $\{x_i\}\in X$, for $i$ large, we can assume that
$d(x_i,\{x_j\})<\epsilon$. Note that this does not give
$B_\epsilon(X_i)\supseteq X$, because how large $i$ is may depend on
$\{x_i\}$ in $X$. To overcome this trouble, by iii), we can assume a
finite $\frac \epsilon 4$-dense subset,
$\{y_i^1\}_{i=1}^\infty,...,\{y^\ell_i\}_{i=1}^ \infty$, for $X$.
For each $1\le j\le \ell$, we may assume some $N_j$ such that for
$i\ge N_j$, $d(y_i^j,\{y_i^j\})<\frac \epsilon 4$ and
$2^{-i+1}<\frac \epsilon 4$. Let $N=\max \{N_1,...,N_\ell\}$. For
any $\{x_i\}\in X$, we may assume some $1\le j\le \ell$ such that
$d(\{x_i\},\{y_i^j\}) <\frac \epsilon 4$. From the above, for each
$i\ge N$,
$$ d(y^j_i,\{x_i\})\le d(y^j_i,\{y^j_i\})+d(\{y^j_i\},\{x_i\})<\epsilon,$$
and thus $X\subseteq B_\epsilon(X_i)$. \end{proof}

\vskip2mm

A direct consequence of Proposition \ref{A1} is that   $(
\mathfrak{X},d_{GH})$ is a complete metric space.

A by-product of the above proof is that an abstract convergent
sequence, $X_i\stackrel{d_{GH}}\longrightarrow  X$, can be realized
as a concrete Hausdorff convergence, $d_H(X_i,X)\to 0$, in $\coprod
X_i\coprod X$ with an admissible metric $d$. In particular, it makes
sense to say that $x_i\in X_i, x_i\to x\in X$ because $d(x,x_i)\to
0$.

\vspace{0.7cm}

\section{ Estimates for Volume Forms } $$\text{ by  MARK GROSS}
\footnote{Supported by  NSF Grant  DMS-0805328 and DMS-0854987.
\\ Address: Mathematics Department, University of California San
Disgo, 9500 Gilman Drive, La Jolla, CA 92093-0112,
 USA.
   E-mail address:  mgross@math.ucsd.edu}$$

\vskip2mm

\begin{theorem}\label{B.1}  Let $\pi: \mathcal{M}\rightarrow \Delta$ be a flat
projective family of $n$-dimensional Calabi-Yau varieties,
with $M_t=\pi^{-1}(t)$
non-singular for $t\not=0$ and $M_0=\pi^{-1}(0)$ a variety with
canonical singularities. After embedding the family $\mathcal{M}$ in
$\mathbb{CP}^N\times \Delta$, let $\omega_t$ denote the restriction
of the Fubini-Study metric on $\mathbb{CP}^N$ to $M_t$. Furthermore,
let $\Omega$ be a nowhere vanishing holomorphic section of the
relative canonical bundle $\mathcal{K}_{\mathcal{M}/\Delta}$, and set
$\Omega_t =\Omega|_{M_t}$. Then \begin{itemize}
  \item[i)]There is a $\kappa$ independent of
$t$ such that
\[
(-1)^{\frac{n^{2}}{2}}\Omega_t\wedge\bar\Omega_t>\kappa \omega_t^n.
\]
\item[ii)] There is a constant $\Lambda$ independent of $t$ such that
\[
(-1)^{\frac{n^{2}}{2}} \int_{M_t}\Omega_t\wedge\bar\Omega_t<\Lambda.
\]
\end{itemize}
\end{theorem}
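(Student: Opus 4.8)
The plan is to reduce both inequalities to a comparison near the singular set $S$ of $M_{0}$ between a K\"ahler volume form and the canonical (Gorenstein) volume form, the hypothesis of canonical singularities being essential there. Since $\Delta$ is a disc, $\mathcal{K}_{\Delta}$ is trivial, so $\omega_{\mathcal{M}/\Delta}\cong\mathcal{K}_{\mathcal{M}}$ as line bundles and $\Omega$ corresponds to a nowhere vanishing holomorphic section $\widetilde{\Omega}$ of $\mathcal{K}_{\mathcal{M}}$; over $\mathcal{M}\setminus S$ one has $\widetilde{\Omega}=\Omega\wedge\pi^{*}dt$, and $\Omega_{t}$ is the Poincar\'{e} residue of $\widetilde{\Omega}$ along $M_{t}$. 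Because $M_{0}$ has canonical singularities it is Gorenstein with $\omega_{M_{0}}\cong\mathcal{O}_{M_{0}}$ generated by $\Omega_{0}=\Omega|_{M_{0}}$, and $(-1)^{\frac{n^{2}}{2}}\int_{M_{0}\setminus S}\Omega_{0}\wedge\overline{\Omega}_{0}<\infty$; moreover $\mathcal{M}$ has canonical singularities near $M_{0}$ (it is smooth in the conifold case, and in general this is inversion of adjunction), so for any resolution $\rho\colon\mathcal{N}\to\mathcal{M}$ that is an isomorphism over $\mathcal{M}\setminus S$ the pullback $\rho^{*}\widetilde{\Omega}$ is a \emph{holomorphic} $(n+1)$--form vanishing along each exceptional divisor $E_{i}$ exactly to the order of the discrepancy $c_{i}\ge 0$. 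Fix a K\"ahler form $\widehat{\omega}$ on $\mathbb{CP}^{N}\times\Delta$, e.g. $\omega_{FS}+\sqrt{-1}\partial\overline{\partial}|t|^{2}$, so that $\widehat{\omega}|_{M_{t}}=\omega_{t}$. Away from $S$ everything is elementary: $\pi^{-1}(\overline{\Delta}_{r})\setminus U$ is compact for every neighbourhood $U$ of $S$, and $M_{t}\to M_{0}$ in $C^{\infty}$ as $t\to 0$.

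For (i), a pointwise computation gives, on $\mathcal{M}\setminus S$, the identity $\dfrac{\omega_{t}^{\,n}}{(-1)^{\frac{n^{2}}{2}}\Omega_{t}\wedge\overline{\Omega}_{t}}=c_{n}\,\mathcal{J}^{2}\,\Psi$, where $\mathcal{J}=|d\pi|_{\widehat{\omega}}$ and $\Psi=\widehat{\omega}^{\,n+1}/\big((-1)^{\frac{(n+1)^{2}}{2}}\widetilde{\Omega}\wedge\overline{\widetilde{\Omega}}\big)$ (the identity comes from the fact that wedging a fibrewise volume form with $\pi^{*}(dt\wedge d\overline{t})$ converts it into an absolute $(n{+}1,n{+}1)$--form, together with $\widetilde{\Omega}=\Omega\wedge\pi^{*}dt$). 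Now $\mathcal{J}$ is bounded on $\pi^{-1}(\overline{\Delta}_{r})$, and $\Psi$ is bounded there too: on $\pi^{-1}(\overline{\Delta}_{r})\setminus U$ by compactness, and near $S$ because, pulling back by $\rho$, the denominator of $\rho^{*}\Psi$ vanishes along $E_{i}$ to order $c_{i}$ while the numerator $(\rho^{*}\widehat{\omega})^{n+1}$, the top power of a semipositive form, vanishes there to order at least $c_{i}$ --- this is precisely the domination of a K\"ahler volume form by the canonical volume form on a variety with canonical singularities (trivial when $\mathcal{M}$ is smooth; in general cf. \cite{EGZ}), and is where canonical-ness enters. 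Hence the left side is bounded above by some $C$, which is (i) with $\kappa=C^{-1}$.

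For (ii), put $I_{0}=(-1)^{\frac{n^{2}}{2}}\int_{M_{0}\setminus S}\Omega_{0}\wedge\overline{\Omega}_{0}<\infty$, take a decreasing family of neighbourhoods $U_{\varepsilon}$ of $S$ in $\mathcal{M}$ with $\bigcap_{\varepsilon}U_{\varepsilon}=S$, and split the integral over $M_{t}$ into the parts over $M_{t}\setminus U_{\varepsilon}$ and $M_{t}\cap U_{\varepsilon}$. The first converges, as $t\to 0$, to $(-1)^{\frac{n^{2}}{2}}\int_{M_{0}\setminus U_{\varepsilon}}\Omega_{0}\wedge\overline{\Omega}_{0}\le I_{0}$ and is in particular bounded. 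For the second, resolve $\mathcal{M}$ near $S$ so that the exceptional locus together with the central fibre is a normal crossings divisor and estimate $\int_{\mathcal{N}_{t}\cap V}(-1)^{\frac{n^{2}}{2}}(\rho^{*}\Omega)_{t}\wedge\overline{(\rho^{*}\Omega)_{t}}$ in local monomial coordinates adapted to $\rho\circ\pi$: because $\rho^{*}\Omega$ vanishes along the exceptional divisors to the non-negative discrepancy orders, each such integral is bounded uniformly in $t$ and tends to $0$ as $V$ shrinks to $\rho^{-1}(S)$ (for a degeneration with non-canonical special fibre --- e.g. to a nodal curve --- these integrals instead grow like $-\log|t|$). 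Therefore $(-1)^{\frac{n^{2}}{2}}\int_{M_{t}}\Omega_{t}\wedge\overline{\Omega}_{t}\le I_{0}+o_{\varepsilon}(1)$, and fixing $\varepsilon$ small gives (ii).

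The main obstacle is this uniform local estimate near $S$ in both parts. It does not follow merely from the (easy) fact that $t\mapsto(-1)^{\frac{n^{2}}{2}}\int_{M_{t}}\Omega_{t}\wedge\overline{\Omega}_{t}$ is locally integrable on $\Delta$ --- which is immediate from Fubini and the finiteness of $(-1)^{\frac{(n+1)^{2}}{2}}\int_{\pi^{-1}(\overline{\Delta}_{r})}\widetilde{\Omega}\wedge\overline{\widetilde{\Omega}}$ --- since local integrability alone would still permit growth like $|t|^{-\beta}$ with $0<\beta<2$; excluding this needs the explicit normal crossings computation, equivalently that the limiting Hodge filtration of the degeneration lies in the weight-zero part of the limit mixed Hodge structure, which is exactly the canonical-singularities hypothesis.
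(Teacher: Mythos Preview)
For part (i) your route via the total-space identity $\omega_t^n/\bigl((-1)^{n^2/2}\Omega_t\wedge\bar\Omega_t\bigr)=c_n\,|d\pi|_{\hat\omega}^{2}\,\Psi$ is correct, but more elaborate than what the paper does. The paper simply writes each ambient coordinate $n$-form $dz_I$ as $f_I\Omega$ on $\mathcal{M}^{sm}$, extends each $f_I$ holomorphically across the singular set by Hartogs (using only that $\mathcal{M}$ is normal and $\mathcal{M}\setminus\mathcal{M}^{sm}$ has codimension $\ge 2$), and gets $\omega_t^n=C\bigl(\sum_I|f_I|^2\bigr)\Omega_t\wedge\bar\Omega_t$ with $\sum|f_I|^2$ locally bounded. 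No inversion of adjunction or resolution is needed.

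For part (ii) there is a genuine gap. After passing to a model with normal-crossings central fibre, say $t=z_1\cdots z_p$ locally (semistable), the natural frame for $\Omega^n_{\tilde{\mathcal M}/\Delta}(\log\tilde M_0)$ is $\theta=\tfrac{dz_2}{z_2}\wedge\cdots\wedge\tfrac{dz_p}{z_p}\wedge dz_{p+1}\wedge\cdots$, and writing $\eta^*\Omega=g\,\theta$ the local fibre integral is
\[
\int_{M_t\cap U}|g|^2\,\theta\wedge\bar\theta \;\asymp\; \int_{\{\prod_{i\ge 2}|z_i|>|t|\}} |g|^2 \prod_{i=2}^{p}|z_i|^{-2}\,d\mu .
\]
Your computation gives $g=h\prod_{i\ \mathrm{exc}} z_i^{c_i}$ with $c_i\ge 0$ the $\mathcal{M}$-discrepancies; but if some exceptional $c_i=0$ (a crepant divisor for $\mathcal{M}$, which canonical singularities do not rule out) the integral still blows up like $(-\log|t|)^{p-1}$, exactly the behaviour you flag for the nodal curve. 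What is actually needed is vanishing of $g$ to order $\ge 1$ along \emph{every} exceptional component, i.e.\ that the section of $\Omega^n_{\tilde{\mathcal M}/\Delta}(\log\tilde M_0)$ determined by $\Omega$ restricts to zero on each $E_i$. This comes not from the $\mathcal M$-discrepancies but from the canonical singularities of $M_0$: on the proper transform $X_0$ one has $\Omega^n_{X_0}(\log\partial X_0)=\mathcal K_{X_0}+\partial X_0$, so the $M_0$-discrepancies $a_E\ge 0$ force vanishing to order $a_E+1\ge 1$ along $\partial X_0$, and hence the section extends by zero across the exceptional components. The paper proves precisely this (the isomorphism labelled \eqref{linebundlemap}), but then finishes via Hodge theory --- the vanishing gives $N\Omega_{\lim}=0$ and the nilpotent orbit theorem yields the bound. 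Once the order-$\ge 1$ vanishing is in hand your direct monomial estimate does go through and gives a more hands-on alternative to the nilpotent-orbit step, but as written your argument supplies only $c_i\ge 0$, which is not enough.
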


\proof For i), we use an argument similar to that in \cite{EGZ},
Lemma 6.4. Let $\mathcal{M}^{sm}$ denote the set of points of
$\mathcal{M}$ where $\pi$ is smooth, i.e., the set of points where
$\mathcal{M}$ is non-singular and $\pi_*$ is surjective. Let
$p\in\mathcal{M}$ be a point, and consider an open neighbourhood
$U_p$ of $p$ which embeds into $\mathbb{C}^{N+1}$ via
$\iota:U_p\rightarrow \mathbb{C}^{N+1}$, with coordinates
$t,z_1,\ldots,z_N$. The Fubini-Study form is comparable to
$\omega=\sqrt{-1}\sum\limits_{i=1}^N dz_i\wedge d\bar z_i$, so we
can assume that locally $\omega_t$ is the restriction of $\omega$ to
$U_p\cap M_t$. Now $\omega^n=n!(-1)^{\frac{n }{2}}\sum\limits_I
dz_I\wedge d\bar z_I$, where the sum is over all index sets
$I\subseteq\{1,\ldots,N\}$ with $\#I=n$. Now as $\iota^*(dz_I)$ is a
relative holomorphic $n$-form on $U_p\cap \mathcal{M}^{sm}$, there
is a holomorphic function $f_I$ on $U_p\cap \mathcal{M}^{sm}$ such
that $\iota^*(dz_I)=f_I\Omega$. Note that since $\mathcal{M}$ is
necessarily normal  and $\mathcal{M}\setminus\mathcal{M}^{sm}$ is
codimension $\ge 2$, we can apply Hartog's theorem for normal
analytic spaces to extend $f_I$ to a holomorphic function on $U_p$.
Thus
\[
\iota^*\omega^n=C (-1)^{\frac{n^{2} }{2}} \left(\sum_I
|f_I|^2\right)\Omega\wedge\bar\Omega.
\]
On an open neighbourhood $V_p\subset\subset U_p$ of $p$, $|f_I|$ is
bounded. This gives the desired result.

For ii), we need to apply some standard results from Hodge theory.
After making a base-change $\Delta\rightarrow\Delta$ given by
$t\mapsto t^k$ for some $k$, we can assume that the monodromy
operator $T$ about the origin acting on $H^n(M_{t_0},\mathbb{C})$ is
unipotent, i.e., $(T-I)^m=0$ for some $m$. Here
$t_0\in\Delta^*=\Delta\setminus\{0\}$ is a fixed basepoint. Let
\[
N=\log(T-I);
\]
this makes sense via the power series expansion. By the stable
reduction theorem \cite{KKMS}, one has a diagram
\[
\xymatrix@C=30pt {
\widetilde{\mathcal{M}} \ar[r]^{\eta}\ar[rd]_{\tilde{\pi}}& \mathcal{M} \ar[d]^\pi\\
&\Delta }
\]
in which $\eta$ is an isomorphism outside the central fibre and
$\tilde{\pi}$ is normal crossings, i.e., locally around points of
$\widetilde{M}_0=\tilde{\pi}^{-1}(0)$ there are coordinates
$z_1,\ldots,z_{n+1}$ on $\widetilde{\mathcal{M}}$ such that
$t=z_1\cdots z_p$ for some $p\le n+1$. One has the sheaf
$\Omega^1_{\widetilde{\mathcal{M}}}(\log \widetilde M_0)$ of
logarithmic differentials on $\widetilde{\mathcal{M}}$ locally
generated by ${dz_1\over z_1},\ldots,{dz_p\over
z_p},dz_{p+1},\ldots, dz_{n+1}$, and the sheaf of relative
logarithmic diffentials
$\Omega^1_{\widetilde{\mathcal{M}}/\Delta}(\log\widetilde{M}_0)$ is
obtained by dividing out by the relation ${dt\over t}=0$. It is
standard (see for example the book \cite{PS} for the full background
used here) that
$\Omega^1_{\widetilde{\mathcal{M}}/\Delta}(\log\widetilde{M}_0)$ is
a rank $n$ vector bundle, and if $X$ is an irreducible component of
$\widetilde{M}_0$, then
$\Omega^1_{\widetilde{\mathcal{M}}/\Delta}(\log\widetilde{M}_0)|_X=\Omega^1_X(\log\partial
X)$, where $\partial X=X\cap \widetilde{S}$, and $\widetilde{S}$ is
the singular set of $\widetilde{M}_0$. One then obtains the
logarithmic de Rham complex
$\Omega^{\bullet}_{\widetilde{\mathcal{M}}/\Delta}(\log\widetilde{M}_0)$,
with
$\Omega^p_{\widetilde{\mathcal{M}}/\Delta}(\log\widetilde{M}_0)$ the
$p$-th exterior power of the sheaf of relative log differentials,
and the differential $d$ is the ordinary exterior derivative. In
particular, we have the line bundle
$\Omega^n_{\widetilde{\mathcal{M}}/\Delta}(\log\widetilde{M}_0)$.

By \cite{Steenbrink}, Theorem 2.11,
$\tilde{\pi}_*\Omega^n_{\widetilde{\mathcal{M}}/\Delta}(\log\widetilde{M}_0)$
is a vector bundle whose fibre over $t\not=0$ is
$H^0(M_t,\mathcal{K}_{M_t})$. Hence this is a line bundle. On the
other hand, by assumption on $\mathcal{M}$,
$\mathcal{K}_{\mathcal{M}/\Delta}\cong\mathcal{O}_\mathcal{M}$ and
so $\pi_*\mathcal{K}_{\mathcal{M}/\Delta}$ is also a line bundle.
Let $\mathcal{M}^o$ be the largest open set so that
$\eta^{-1}(\mathcal{M}^o)\rightarrow\mathcal{M}^o$ is an
isomorphism, and let $i:\mathcal{M}^o\rightarrow\mathcal{M}$ be the
inclusion. Then the codimension of
$\mathcal{M}\setminus\mathcal{M}^o$ in $\mathcal{M}$ is at least
two. Since $\mathcal{M}\setminus\mathcal{M}^{sm}$ has codimension at
least two, $\mathcal{M}\setminus
(\mathcal{M}^{sm}\cap\mathcal{M}^o)$ has codimension at least two.
We have a composition of canonical sheaf homomorphisms
\[
\eta_*\Omega^n_{\widetilde{\mathcal{M}}/\Delta}(\log\widetilde{M}_0)\rightarrow
i_*i^*\eta_*\Omega^n_{\widetilde{\mathcal{M}}/\Delta}(\log\widetilde{M}_0)\cong
\mathcal{K}_{\mathcal{M}/\Delta},
\]
the latter isomorphism by Hartog's theorem and the fact that the
isomorphism holds over $\mathcal{M}^{sm}\cap\mathcal{M}^o$. Applying
$\pi_*$ then gives a map
\begin{equation}
\label{linebundlemap} \tilde{\pi}_*
\Omega^n_{\widetilde{\mathcal{M}}/\Delta}(\log\widetilde{M}_0)\rightarrow
\pi_*\mathcal{K}_{\mathcal{M}/\Delta}.
\end{equation}
This map is an isomorphism over $\Delta^*$, and hence is necessarily
an inclusion of sheaves. To show it is in fact an isomorphism, we
need to show that any section of
$\mathcal{K}_{\mathcal{M}/\Delta}|_{M_0}=\mathcal{K}_{M_0}$ comes
from a section of
$\Omega^n_{\widetilde{\mathcal{M}}/\Delta}(\log\widetilde{M}_0)|_{\widetilde{M}_0}$.
To see this, let $X_0$ be the proper transform of $M_0$ in
$\widetilde{M}_0$. Then $\eta_0:X_0\rightarrow M_0$ is a resolution
of singularities, and since $M_0$ has canonical singularities, we
have
\[
\mathcal{K}_{X_0}=\eta_0^*\mathcal{K}_{M_0}+\sum_E a_EE,
\]
where the sum is over all exceptional divisors $E$ of $\eta_0$ and
$a_E\ge 0$. (Note $a_E$ is an integer since $M_0$ is Gorenstein). On
the other hand, $\Omega^n_{X_0}(\log \partial X_0)$ is
$\mathcal{K}_{X_0}+\sum\limits_E E$, where the sum is again over all
exceptional divisors of $\eta_0$. So $\eta_0^*\Omega_0$, viewed as a
section of $\Omega^n_{X_0}(\log\partial X_0)$, has a zero of order
at least $1$ along each exceptional divisor $E$. Thus
$\eta_0^*\Omega_0$ extends by zero to a section of
$\Omega^n_{\widetilde{\mathcal{M}}/\Delta}(\log\widetilde{M}_0)|_{\widetilde
{M}_0}$. Thus \eqref{linebundlemap} is surjective, hence an
isomorphism.

We now recall some standard material concerning the limiting mixed
Hodge structure and the nilpotent orbit theorem. Denote by
$\mathcal{H}^n$ the vector bundle $\mathbb{R}^n\tilde{
\pi}_*\Omega^{\bullet}_{\widetilde{\mathcal{M}}/\Delta}
(\log\widetilde{M}_0)$. The fibre of this bundle at $t$ is
isomorphic to $H^n(M_t,\mathbb{C})$. This bundle comes along with
the Gauss-Manin connection, which is flat with a regular singular
point at $0\in\Delta$.

Let $j:H\rightarrow \Delta^*$ be the universal cover, with $H$ the
upper half-plane, with coordinate $w={1\over 2\pi \sqrt{-1}}\log t$.
Then $j^*\mathcal{H}^n$ is now canonically identified with the
trivial bundle $H\times H^n(M_{t_0},\mathbb{C})$ via parallel
transport by the Gauss-Manin connection. If $e\in
H^n(M_{t_0},\mathbb{C})$, one obtains a constant section $\sigma_e$
of $j^*\mathcal{H}^n$ by parallel transport, and then
$e^{-wN}\sigma_e$ descends to a single-valued section of
$\mathcal{H}^n$ over $\Delta^*$. The bundle $\mathcal{H}^n$ is then
the canonical extension of $\mathcal{H}^n|_{\Delta^*}$, i.e., the
extension in which, for a basis $e_1,\ldots,e_s$ of
$H^n(M_{t_0},\mathbb{C})$, $e^{-wN}\sigma_{e_1},\ldots
e^{-wN}\sigma_{e_n}$ form a holomorphic frame. In particular, there
is an isomorphism of the fibre
$\mathcal{H}^n_0=\mathbb{H}^n(\widetilde{M}_0,
\Omega^{\bullet}_{\widetilde{\mathcal{M}}/\Delta}(\log\widetilde{M}_0)|_{\widetilde{M}_0})$
with $H^n(M_{t_0},\mathbb{C})$, isomorphic to the space of flat
sections of $j^*\mathcal{H}^n$.

We also have an inclusion
\[
\mathcal{F}^n:=\tilde
\pi_*\Omega^n_{\widetilde{\mathcal{M}}/\Delta}(\log\widetilde{M}_0)
\hookrightarrow \mathcal{H}^n.
\]
The fibre of $\mathcal{F}^n$ over $0\in\Delta$ is
$\mathcal{F}^n_{\lim}\subseteq H^n(M_{t_0},\mathbb{C})$ under the
above isomorphism, a piece of the limiting mixed Hodge structure. In
particular, the value of the holomorphic section $\Omega$ of
$\pi_*\mathcal{K}_{\mathcal{M}/\Delta}$ at $0$ under the isomorphism
\eqref{linebundlemap} defines a class $\Omega_{\lim}\in
\mathcal{F}^n_{\lim}$.

We now apply the nilpotent orbit theorem (see e.g.,
\cite{Griffiths}, Chapter IV, for an exposition of this material).
Let $\phi:H \rightarrow \mathbb{P}(H^n(M_{t_0},\mathbb{C}))$ be the
period map, with, for $w\in H$, $\phi(w)$ being the one-dimensional
subspace $(j^*\mathcal{F}^n)_w \subseteq (j^*\mathcal{H}^n)_w\cong
H^n(M_{t_0},\mathbb{C})$, the latter identification via the
Gauss-Manin connection. Then $e^{-wN}\phi:H\rightarrow
\mathbb{P}(H^n(M_{t_0},\mathbb{C}))$ descends to a map
$\psi:\Delta^* \rightarrow \mathbb{P}(H^n(M_{t_0},\mathbb{C}))$
which in turn extends across the origin, with
$\psi(0)=[\Omega_{\lim}]$. The nilpotent orbit is the map
$\phi^{nil}:H\rightarrow \mathbb{P}(H^n(M_{t_0},\mathbb{C}))$ given
by
\[
\phi^{nil}(w)=e^{wN}\psi(0)=e^{wN}[\Omega_{\lim}].
\]
The nilpotent orbit theorem states that $\phi^{nil}$ is a good
approximation to $\phi$, i.e., with a suitable metric on
$\mathbb{P}(H^n(M_{t_0},\mathbb{C}))$ inducing a distance function
$\rho$, we have constants $A$ and $B$ such that for ${\rm Im} w\ge
A>0$,
\[
\rho(\phi(w),\phi^{nil}(w))\le ({\rm Im}  w)^B e^{-2\pi{\rm Im} w}.
\]
This implies that $\int_{M_t}\Omega_t\wedge\bar\Omega_t$ is bounded
independently of $t$ near $0$ provided that $\int_{M_{t_0}}
e^{wN}\Omega_{\lim}\wedge \overline{e^{wN}\Omega_{\lim}}$ is bounded
for ${\rm Im}  w\ge A$.

Now we apply the argument of Proposition 2.3 and Theorem 2.1 of
\cite{Wang}. The argument of Proposition 2.3 tells us that
$\widetilde{M}_0$ has an irreducible component (in fact $X_0$) with
$H^{n,0}(X_0,\mathbb{C})\not=0$. Thus, by the first line of the
proof of Theorem 2.1, $N\mathcal{F}^n_{\infty}=0$. So in particular,
$e^{wN}\Omega_{\lim}=\Omega_{\lim}$, giving the desired boundedness.
\qed

\vspace{0.7cm}

\end{document}